\numberwithin{equation}{section}
\newtheorem{Theorem}{Theorem}[section]
\newtheorem{Remark}{Remark}[section]
\newtheorem{Scheme}{Scheme}[section]
\newcounter{RomanNumber}
\newcommand{\MyRoman}[1]{\setcounter{RomanNumber}{#1}\Roman{RomanNumber}}
\renewcommand{\section}{\@startsection{section}{1}{0mm}
  {-\baselineskip}{0.5\baselineskip}{\bf\leftline}}
\renewcommand{\subsection}{\@startsection{subsection}{1}{0mm}
  {-\baselineskip}{0.5\baselineskip}{\leftline}}
\title{\large Arbitrarily high-order structure-preserving schemes for the Gross-Pitaevskii equation with angular momentum rotation in three dimensions}
\author{
\small Jin Cui$^{1}$, Yushun Wang$^2$  and Chaolong Jiang$^{3}$\thanks{Corresponding author. E-mail: chaolong\_jiang@126.com.}\\
\small $^1$Department of Basic Sciences, Nanjing Vocational College of Information Technology,\\
\small Nanjing 210023, China \\
\small  $^2$Jiangsu Key Laboratory for Numerical Simulation of Large Scale Complex Systems, \\
\small  School of Mathematical Sciences, Nanjing Normal University, Nanjing 210023, China\\
\small $^3$School of Statistics and Mathematics,  Yunnan University of Finance and Economics, \\
\small Kunming 650221, China
}
\begin{document}

\date{}
\maketitle
\titleformat*{\section}{\centering}{\Large}
\titleformat*{\subsection}{\centering}{\large}

\begin{center}{Abstract} \end{center}

\indent In this paper, we design a novel class of arbitrarily high-order structure-preserving numerical schemes for the time-dependent Gross-Pitaevskii equation with angular momentum rotation in three dimensions. Based on the idea of the scalar auxiliary variable approach which is proposed in the recent papers [J.
Comput. Phys.,  416 (2018) 353-407 and SIAM Rev., 61(2019) 474-506] for developing energy stable schemes for gradient flow systems, we firstly reformulate the Gross-Pitaevskii equation into an equivalent system with a modified energy conservation law. The reformulated system is then discretized  by the Gauss collocation method in time and the standard Fourier pseudo-spectral method in space, respectively. We show that the proposed schemes can preserve the discrete mass and modified energy exactly. Numerical results are addressed to verify the efficiency and high-order accuracy of the proposed schemes.\\
\\
\noindent \ \textbf{AMS subject classifications:} 65M20, 65M10, 65M70 \\
\noindent \textbf{Key words:} Gross-Pitaevskii equation, scalar auxiliary variable, high-order, structure-preserving scheme. \\

\section{Introduction}

The formation and dynamics of Bose-Einstein condensate (BEC) are usually modeled by the Gross-Pitaevskii (GP) equation which is essentially a Schr$\ddot{\rm o}$dinger equation involving an additional nonlinear term related to particle-particle interactions~\cite{Lieb01,pita61}. To characterize a rotating BEC, it is common to generalize the model by an angular momentum term. Let $[0,T] \subset \mathbb{R}$ be a time interval and $\mathbb R^d \ ( d=2,3)$ be spatial domain. We consider the dimensionless time-dependent GP equation, and seek the complex-valued wave function $\psi:  \mathbb R^d  \times [0,T] \to \mathbb C$ which characterizes the quantum state of the rotating condensate. The targeted rotational GP equation can be written as follows
\begin{align*}
 \text{{i}} \partial_t\psi= -\frac{1}{2} \Delta \psi + V \psi+ \text{i} {\bf \Omega} \cdot ({\bf x} \times \nabla) \psi+\beta |\psi|^2  \psi, \ \ {\bf x} \in \mathbb R^d  , \ t>0.
\end{align*}
Here $\text{i}=\sqrt{-1}$, $t$ is time variable, and ${\bf x}=(x,y)^{\top}\in \mathbb{R}^2$ or $(x,y,z)^{\top} \in \mathbb{R}^3$ is the Cartesian coordinate vector. Note that $V({\bf x})$ is a real-valued function with respect to the external trap potential and it is experimentally chosen as a harmonic potential, i.e. a quadratic polynomial. The nonlinear term $\beta|\psi|^2\psi$ describes the species of the bosons and how they interact (negative for attractive interaction and positive for repulsive interaction) between particles in experiments. In particular, $\beta$ depends on the number of bosons, their individual mass, and scattering length. Moreover, the term $\text{i}{\bf \Omega} \cdot ({\bf x} \times \nabla) \psi$ means the angular rotation of the condensate, while ${\bf \Omega} \in \mathbb R^3$ characterizes the angular speed of the laser beam. In general, the operator ${\bf L}=(L_x,L_y,L_z):=-\text{i}({\bf x} \times \nabla)={\bf x}\times {\bf P}$ denotes the angular momentum, where ${\bf P}=-\text{i}\nabla$ means the momentum operator. For brevity, we assume that the rotation is around the $z$-axis, which can deduce the simplification $\text{i} {\bf \Omega} \cdot ({\bf x} \times \nabla)=-\Omega L_z$, where $L_z=- { {\text{i}}} (x \partial_y -y \partial_x)$ is the $z$-component of the angular momentum.

\indent In this paper, we focus numerically on the following three-dimensional case:
\begin{align}\label{eq1.1}
\text{i} \partial_t\psi=\left[ -\frac{1}{2} \Delta + V(x,y,z)-\Omega L_z+\beta |\psi|^2 \right] \psi, \ \ (x,y,z) \in  \mathcal{D}, \ \ 0<t \leqslant T,
\end{align}\label{eq1.2}
subject to the $(l_x,l_y,l_z)$-periodic boundary conditions
\begin{align}\label{eq1.2}
&\psi(x,y,z,t)=\psi(x+l_x,y,z,t), \ \ \psi(x,y,z,t)=\psi(x,y+l_y,z,t),  \nonumber \\
&\psi(x,y,z,t)=\psi(x,y,z+l_z,t), \ \ (x,y,z) \in  \mathcal{D},\  \ 0<t\leqslant T,
\end{align}
and the initial condition
\begin{align}
\psi(x,y,z,0)=\psi_0(x,y,z), \ \ (x,y,z) \in  \mathcal {D}, \label{eq1.3}
\end{align}
where $ \mathcal{D}=[0,l_x]\!\times\![0,l_y]\!\times\![0,l_z]$, and $\psi_0(x,y,z)$ is a given ($l_x,l_y,l_z$)-periodic complex-valued function.

 In fact, one can easily verify that the initial-boundary value problem (1.1)-(1.3) preserves the following mass and energy conservation laws

 \begin{align}\label{mass-conservation-law}
 M(\psi(\cdot,t)):=\int_{\mathcal{D}}^{}|\psi(\cdot,t)|^2 d{\bf x} \equiv M(\psi_0), \ \ t\geqslant  0,
\end{align}
 and
 \begin{align}\label{energy-conservation-law}
 E(\psi(\cdot,t)):=\int_{\mathcal{D}}^{} \left[ \frac{1}{2}|\nabla \psi|^2+V({\bf x})|\psi|^2- \Omega \bar \psi  L_z\psi+\frac{\beta}{2} |\psi|^4 \right ] d{\bf x} \equiv E(\psi_0),              \ \ t\geqslant  0,
 \end{align}
 where $|\nabla \psi|^2 = |\psi_x|^2+|\psi_y|^2+|\psi_z|^2, $ and $\bar \psi$  refers to the conjugate of $\psi$.

\indent In the last decades, the model \eqref{eq1.1} has been studied a lot in both theoretical analyses and numerical simulations.
For the derivation, well-posedness and dynamical properties, readers are referred to~\cite{hao,lieb,pita03}.
The existing numerical methods for the GP equation include the finite difference methods~\cite{baoc}, finite element method~\cite{hen}, spectral collocation method~\cite{chenhs}, split-step and implicit corrected parallel SPH method~\cite{Jiangt}, time-splitting  generalized-Laguerre-Fourier-Hermite pseudo-spectral method~\cite{baohs}, Gauss exponential Runge-Kutta (ERK) and exponential integrators (Lawson) methods~\cite{Besse15}, etc. A comparative overview on different numerical methods for solving the nonlinear Schr$\ddot{\rm o}$dinger/GP equations can be found in~\cite{antoine13} and the references therein. However, to our best knowledge, there are few references mentioned above considering  energy-preserving schemes for the GP equation \eqref{eq1.1}.

It is well-known that the energy conservation law plays an important role in the study of solutions of mechanical systems (e.g., see \cite{Ben}), and whether or not can preserve the energy of the original systems is a criterion to judge the success of a numerical method for their solutions. In Ref. \cite{baoc}, Bao and Cai developed a Crank-Nicolson finite difference scheme, which preserve the discrete mass and energy exactly, for the rotational GP equation. In Ref. \cite{wangg19}, Wang et al. showed that the classical Crank-Nicolson-type schemes can preserve a modified mass and energy conservation law by introducing an energy function of the grid functions using  recursive relations. More recently, Cui et al.~\cite{cuijincpc} developed an energy-preserving linearly-implicit Fourier pseudo-spectral scheme for the GP equation.  It is noteworthy that Bao et al.~\cite{bao06} presented an efficient time-splitting method, which preserves the discrete energy for the non-rotational case. Unfortunately, most existing energy-preserving works are at most second-order accuracy in time. In general, the GP equation usually requires longtime computation to obtain a condensate ground state for given iteration criteria, thus apart from the energy conservation law, higher-order accurate schemes are always highly desired, which makes large marching steps practical while preserving the accuracy.
Until now, the literature on developing higher-order schemes for the rotational GP equation is rather limited. Although the Gauss ERK and Lawson methods~\cite{Besse15} have been proposed to achieve such goal, both schemes failed to handle the energy conservation property.

 Over the past decade, there have been many attempts to develop high-order energy-preserving methods for solving conservative systems.
In~\cite{QM08}, Quispel and  McLaren proposed third- and fourth-order averaged vector flied (AVF) methods. Further analyses for the sixth-order AVF method can be found in~\cite{LWQ14}. Subsequently, based on the discrete line integral methods,  Brugnano et al. developed a series of excellent high-order energy-preserving methods, named Hamiltonian boundary value methods (HBVMs) (e.g., see~\cite{Brug,BI16,Brug18}),
for the Hamiltonian system with a polynomial energy function.
In~\cite{H10}, Hairer further introduced a variant of collocation methods, which
can remove the limit of the HBVMs to cover the non-polynomial case. The selected high-order methods can be easily extended to propose high-order energy-preserving schemes for the GP equation \eqref{eq1.1}, which however cannot preserve the energy and mass simultaneously (see Refs. \cite{Brug18,GCW14b,LW15} for the classical Schr\"odinger equation).
In Ref. \cite{JWG19}, Jiang et al. proposed a new high-order energy-preserving method, based on the invariant energy quadratization approach~\cite{GZYW18,YZW17,ZWY16}, for the Camassa-Holm equation. More recently, this idea has been extended to solve nonlinear Schr\" odinger equations in one and two dimensions~\cite{Liuzy}. The proposed schemes can preserve both the energy and the mass, but it is challenging for diagonally implicit Runge-Kutta methods to achieve arbitrary high-order accuracy.


In this paper, we aim to develop a class of arbitrarily high-order schemes for numerically solving the GP system~\eqref{eq1.1}, which can preserve both the mass and energy.
Instead of traditional ideas in which ones design special numerical schemes directly or extend energy-preserving schemes from low-order accuracy to be high-order, we first recast the energy conservation law by introducing a new auxiliary variable and then reformulate the original system into an equivalent system, which is inspired by the idea of the scalar auxiliary variable (SAV) approach. Some classical high-order methods are finally applied to achieve the goal.
Specifically, for the GP equation~\eqref{eq1.1}, we firstly reformulate it into an equivalent system, which conserves the original mass and a modified energy, by introducing an scalar auxiliary variable. The classical Gauss collocation methods are then employed to discretize the reformulated system in time. It is shown the resulting schemes can preserve both the mass and modified energy exactly. Different from~\cite{Liuzy}, the proposed schemes can reach arbitrarily high-order in time and the introduced auxiliary variable is a scalar not a vector in the discrete level, which implies that our methods are more efficient. Moreover, a fast solver is designed for numerical implementations, which can be directly extended to solve the existing schemes~\cite{JWG19,Liuzy} efficiently. Through a set of numerical simulations, we demonstrate the high accuracy and invariants-preserving of the proposed schemes thereafter.


The remainder of this paper is arranged as follows. In Section 2, based on the SAV approach, the GP equation \eqref{eq1.1} is reformulated into an equivalent form. In Section 3, we derive a class of high-order semi-discreted schemes in time by using the Gaussian collocation method, which are proven to be energy-preserving and mass-preserving. In Section 4, the Fourier pseudo-spectral method is further applied for spatial discretization to obtain a fully discrete structure-preserving scheme. A fast solver is designed to implement the proposed schemes efficiently in Section 5. In Section 6, we present several numerical examples. Finally, some concluding remarks are drawn in Section 7.

\section{Model reformulation using the SAV approach}
In this section, we utilize the SAV idea to transform the equation  \eqref{eq1.1} into an equivalent system, which possesses  a modified energy function of the new variable. The reformulated system provides an elegant platform for developing high-order structure-preserving schemes. We define the $L^2$ inner product and its norm as $(f,g)=\int_{\mathcal{D}}^{}fgd{\bf x}$ and $\Vert f \Vert=\sqrt{(f,f)}, \ \forall f,g \in L^2(\mathcal D)$, respectively, and denote the linear part of \eqref{eq1.1} as ${\mathcal L}\psi=-\frac{1}{2} \Delta \psi + V \psi-\Omega L_z \psi$ for simplicity.

The system \eqref{eq1.1} can be rewritten as
\begin{align}\label{2.1}
 \partial_t\psi=-\text{i}\frac{\delta \mathcal{H} }{\delta {\bar \psi}},
\end{align}
where
\begin{align}\label{2.2}
 \mathcal{H}=({\mathcal L}\psi,  \psi)+\frac{\beta}{2} (\psi^2, \psi^2 ),
\end{align}
and $\frac{\delta \mathcal{H} }{\delta {\bar \psi}}$ denotes the variational derivative of $\mathcal H$ with respect to $\bar \psi$
\begin{align*}
\frac{\delta \mathcal{H} }{\delta {\bar \psi}}={\mathcal L}\psi+\beta |\psi|^2  \psi.
\end{align*}
Then, by introducing a scalar auxiliary variable
\begin{align*}
 q:=q(t)=\sqrt{(\psi^2,\psi^2)+C_0},
\end{align*}
the energy conservation law \eqref{energy-conservation-law} can be rewritten as
\begin{align}\label{modified-energy-conservation-law}
 E(\psi(\cdot,t)):=( {\mathcal L}\psi, \psi)+\frac{\beta}{2}q^2-\frac{\beta}{2}C_0,
 \end{align}
where $C_0$ is a constant large enough to make $q$ well-defined for all $\psi$. According to the energy variational, we further reformulate the form \eqref{2.1} to the following equivalent system
\begin{align}\label{eq2.5}
\left\{
\begin{aligned}
&\partial_t \psi=-\text{i}\bigg ( {\mathcal L}\psi+ \frac{\beta|\psi|^2\psi q}{\sqrt{(\psi^2,\psi^2)+C_0}} \bigg), \\
&\frac{d}{dt} q=\frac{(\partial_t \psi,|\psi|^2\psi)+(|\psi|^2\psi,\partial_t \psi)}{\sqrt{(\psi^2,\psi^2)+C_0}},
\end{aligned}
\right.
\end{align}
with the consistent initial condition
\begin{align}\label{eq2.6}
\psi({\bf x},0)=\psi_0({\bf x} ),  \  q(0)=\sqrt{\big(  \psi^2_0 ({\bf x} ), \psi^2_0({\bf x} )) +C_0   },
\end{align}
and the periodic boundary condition \eqref{eq1.2}.
\begin{Theorem} The reformulated system \eqref{eq2.5} possesses the modified energy conservation law \eqref{modified-energy-conservation-law} and the mass conservation law \eqref{mass-conservation-law}, respectively.

\end{Theorem}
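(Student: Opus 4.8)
The plan is to verify both invariants directly, by differentiating them along solutions of \eqref{eq2.5} and substituting the evolution equations, exploiting two structural features: the factor $-\text{i}$ in the first equation (which renders the flow norm-preserving), and the fact that the auxiliary equation for $q$ is engineered to be exactly the chain rule for the quadratized nonlinearity. Throughout I abbreviate $S=\sqrt{(\psi^2,\psi^2)+C_0}$ and $w=\mathcal{L}\psi+\frac{\beta q}{S}|\psi|^2\psi$, so that the first line of \eqref{eq2.5} reads $\partial_t\psi=-\text{i}w$. I will use repeatedly the Hermitian identities $\overline{(f,g)}=(g,f)$ and $(f,g)+(g,f)=2\,\text{Re}(f,g)$; note in particular that the numerator of the $q$-equation is precisely $2\,\text{Re}(\partial_t\psi,|\psi|^2\psi)$, and that $q$ remains real since both its initial value and its time derivative are real.

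For the mass I would compute $\frac{d}{dt}M=\frac{d}{dt}(\psi,\psi)=2\,\text{Re}(\partial_t\psi,\psi)=2\,\text{Re}\big(-\text{i}(w,\psi)\big)$. It then suffices to show that $(w,\psi)$ is real: this reduces to checking that $(\mathcal{L}\psi,\psi)$ is real and that $(|\psi|^2\psi,\psi)=\int_{\mathcal D}|\psi|^4\,d{\bf x}$ is real, the scalar $\beta q/S$ being real. Since $\text{Re}(-\text{i}r)=0$ for any real $r$, mass conservation follows at once.

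For the energy I would differentiate \eqref{modified-energy-conservation-law}. Using the self-adjointness and time-independence of $\mathcal{L}$, the quadratic part yields $\frac{d}{dt}(\mathcal{L}\psi,\psi)=2\,\text{Re}(\partial_t\psi,\mathcal{L}\psi)$. The auxiliary part yields $\frac{d}{dt}\big(\frac{\beta}{2}q^2\big)=\beta q\,\frac{dq}{dt}=\frac{\beta q}{S}\cdot 2\,\text{Re}(\partial_t\psi,|\psi|^2\psi)=2\,\text{Re}\big(\partial_t\psi,\frac{\beta q}{S}|\psi|^2\psi\big)$, where the real scalar $\beta q/S$ has been absorbed into the second slot. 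Adding the two contributions collapses them into $\frac{d}{dt}E=2\,\text{Re}(\partial_t\psi,w)=2\,\text{Re}(-\text{i}\|w\|^2)=0$. The decisive point — and the reason the SAV reformulation is so convenient — is that this computation never invokes the constraint $q=S$: the auxiliary equation has been defined so that the cubic nonlinearity reassembles exactly into the inner product against $w$, which is then annihilated by the factor $-\text{i}$.

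The one step requiring genuine care is the self-adjointness of $\mathcal{L}=-\frac12\Delta+V-\Omega L_z$ under the periodic boundary conditions \eqref{eq1.2}, on which both arguments above rest. The Laplacian is self-adjoint after two integrations by parts, with the boundary terms cancelling by periodicity, and multiplication by the real potential $V$ is trivially symmetric. The delicate term is the rotation $L_z=-\text{i}(x\partial_y-y\partial_x)$: here I would integrate the $x\partial_y$ piece by parts in $y$ and the $y\partial_x$ piece by parts in $x$, observing that in each case the coordinate coefficient is constant along the direction of integration, so that no extra term arises and the boundary contributions vanish by periodicity. This gives $(L_z\psi,\phi)=(\psi,L_z\phi)$, hence that $(\mathcal{L}\psi,\psi)$ is real. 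Once this is established, both conservation laws follow from the elementary facts $\text{Re}(-\text{i}r)=0$ for real $r$ and $\text{Re}(-\text{i}\|w\|^2)=0$.
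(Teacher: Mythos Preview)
Your proof is correct and follows essentially the same route as the paper: differentiate $M$ and $E$ along the flow, use the self-adjointness of $\mathcal{L}$ to collapse everything into $2\,\text{Re}(\partial_t\psi,w)$ (respectively $2\,\text{Re}(\partial_t\psi,\psi)$), and then kill these via the factor $-\text{i}$. Your additional paragraph verifying the self-adjointness of $L_z$ under periodic boundary conditions is a welcome detail that the paper simply asserts.
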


\begin{proof} It is clear to see
\begin{align*}
\frac{d E}{dt}&=\big(\partial_t{\mathcal L} \psi,\psi \big)+({\mathcal L}\psi,\partial_t \psi)+\beta q \frac{d}{dt} q  \\
&=(\partial_t \psi,{\mathcal L}\psi)+({\mathcal L}\psi,\partial_t \psi)+\bigg ( \partial_t \psi, \frac{\beta|\psi|^2\psi q}{\sqrt{(\psi^2,\psi^2)+C_0}} \bigg)
+\bigg ( \frac{\beta|\psi|^2\psi q}{\sqrt{(\psi^2,\psi^2)+C_0}},\partial_t \psi\bigg)\\
&=2{\textrm {Re}}\bigg ({\mathcal L}\psi+\frac{\beta|\psi|^2\psi q}{\sqrt{(\psi^2,\psi^2)+C_0}},\partial_t \psi\bigg)\\
&=-2{\textrm {Im}}\bigg ({\mathcal L}\psi+\frac{\beta|\psi|^2\psi q}{\sqrt{(\psi^2,\psi^2)+C_0}},{\mathcal L}\psi+\frac{\beta|\psi|^2\psi q}{\sqrt{(\psi^2,\psi^2)+C_0}} \bigg)\\
&=0,
\end{align*}
where {\textrm Re}($\cdot$) and {\textrm Im}($\cdot$) represent the real and imaginary parts of $\cdot$, respectively, and the self-adjointness of the linear operator  ${\mathcal L}$ (i.e., $({\mathcal L}\psi,\phi)=(\psi,{\mathcal L}\phi)$) is used in the third equality.

Similarly, we can deduce
\begin{align*}
\frac{d}{dt}M=\frac{d}{dt}(\psi,\psi)=(\partial_t \psi,\psi)+(\psi,\partial_t \psi)=2{\textrm {Re}}(\partial_t \psi,\psi)=2{\textrm {Im}}\bigg({\mathcal L}\psi+\frac{\beta|\psi|^2\psi q}{\sqrt{(\psi^2,\psi^2)+C_0}} ,\psi\bigg)=0.
\end{align*}
This completes the proof.
\end{proof}

%
%


\section{High-order structure-preserving discretization in time}
In this section, we derive a class of high-order methods for the reformulated system $\eqref{eq2.5}$ by utilizing the collocation method. We show that the proposed schemes can exactly preserve the semi-discrete form of the modified energy \eqref{modified-energy-conservation-law} and mass \eqref{mass-conservation-law}, simultaneously, when the Gauss collocation methods are employed in time. We here focus
on developing time-discrete methods, and denote $t_n=n\tau,\ n = 0,1,2\cdots,N$, where $\tau$ is the
time step. The approximations of the function $\psi({\bf x}, t)$ and $q(t)$ at time $t_n$ are denoted by $\psi^n$ and $q^n$, respectively.

\indent Applying an $s$-stage collocation method to the system $\eqref{eq2.5}$, we can obtain:
\begin{Scheme}\label{sch3.1}  Let $c_1,c_2,\cdots,c_s$ be distinct real numbers $(0 \leqslant c_i \leqslant 1)$. For given $(\psi^n,q^n)$, the collocation polynomials $u(t)$ and $v(t)$ are two polynomials of degree $s$ satisfying
\begin{align*}
&u(t_n)=\psi^n, \ v(t_n)=q^n,\\
&\partial_t u(t_n^i)=-\text{\rm i}\Bigg({\mathcal L}u(t_n^i) +\frac{\beta |u(t_n^i)|^2u(t_n^i)v(t_n^i)}{\sqrt{\big(u^2(t_n^i),u^2(t_n^i)\big)+C_0}} \Bigg),\\
&\frac{d}{dt} v(t_n^i)=\frac{\Big(\partial_t u(t_n^i),|u(t_n^i)|^2u(t_n^i)\Big)+\Big(|u(t_n^i)|^2u(t_n^i),\partial_t u(t_n^i)\Big)}{\sqrt{\big(u^2(t_n^i),u^2(t_n^i)\big)+C_0}},
\end{align*}
where $t_n^i=t_n+c_i \tau, i=1,2,\cdots,s$. Then the numerical solution is defined by $\psi^{n+1}=u(t_n+\tau)$ and $q^{n+1}=v(t_n+\tau)$, respectively.
\end{Scheme}

As is shown by Theorem 1.4 in~\cite{ELW06} that the collocation method could derive a special RK method. Once the collocation points $c_1,c_2,\cdots,c_s$ are chosen as Gaussian quadrature nodes, i.e., the zeros of the s-th shifted Legendre polynomial $
\frac{d^s}{dx^s}\big(x^s(x-1)^s\big)$, the resulting {\bf Scheme~\ref{sch3.1}} is the so-called Gaussian collocation method. According to Ref. \cite{ELW06}, the collocation method shares the same order 2s as the underlying quadrature formula. In particular, the coefficients of fourth order and sixth order Gauss collocation methods have been given explicitly in Ref. \cite{ELW06} (see Table~\ref{tab1} for more details).

\begin{table}[!t]
\hspace{-1cm}
\begin{minipage}[c]{0.46\textwidth}
\begin{spacing}{1.8}
\begin{flushright}
\begin{tabular}{c|cc}
$\frac{1}{2}-\frac{\sqrt{3}}{6}$ &$\frac{1}{4}$ & $\frac{1}{4}- \frac{\sqrt{3}}{6}$\\
$\frac{1}{2}+\frac{\sqrt{3}}{6}$ &$\frac{1}{4}+ \frac{\sqrt{3}}{6}$ &$\frac{1}{4}$ \\
\hline
                                 &$\frac{1}{2}$&$\frac{1}{2}$
\end{tabular}
\end{flushright}
\end{spacing}
\end{minipage}
\hspace{6mm}
\begin{minipage}[c]{0.46\textwidth}
\begin{spacing}{1.8}
\begin{flushleft}
\begin{tabular}{c|ccc}
$\frac{1}{2}-\frac{\sqrt{15}}{10}$ &$\frac{5}{36}$ &  $\frac{2}{9}-\frac{\sqrt{15}}{15}$    &$\frac{5}{36}- \frac{\sqrt{15}}{30}$\\
$\frac{1}{2}$   &$\frac{5}{36}+ \frac{\sqrt{15}}{24}$  &  $\frac{2}{9}$  & $\frac{5}{36}- \frac{\sqrt{15}}{24}$ \\
$\frac{1}{2}+\frac{\sqrt{15}}{10}$ & $\frac{5}{36}+ \frac{\sqrt{15}}{30}$ & $\frac{2}{9}+\frac{\sqrt{15}}{15}$  & $\frac{5}{36}$ \\
\hline
& $\frac{5}{18}$ & $\frac{4}{9}$  & $\frac{5}{18}$
\end{tabular}
\end{flushleft}
\end{spacing}
\end{minipage}
\setlength{\abovecaptionskip}{-0.7cm}
 \caption{\footnotesize RK coefficients of Gaussian collocation methods of order 4 (left) and 6 (right).}
 \label{tab1}
\end{table}

\begin{Theorem} \label{the3.1}The $s$-stage Gaussian collocation {\bf Scheme~\ref{sch3.1}} preserves the following semi-discrete energy and mass conservation laws
\begin{align*}
E^{n}=E^0, \ M^{n}=M^0, \ n=1,2,\cdots,N,
\end{align*}
where
\begin{align}\label{semi-discrte-energy-mass}
E^{n}=(\mathcal L \psi^n,\psi^n)+\frac{\beta}{2}(q^n)^2-\frac{\beta}{2}C_0, \ M^n=(\psi^n,\psi^n).
\end{align}
\end{Theorem}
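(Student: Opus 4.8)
The plan is to track the modified energy and mass \emph{along the collocation polynomials} $u(t),v(t)$ of \textbf{Scheme~\ref{sch3.1}} and show that neither changes over a single step $[t_n,t_n+\tau]$. Conceptually, the SAV reformulation \eqref{eq2.5} has replaced the quartic term $\frac{\beta}{2}(\psi^2,\psi^2)$ by $\frac{\beta}{2}q^2$, so that the modified energy $E$ and the mass $M$ in \eqref{semi-discrte-energy-mass} are both \emph{quadratic} functionals of the unknowns $(\psi,q)$; since Gauss collocation preserves quadratic invariants exactly, the conservation should follow, and the computation below is the concrete mechanism. Introduce $\widetilde E(t):=(\mathcal L u(t),u(t))+\frac{\beta}{2}v(t)^2-\frac{\beta}{2}C_0$ and $\widetilde M(t):=(u(t),u(t))$, so that $E^n=\widetilde E(t_n)$, $E^{n+1}=\widetilde E(t_n+\tau)$, and similarly for $M$. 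By the fundamental theorem of calculus it then suffices to prove $\int_{t_n}^{t_n+\tau}\frac{d}{dt}\widetilde E(t)\,dt=0$ and $\int_{t_n}^{t_n+\tau}\frac{d}{dt}\widetilde M(t)\,dt=0$.

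The first and decisive step is a degree count. Since $u$ and $v$ are polynomials of degree $s$ in $t$ (with spatial-function coefficients for $u$), each of $(\mathcal L u,u)$ and $v^2$ is a scalar polynomial in $t$ of degree at most $2s$; hence $\frac{d}{dt}\widetilde E(t)$ and $\frac{d}{dt}\widetilde M(t)$ are polynomials of degree at most $2s-1$. The $s$-point Gauss quadrature underlying \textbf{Scheme~\ref{sch3.1}} is exact on polynomials of degree up to $2s-1$, so with nodes $t_n^i=t_n+c_i\tau$ and weights $b_i$ one has the \emph{exact} identity
\[
\int_{t_n}^{t_n+\tau}\frac{d}{dt}\widetilde E(t)\,dt=\tau\sum_{i=1}^{s}b_i\,\frac{d}{dt}\widetilde E(t_n^i),
\]
and likewise for $\widetilde M$. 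This reduces both claims to showing that the integrands vanish at every collocation node. I expect this exactness-of-quadrature step to be the crux: it is precisely where the choice of Gauss nodes (rather than arbitrary collocation points) is used, and it is what allows arbitrary order $2s$ while retaining conservation.

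It remains to evaluate the integrands at each node, where the collocation conditions hold exactly; here I would simply replay the cancellation from the proof of the first Theorem. Self-adjointness of $\mathcal L$ gives $\frac{d}{dt}(\mathcal L u,u)=2\,\mathrm{Re}(\mathcal L u,\partial_t u)$, and rewriting the second collocation condition (pulling the real scalar into the inner product) gives $\beta v\,\frac{d}{dt}v=2\,\mathrm{Re}\big(\tfrac{\beta v|u|^2 u}{\sqrt{(u^2,u^2)+C_0}},\partial_t u\big)$, all quantities evaluated at $t_n^i$. Setting $A_i:=\mathcal L u(t_n^i)+\tfrac{\beta|u(t_n^i)|^2u(t_n^i)v(t_n^i)}{\sqrt{(u^2(t_n^i),u^2(t_n^i))+C_0}}$, the first collocation condition reads $\partial_t u(t_n^i)=-\mathrm{i}A_i$, whence
\[
\frac{d}{dt}\widetilde E(t_n^i)=2\,\mathrm{Re}\big(A_i,\partial_t u(t_n^i)\big)=2\,\mathrm{Re}\big(A_i,-\mathrm{i}A_i\big)=-2\,\mathrm{Im}(A_i,A_i)=0.
\]
For the mass, $\frac{d}{dt}\widetilde M(t_n^i)=2\,\mathrm{Re}\big(\partial_t u(t_n^i),u(t_n^i)\big)=2\,\mathrm{Im}\big(A_i,u(t_n^i)\big)$, which vanishes because $(\mathcal L u,u)$ is real (self-adjointness) and $(|u|^2u,u)=\||u|^2\|^2$ is real, exactly as in the first Theorem. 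Summing the node values against the weights $b_i$ then yields $E^{n+1}=E^n$ and $M^{n+1}=M^n$, and induction on $n$ completes the proof.
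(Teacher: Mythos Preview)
Your proposal is correct and follows essentially the same route as the paper's proof: write $E^{n+1}-E^n$ and $M^{n+1}-M^n$ as integrals of the time derivative of the quadratic invariants along the collocation polynomials, observe that these integrands are polynomials of degree $2s-1$ so that $s$-point Gauss quadrature is exact, and then verify that each nodal contribution vanishes by the collocation conditions together with the self-adjointness of $\mathcal L$. The paper's argument is identical in structure, only slightly less explicit in that it does not name $A_i$ or $\widetilde E,\widetilde M$.
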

\begin{proof} It follows from $\psi^n=u(t_n),q^n=v(t_n)$ and $\psi^{n+1}=u(t_{n+1}),q^{n+1}=v(t_{n+1})$ that
\begin{align*}
E^{n+1}-E^{n}&=(\mathcal L \psi^{n+1},\psi^{n+1})-(\mathcal L \psi^n,\psi^n)+\frac{\beta}{2}\big[(q^{n+1})^2-(q^{n})^2\big] \\
&=\big(\mathcal L u(t_{n+1}),u(t_{n+1})\big)-\big(\mathcal L u(t_{n}),u(t_{n})\big)+\frac{\beta}{2}\big[(v(t_{n+1}))^2-(v(t_{n}))^2\big]\\
&=\int_{t_n}^{t_{n+1}} \bigg [\frac{d}{dt}\big(\mathcal Lu(t),u(t)\big)+\frac{\beta}{2}\frac{d}{dt}v^2(t) \bigg ] dt \\
&=\int_{t_n}^{t_{n+1}}\Big [\big(\dot{u}(t),\mathcal Lu(t)\big)+\big(\mathcal Lu(t),\dot{u}(t)\big) +\beta v(t) \dot{v}(t)\Big] dt.
\end{align*}
The integrands $(\dot{u}(t),\mathcal Lu(t))+(\mathcal L{u}(t),\dot{u}(t))$  and $v(t)\dot{v}(t)$ are real polynomials of degree $2s-1$,
which can be integrated without error by the $s$-stage Gaussian quadrature formula. Thus it follows from the collocation condition that
\begin{align*}
&\int_{t_n}^{t_{n+1}}\Big [ \big(\dot{u}(t),\mathcal Lu(t)\big)+\big(\mathcal Lu(t),\dot{u}(t)\big) +\beta v(t) \dot{v}(t)\Big] dt\\
=&\tau \sum_{i=1}^{s}b_i \Big [ \big(\dot{u}(t_n^i),\mathcal Lu(t_n^i)\big)+\big(\mathcal Lu(t_n^i),\dot{u}(t_n^i)\big) +\beta v(t_n^i) \dot{v}(t_n^i)\Big]\\
=&2\tau {\text Re}\sum_{i=1}^{s}b_i \Big (\mathcal Lu(t_n^i)+\frac{\beta |u(t_n^i)|^2u(t_n^i)v(t_n^i)}{\sqrt{\big(u^2(t_n^i),u^2(t_n^i)\big)+C_0}},\dot{u}(t_n^i)\Big)\\
=&-2\tau {\text Im} \sum_{i=1}^{s}b_i \Bigg (\mathcal Lu(t_n^i)+\frac{\beta |u(t_n^i)|^2u(t_n^i)v(t_n^i)}{\sqrt{\big(u^2(t_n^i),u^2(t_n^i)\big)+C_0}},\mathcal Lu(t_n^i)+\frac{\beta |u(t_n^i)|^2u(t_n^i)v(t_n^i)}{\sqrt{\big(u^2(t_n^i),u^2(t_n^i)\big)+C_0}}\Bigg)\\
=&0,
\end{align*}
which yields $E^{n+1}=E^n, n=0,1,\cdots,N-1$. Similarly, we can obtain
\begin{align*}
M^{n+1}-M^n&=\int_{t_n}^{t_{n+1}}\frac{d}{dt}\big(u(t),u(t)\big)dt=\int_{t_n}^{t_{n+1}}[(\dot{u}(t),u(t))+(u(t),\dot{u}(t))]dt\\
&=2\tau {\text Re} \sum_{i=1}^{s}b_i(\dot{u}(t_n^i),u(t_n^i))\\
&=2\tau {\text Im}\sum_{i=1}^{s} b_i \bigg ( {\mathcal L}u(t_n^i) +\frac{\beta |u(t_n^i)|^2u(t_n^i)v(t_n^i)}{\sqrt{\big(u^2(t_n^i),u^2(t_n^i)\big)+C_0}} ,u(t_n^i)) \bigg )\\
&=0,
\end{align*}
which leads to $M^{n+1}=M^n, n=0,1,\cdots,N-1$. This completes the proof.
\end{proof}
\begin{Remark} It is remarked that any other symplectic Runge-Kutta method can preserve the quadratic invariant~\cite{cooper87,Sanzs88,Sanzs94}, thus, other arbitrarily high-order schemes which preserve the modified energy and the mass in \eqref{semi-discrte-energy-mass} can be easily obtained.
\end{Remark}

\section{Structure-preserving spatial discretization}
As sated as above, the semi-discrete {\bf Scheme~\ref{sch3.1}} can reach arbitrarily high-order in time, and exactly preserve the semi-discrete modified energy and mass, respectively.  In general, the numerical schemes are called structure-preserving if they can preserve the corresponding physical/geometric properties exactly after temporal and spatial full-discretizations. Thus, the structure-preserving spatial discretization is a major concern. In this paper, the standard Fourier pseudo-spectral method is chosen for spatial discretizations because of the high-order accuracy as well as the application of FFT technique \cite{ST06}. We show that the resulting fully-discrete schemes can preserve the energy and mass conservation laws in the fully discrete level.

To make the remaining part self-explanatory, we briefly reintroduce the following notations (see~\cite{cuijincpc} for more details).  For given even integers $N_x, N_y$ and $N_z$, the spatial domain $\mathcal D=[0,l_x]\times[0,l_y]\times[0,l_z]$ is uniformly partitioned with step sizes $h_x=l_x/N_x, h_y=l_y/N_y, h_z=l_z/N_z$, and the spatial grid points are denoted as follows:
\begin{align*}
 \Omega_h=\{(x_j,y_k,z_l)|x_j=jh_x, y_k=kh_y, z_l=lh_z, \ (j,k,l) \in \mathcal{T}_h    \},
\end{align*}
where the index set $\mathcal{T}_h$ is defined as
\begin{align*}
\mathcal{T}_h=\{ {\bm j}:=(j,k,l)| 0 \leqslant j \leqslant N_x-1, 0 \leqslant k \leqslant N_y-1, 0 \leqslant l \leqslant  N_z-1 \}.
\end{align*}
Let
\begin{align*}
{\mathbb V_h}=\{ U  | U =(&U_{0,0,0}, U_{1,0,0}, \cdots, U_{N_x-1,0,0}, \ \
                                  U_{0,1,0}, U_{1,1,0}, \cdots, U_{N_x-1,1,0}, \\
                                 &U_{0,N_y-1,0}, U_{1,N_y-1,0}, \cdots, U_{N_x-1,N_y-1,0}, \ U_{0,0,1}, U_{1,0,1}, \cdots, U_{N_x-1,0,1},\ \cdots, \\
                                 &U_{0,N_y-1,N_z-1}, U_{1,N_y-1,N_z-1}, \cdots, U_{N_x-1,N_y-1,N_z-1}  )^{\top}   \}
\end{align*}
be a vector space of grid functions defined on $\Omega_h$.
Note that the bold $\bm j \in \mathcal{T}_h $ refer to an index, while $j$ means the first component of $\bm j$.
For any two grid functions $u,v \in \mathbb V_h$, we define the discrete inner product and norm, respectively, as follows:
\begin{align*}
\langle u,v  \rangle _h:=h_1h_2h_3 \sum\limits_{{\bm j}\in \mathcal{T}_h}u_{{\bm j}} {\bar v}_{{\bm j}}, \ \ \lVert v \rVert_h=\sqrt{\langle v,v\rangle_h}, \ \ \lVert v \rVert_{\infty,h}= \mathop{\mathrm{max}}\limits_{{{\bm j}\in \mathcal{T}_h} }|v_{\bm j}|, \ \ \Vert v \Vert_{p,h}= \sqrt[p]{h_1h_2h_3 \sum\limits_{{\bm j}\in \mathcal{T}_h}| { v}_{{\bm j}}|^p },
\end{align*}
where ${\bar v}_{{\bm j}}$ refers to the conjugate of ${ v}_{\bm j}$. In addition, we denote $`\cdot$' as the componentwise product of the vectors, that is,
\begin{align*}
{u}\cdot {v}=&\big(u_{0,0,0}v_{0,0,0},\cdots,u_{N_x-1,0,0}v_{N_x-1,0,0},\cdots,u_{0,N_y-1,N_z-1}v_{0,N_y-1,N_z-1},\\
&\cdots,u_{N_x-1,N_2y-1,N_z-1}v_{N_x-1,N_y-1,N_z-1}\big)^{\top}.
\end{align*}
For brevity, we denote $u\cdot u$ as $u^2$.

Denote
\begin{align*}
S_N={\rm span}\{g_j(x)g_k(y)g_l(z), \ \ (j,k,l) \in  \mathcal{T}_h \}
\end{align*}
as the interpolation space, where $g_j(x), g_k(y)$ and $g_l(z)$ are trigonometric polynomials of degree $N_x/2, N_y/2$ and $N_z/2$, given respectively by
\begin{align*}
&g_j(x)=\frac{1}{N_x}\sum\limits_{p=-N_x/2}^{N_x/2}\frac{1}{a_p}e^{{ i} p \mu_x ( x-x_j ) }, \ \ \
g_k(y)=\frac{1}{N_y}\sum\limits_{q=-N_y/2}^{N_y/2}\frac{1}{b_q}e^{{ i} q \mu_y ( y-y_k ) }, \\
&g_l(z)=\frac{1}{N_z}\sum\limits_{r=-N_z/2}^{N_z/2}\frac{1}{c_r}e^{{ i} r \mu_z ( z-z_l ) },
\end{align*}
where $\mu_w=2\pi/l_w$, $w=x,y,z$ and
\begin{align*}
       a_p=\begin{cases} 1, \text{ $|p|<\frac{N_x}{2}$},\\
                         2, \text{ $|p|=\frac{N_x}{2}$},
           \end{cases}
      b_q=\begin{cases} 1, \text{ $|q|<\frac{N_y}{2}$},\\
                         2, \text{ $|q|=\frac{N_y}{2}$},
           \end{cases}
      c_r=\begin{cases} 1, \text{ $|r|<\frac{N_z}{2}$},\\
                         2, \text{ $|r|=\frac{N_z}{2}$}.
           \end{cases}
\end{align*}
We define the interpolation operator $I_N:  C(\mathcal{D}) \to S_N$ as follows \cite{chen01msf}:
\begin{align} \label{eq2.2}
I_N \psi(x,y,z)=\sum\limits_{j=0}^{N_x-1}\sum\limits_{k=0}^{N_y-1}\sum\limits_{l=0}^{N_z-1} \psi_{j,k,l} \ g_j(x)g_k(y) g_l(z),
\end{align}
where $\psi_{j,k,l}=\psi(x_j,y_k,z_l), \ g_j(x_m)=\delta_m^j, g_k(x_n)=\delta_n^k, g_l(x_s)=\delta_s^l.$ \\

 To derive $\partial_x^{s_1}\partial_y^{s_2}\partial_z^{s_3} I_N \psi(x_j,y_k,z_l)$ at the collocation points $(x_j,y_k,z_l)$, one can differentiate \eqref{eq2.2} to arrive that
 \begin{align*}
 \partial_x^{s_1}\partial_y^{s_2}\partial_z^{s_3} I_N \psi(x_j,y_k,z_l)
 =&\sum\limits_{m=0}^{N_x-1}\sum\limits_{n=0}^{N_y-1}\sum\limits_{s=0}^{N_z-1}\psi_{m,n,s}
 \frac{d^{s_1}g_m(x_j)}{dx^{s_1}}  \frac{d^{s_2}g_n(y_k)}{dy^{s_2}}  \frac{d^{s_3}g_s(z_l)}{dz^{s_3}}\\
 =&\bigg (\big(D^z_{s_3} \otimes D^y_{s_2} \otimes D^x_{s_1}\big){\bm \psi} \bigg)_{\bm j},
 \end{align*}
where $\otimes$ denotes the Kronecker product and $D^x_{s_1}$ is an $N_x\times N_x$ matrix, $D^y_{s_2}$ is an $N_y \times N_y$ matrix, and $D^z_{s_3}$ is an $N_z\times N_z $ matrix, with elements given by
 \begin{align*}
 (D_{s_1}^x)_{j,m}= \frac{d^{s_1}g_m(x_j)}{dx^{s_1}}, \ \ (D^y_{s_2})_{k,n}=\frac{d^{s_2}g_n(y_k)}{dy^{s_2}}, \ \
 ( D^z_{s_3})_{l,s}=\frac{d^{s_3}g_s(z_l)}{dz^{s_3}}.
 \end{align*}
 Note that $\big ((D^z_{s_3} \otimes D^y_{s_2} \otimes D^x_{s_1}) {\bm  \psi} \big)_{\bm j}$ refers to the $(N_xN_y(l-1)+N_x(k-1)+j)$-th component of the vector $(D^z_{s_3} \otimes D^y_{s_2} \otimes D^x_{s_1}) {\bm  \psi} , \ {\bm  \psi} \in {\mathbb V_h} $. For brevity, we use similar notations hereafter.
\\
\indent In particular, for first and second derivatives, we obtain
 \begin{align*}
 \begin{split}
 &\partial_{x}I_N \psi(x_j,y_k,z_l)=\big ((I_{N_z}\otimes I_{N_y} \otimes D_1^x) {\bm  \psi} \big)_{\bm j}, \
  \partial_{y}I_N \psi(x_j,y_k,z_l)=\big ((I_{N_z}\otimes D_1^y   \otimes I_{N_x}) {\bm  \psi} \big)_{\bm j},  \\
 &\partial_{x}^2I_N \psi(x_j,y_k,z_l)=\big ((I_{N_z}\otimes I_{N_y} \otimes D_2^x) {\bm  \psi} \big)_{\bm j}, \
  \partial_{y}^2I_N \psi(x_j,y_k,z_l)=\big ((I_{N_z}\otimes D_2^y\otimes I_{N_x}) {\bm  \psi} \big)_{\bm j},   \\
 &\partial_{z}^2I_N \psi(x_j,y_k,z_l)=\big ((D_2^z \otimes I_{N_y}  \otimes I_{N_x}) {\bm  \psi} \big)_{\bm j},  \
 \end{split}
 \end{align*}
 where $D_1^x, D_1^y$ are skew-symmetric matrices, $D_2^x, D_2^y, D_2^z$ are symmetric matrices, and we further have \cite{ST06}
 \begin{align} \label{4.0}
 \left \{
 \aligned
 &D_1^w=F_N^H \Lambda_1^w F_N, \  \Lambda_1^w=\text{i} \mu_w {\text {diag}}\Big(0,1,\cdots,\frac{N_w}{2}-1,0,-\frac{N_w}{2}+1,\cdots,-1\Big), \\
 &D_2^w=F_N^H \Lambda_2^w F_N, \  \Lambda_2^w=\Big [ \text{i} \mu_w {\text {diag}}\big(0,1,\cdots,\frac{N_w}{2}-1,\frac{N_w}{2},-\frac{N_w}{2}+1,\cdots,-1\big) \Big ]^2,
 \endaligned
 \right.
 \end{align}
 where $F_{N_w}$ is the discrete Fourier transform matrix with elements
$(F_{N_w})_{j,k}=\frac{1}{\sqrt{N_w}}e^{-i\frac{2\pi}{N_w}jk}$, $F_{N_w}^H$ is the conjugate transpose matrix of $F_{N_w}$, where $w=x,y,z$.

 For any $u \in {\mathbb V_h}$, we introduce the following spectral operators in the vector form
 \begin{align*}
 &\Delta_h u=(I_{N_z}\otimes I_{N_y} \otimes D_2^x)u+(I_{N_z}\otimes D_2^y\otimes I_{N_x})u+ (D_2^z \otimes I_{N_y}  \otimes I_{N_x})u, \\
  &{\mathbb D}_1^x u=(I_{N_z}\otimes I_{N_y} \otimes D_1^x)u, \  \
 {\mathbb D}_1^y  u=(I_{N_z}\otimes D_1^y \otimes I_{N_x})u, \ \
 I_x u=\big(I_{N_z} \otimes I_{N_y} \otimes X \big)u,  \\
  & I_y u=\big(I_{N_z} \otimes Y \otimes I_{N_x} \big)u, \ \
  L_z^hu=-{ \rm i}(I_x {\mathbb D}_1^y -I_y {\mathbb D}_1^x)u, \ \
 \mathcal L_h u=-\frac{1}{2}\Delta_h u +V\cdot u-\Omega  L_z^hu,
 \end{align*}
where $X={\rm diag}( x_0,x_1,\cdots,x_{N_x-1} )$ and $Y={\rm diag}( y_0,y_1,\cdots,y_{N_y-1} )$. It follows from {\bf Lemma 2.2} in~\cite{cuijincpc} that
\begin{align*}
\langle \Delta_h u,v \rangle_h=\langle u,\Delta_h v\rangle_h,  \ \langle L_z^h u,v \rangle_h=\langle u,L_z^h  v \rangle _h, \ u,v \in {\mathbb V_h},
\end{align*}
which implies that
 \begin{align}\label{4.2}
\langle \mathcal L_h u,v \rangle_h=\langle u,\mathcal L_h v \rangle_h, \ \ \langle \mathcal L_h u,u \rangle _h \in {\mathbb R}.
 \end{align}

\indent Applying the Fourier pseudo-spectral method in space for {\bf Scheme~\ref{sch3.1}}, we then obtain the following full discrete scheme.
\begin{Scheme} \label{sch4.1}
Let $c_1,\cdots,c_s$ be distinct real numbers $(0 \leqslant c_i \leqslant 1)$. For given $\Psi^n \in V_h$ and ${ q}^n \in {\mathbb R}$, we assume that ${ u}(t)$ is a ${N_x}\times {N_y}\times {N_z}$ dimensional vector polynomial of degree $s$ and ${ v}(t)$ is a polynomial of degree $s$ satisfying
\begin{align*}
&{ u}(t_n)=\Psi^n, \ { v}(t_n)={ q}^n, \\
&{\dot { u}}(t_n^i)=-\text{\rm i}\Bigg({\mathcal L_h}{ u}(t_n^i) +\frac{\beta |{ u}(t_n^i)|^2{ u}(t_n^i){ v}(t_n^i)}{\sqrt{\big \langle { u}^2(t_n^i),{  u}^2(t_n^i) \big \rangle _h+C_0}} \Bigg),\\
&{\dot  { v}}(t_n^i)=\frac{\big \langle {\dot { u}}(t_n^i),|{ u}(t_n^i)|^2{ u}(t_n^i)\big \rangle _h+\big \langle |{ u}(t_n^i)|^2{ u}(t_n^i),{\dot { u}}(t_n^i)\big \rangle _h}{\sqrt{\big \langle { u}^2(t_n^i),{ u}^2(t_n^i)\big \rangle _h+C_0}},
\end{align*}
where $t_n^i=t_n+c_i\tau, i=1,\cdots,s.$ Then the numerical solution is defined by $\Psi^{n+1}={u}(t_{n+1})$ and ${ q}^{n+1}={ v}(t_{n+1})$.
\end{Scheme}
Analogous to arguments in the semi-discrete scheme, we can derive the following result.
\begin{Theorem} \label{th4.1}  The fully discrete {\bf Scheme \ref{sch4.1}} can preserve the fully-discrete modified energy and mass, that is,
\vspace{-0.2cm}
\begin{align*}
E_h^{n}=E_h^0, \ M_h^{n}=M_h^0, \ n=1,\cdots,N,
\end{align*}
\vspace{-0.35cm}
where
\begin{align}\label{4.04}
M_h^n=\langle \Psi^n,\Psi^n \rangle _h,  \ E_h^{n}=\langle \mathcal L_h \Psi^n,\Psi^n \rangle _h+\frac{\beta}{2}( {q}^n)^2-\frac{\beta}{2}C_0.
\end{align}
\end{Theorem}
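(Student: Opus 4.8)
The plan is to transcribe the argument of Theorem~\ref{the3.1} verbatim into the fully discrete setting, replacing the $L^2$ inner product $(\cdot,\cdot)$ by the discrete one $\langle\cdot,\cdot\rangle_h$ and the operator $\mathcal L$ by $\mathcal L_h$; the two algebraic ingredients that made the semi-discrete proof work---self-adjointness and the reality of the relevant pairings---are already available discretely through~\eqref{4.2} and the definition of $\langle\cdot,\cdot\rangle_h$. By induction it suffices to prove the one-step identities $E_h^{n+1}=E_h^n$ and $M_h^{n+1}=M_h^n$. Using $\Psi^n=u(t_n)$, $q^n=v(t_n)$, $\Psi^{n+1}=u(t_{n+1})$, $q^{n+1}=v(t_{n+1})$ and the fundamental theorem of calculus, I would first write
\begin{align*}
E_h^{n+1}-E_h^n=\int_{t_n}^{t_{n+1}}\Big[\langle\dot u(t),\mathcal L_h u(t)\rangle_h+\langle\mathcal L_h u(t),\dot u(t)\rangle_h+\beta v(t)\dot v(t)\Big]\,dt,
\end{align*}
where the self-adjointness of $\mathcal L_h$ from~\eqref{4.2} is invoked when differentiating $\langle\mathcal L_h u(t),u(t)\rangle_h$.

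The decisive observation is a degree count. Since $u(t)$ and $v(t)$ are polynomials of degree $s$ and $\mathcal L_h$ is linear, $\mathcal L_h u(t)$ is again a polynomial of degree $s$ in $t$, so the bracketed integrand is a \emph{real} polynomial of degree at most $2s-1$. The $s$-stage Gaussian quadrature is exact on such polynomials, whence
\begin{align*}
E_h^{n+1}-E_h^n=\tau\sum_{i=1}^{s}b_i\Big[\langle\dot u(t_n^i),\mathcal L_h u(t_n^i)\rangle_h+\langle\mathcal L_h u(t_n^i),\dot u(t_n^i)\rangle_h+\beta v(t_n^i)\dot v(t_n^i)\Big].
\end{align*}
Substituting the collocation condition for $\dot v(t_n^i)$ and collecting the two linear pairings, each summand collapses to $2{\rm Re}\langle F_i,\dot u(t_n^i)\rangle_h$ with $F_i:=\mathcal L_h u(t_n^i)+\beta|u(t_n^i)|^2u(t_n^i)v(t_n^i)/\sqrt{\langle u^2(t_n^i),u^2(t_n^i)\rangle_h+C_0}$. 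The remaining collocation condition gives $\dot u(t_n^i)=-{\rm i}F_i$, so $2{\rm Re}\langle F_i,\dot u(t_n^i)\rangle_h=-2{\rm Im}\langle F_i,F_i\rangle_h=0$ because $\langle F_i,F_i\rangle_h=\Vert F_i\Vert_h^2$ is real; summing over $i$ yields $E_h^{n+1}=E_h^n$.

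The mass identity is obtained the same way: $M_h^{n+1}-M_h^n=\int_{t_n}^{t_{n+1}}2{\rm Re}\langle\dot u(t),u(t)\rangle_h\,dt$, whose integrand is again a real polynomial of degree at most $2s-1$; exact Gaussian quadrature together with $\dot u(t_n^i)=-{\rm i}F_i$ turns this into $2\tau\,{\rm Im}\sum_{i=1}^{s}b_i\langle F_i,u(t_n^i)\rangle_h$, which vanishes because $\langle F_i,u(t_n^i)\rangle_h$ is real: its linear part $\langle\mathcal L_h u(t_n^i),u(t_n^i)\rangle_h$ lies in $\mathbb R$ by~\eqref{4.2}, while its nonlinear part equals $\beta v(t_n^i)\Vert u(t_n^i)\Vert_{4,h}^4/\sqrt{\langle u^2(t_n^i),u^2(t_n^i)\rangle_h+C_0}$, which is real since $v$ is real-valued (being the degree-$s$ polynomial fixed by the real data $v(t_n)=q^n$ and the real nodal values $\dot v(t_n^i)$). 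The only genuinely load-bearing step is the exact Gaussian integration, which rests on the degree-$\le 2s-1$ count for the integrands; the Hermitian self-adjointness and reality properties needed to close the argument are already supplied by~\eqref{4.2} and the definition of $\langle\cdot,\cdot\rangle_h$, so once the degree count is secured the computation is a line-by-line copy of the proof of Theorem~\ref{the3.1}.
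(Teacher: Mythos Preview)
Your proposal is correct and is precisely the argument the paper intends: the paper's own proof simply states that it is similar to Theorem~\ref{the3.1} and omits the details, and what you have written is exactly that transcription, with $(\cdot,\cdot)$ replaced by $\langle\cdot,\cdot\rangle_h$ and $\mathcal L$ by $\mathcal L_h$, using~\eqref{4.2} for self-adjointness and reality. Your added justification that $v$ is real-valued (hence the nonlinear pairing in the mass step is real) is a small but welcome clarification that the paper leaves implicit.
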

\begin{proof} The proof is similar to the {\bf Theorem \ref{semi-discrte-energy-mass}}. For brevity, we omit the details.
\end{proof}
\begin{Remark} If the standard Fourier pseudo-spectral method is applied to the system \eqref{2.1} for spatial discretizations, the discrete Hamiltonian energy at time level $t_n$  is given by
\begin{align} \label{4.05}
H_h^n=\langle \mathcal L_h \Psi^n,\Psi^n \rangle _h+\frac{\beta}{2}\| \Psi^n\|_{4,h}^4.
\end{align}
 However, we should note that the modified energy \eqref{modified-energy-conservation-law} is only equivalent to the Hamiltonian energy \eqref{energy-conservation-law} in the continuous sense, but not for the discrete sense. Thus, the proposed schemes cannot preserve such discrete Hamiltonian energy exactly.

\end{Remark}


\section{A fast solver for the proposed high-order schemes}

In this section, we develop a fast solver to implement {\bf Scheme~\ref{sch4.1}} efficiently. For brevity, we take the 2-stage Gauss method (i.e., $s=2$) for an example where the corresponding RK coefficients $a_{ij}, b_j, i,j=1,2$ are given in Table~\ref{tab1}. 

For given $\psi^n,q^n$, the 2-stage Gauss method can be rewritten as
\begin{align}\label{eq:5.1}
\left \{
 \aligned
&\Psi_1=\psi^n+\tau a_{11}k_1+\tau a_{12}k_2,\ \Psi_2=\psi^n+\tau a_{21}k_1+\tau a_{22}k_2,\\
&l_i=2\text{Re}\langle k_i,\Phi_i\rangle_h,\ \Phi_i= \frac{|\Psi_i|^2\cdot\Psi_i}{\sqrt{\|\Psi_i\|_{4,h}^4+C_0}},\  i=1,2, \\
&Q_1 = q^n+\tau a_{11}l_1+\tau a_{12}l_2,\ Q_2 = q^n+\tau a_{21}l_1+\tau a_{22}l_2,
\endaligned
 \right.
\end{align}
and
\begin{align}\label{eq:5.2}
&k_1=-\text{i}\mathcal{L}_h\psi^n-\text{i}\tau a_{11}\mathcal{L}_hk_1-\text{i}\tau a_{12}\mathcal{L}_hk_2-\text{i}\beta\Phi_1Q_1,\\\label{eq:5.3}
&k_2=-\text{i}\mathcal{L}_h\psi^n-\text{i}\tau a_{21}\mathcal{L}_hk_1-\text{i}\tau a_{22}\mathcal{L}_hk_2-\text{i}\beta\Phi_2Q_2,
\end{align}
where $\psi^{n+1}$ and $q^{n+1}$ are updated by
\begin{align}\label{eq:5.4}
&\psi^{n+1}=\psi^n+\tau \sum_{i=1}^2b_ik_i,\ q^{n+1}=q^n+\tau \sum_{i=1}^2b_il_i.
\end{align}

Recalling the linear operator $\mathcal{L}_hu:=-\frac{1}{2}\Delta_hu+  \mathcal{L}_{ h}^2 u$, where   $\mathcal{L}_h^2 u =V\cdot u-\Omega L_z^hu$, one can reformulate the equations \eqref{eq:5.2} and \eqref{eq:5.3} respectively as
\begin{align*}
&k_1-\frac{\text{i}\tau a_{11}}{2}\Delta_hk_1-\frac{\text{i}\tau a_{12}}{2}\Delta_hk_2=f_1(\psi^n,q^n,k_1,k_2),\\
&k_2-\frac{\text{i}\tau a_{21}}{2}\Delta_hk_1-\frac{\text{i}\tau a_{22}}{2}\Delta_hk_2=f_2(\psi^n,q^n,k_1,k_2),
\end{align*}
where
\begin{align*}
&f_i(\psi^n,q^{n},k_1,k_2)=-\text{i}\mathcal{L}_h\psi^n-\text{i}\tau a_{i1}\mathcal{L}_h^2k_1-\text{i}\tau a_{i2}\mathcal{L}_h^2k_2-\text{i}\beta\Phi_iQ_i  , \ i=1,2.
\end{align*}

Then, we apply the fixed-point iteration method to solve the nonlinear algebraic equations as above. For iteration step $s$, we have
\begin{align}\label{eq:5.5}
&k_1^{s+1}-\frac{\text{i}\tau a_{11}}{2}\Delta_hk_1^{s+1}-\frac{\text{i}\tau a_{12}}{2}\Delta_hk_2^{s+1}=f_1(\psi^n,q^n,k_1^{s},k_2^{s}),\\\label{eq:5.6}
&-\frac{\text{i}\tau a_{21}}{2}\Delta_hk_1^{s+1}+k_2^{s+1}-\frac{\text{i}\tau a_{22}}{2}\Delta_hk_2^{s+1}=f_2(\psi^n,q^n,k_1^{s},k_2^{s}).
\end{align}
For brevity, we denote
\begin{align*}
&\widetilde{k}_i=(F_{N_z}\otimes F_{N_y}\otimes F_{N_x} ) {k}_i, \ \ \widetilde{f}_i=(F_{N_z}\otimes F_{N_y}\otimes F_{N_x}) {f}_i, \ i=1,2, \\
& \widetilde{\Delta}_h=\Lambda_{2}^{z}\otimes I_{N_y}\otimes I_{N_x}+I_{N_z}\otimes \Lambda_{2}^{y}\otimes I_{N_x}+I_{N_z}\otimes I_{N_y}\otimes \Lambda_{2}^{x}.
\end{align*}
Multiplying both sides of~\eqref{eq:5.5} and \eqref{eq:5.6} by the matrix $F_{N_z} \otimes F_{N_y} \otimes F_{N_x} $, respectively, we then obtain from \eqref{4.0}, together with the definitions of $\Delta_h$ and $\otimes$,  that
\begin{align*}
&\widetilde{k}_1^{s+1}-\frac{\text{i}\tau a_{11}}{2}\widetilde{\Delta}_h\widetilde{k}_1^{s+1}-\frac{\text{i}\tau a_{12}}{2}\widetilde{\Delta}_h\widetilde{k}_2^{s+1}=\widetilde{f}_1(\psi^n,q^n,k_1^{s},k_2^{s}),\\
&-\frac{\text{i}\tau a_{21}}{2}\widetilde{\Delta}_h\widetilde{k_1}^{s+1} + \widetilde{k}_2^{s+1}-\frac{\text{i}\tau a_{22}}{2}\widetilde{\Delta}_h\widetilde{k}_2^{s+1}=\widetilde{f}_2(\psi^n,q^n,k_1^{s},k_2^{s}),
\end{align*}
which implies the following relation
\begin{align*}
&\Big[1-\frac{\text{i}\tau a_{11}}{2}\big(\lambda_{2,j}^x+\lambda_{2,k}^y+\lambda_{2,l}^z\big)\Big](\widetilde{k}_1)_{\bm j}^{s+1}
-\frac{\text{i}\tau a_{12}}{2}\big(\lambda_{2,j}^x+\lambda_{2,k}^y+\lambda_{2,l}^z\big)(\widetilde{k}_2)_{\bm j}^{s+1}=(\widetilde{f}_1)_{\bm j}(\psi^n,q^n,k_1^{s},k_2^{s}),\\
&-\frac{\text{i}\tau a_{21}}{2}\big(\lambda_{2,i}^x+\lambda_{2,j}^y+\lambda_{2,k}^z\big)(\widetilde{k}_1)_{\bm j}^{s+1}+
\Big[1-\frac{\text{i} \tau a_{22}}{2}\big(\lambda_{2,j}^x+\lambda_{2,k}^y+\lambda_{2,l}^z\big)\Big](\widetilde{k}_2)_{\bm j}^{s+1}=(\widetilde{f}_2)_{\bm j}(\psi^n,q^n,k_1^{s},k_2^{s}),
\end{align*}
where ${\bm j}\in {\mathcal T}_h$ and $\lambda_{2,j}^w$ represents the $j$-th eigenvalues of the spectral differential matrix  $D_2^w,\ w=x,y,z$ (see \eqref{4.0}).
For a given ${\bm j}\in\mathcal{T}$, the above equations derive  a $2\times 2$ linear system for the unknowns $((\widetilde{k}_1)_{\bm j}^{s+1},(\widetilde{k}_2)_{\bm j}^{s+1})^T$.

Solving above linear system for all ${\bm j}\in\mathcal{T}$, we can obtain $\widetilde{k}_1^{s+1}$ and $\widetilde{k}_2^{s+1}$,  then the relation {$k_i^{s+1}=\big(F_{N_z}^H \otimes F_{N_y}^H \otimes F_{N_x}^H \big) \widetilde  k_i^{s+1}$} further gives ${k}_i^{s+1}, i=1,2$. In practical computation, the iteration terminates if the infinity norm of the error between two adjacent iterative steps is less than $10^{-14}$, that is,
\begin{align*}
\mathop{\rm max}\limits_{l\leqslant i\leqslant 2} \big \{  \lVert k_i^{s+1}-k_i^s \rVert_{\infty,h} \big \} < 10^{-14}.
\end{align*}
Subsequently, $l_i,\ i=1,2$ is calculated by \eqref{eq:5.1}. Finally, $\psi^{n+1}$ and $q^{n+1}$ are updated from \eqref{eq:5.4}.

\begin{Remark} We should note the following facts: (i) the SAV approach needs to introduce an
auxiliary variable, but it can be eliminated in practical computation;
 (ii) the related data, in our practical computation, are stored in three-dimensional arrays instead of vectors, thus the fast Fourier transform (FFT) algorithms can be employed to speed up the process;
 (iii) small modifications would allow us to efficiently implement arbitrary stage RK methods.
\end{Remark}


\section{Numerical examples}
In this section, some numerical examples are carried to investigate the accuracy, CPU time and invariants-preservation of the proposed schemes.
As shown above, the newly proposed scheme~\ref{sch4.1}, which preserves the discrete mass and modified energy precisely, could reach arbitrarily high-order accuracy in time. Next, we take for example the 4th- and 6th-order Gaussian collocation methods, denoted by 4th-order HSAV and 6th-order HSAV, respectively. The numerical results would be compared with the  Crank-Nicolson finite difference (CNFD) method~\cite{baoc}, semi-implicit finite difference (SIFD) method~\cite{baoc}, and the linearly implicit Fourier pseudo-spectral (LIFP) method~\cite{cuijincpc}. In addition, the convergent rate is obtained by the following formula
\begin{align*}
\quad {\rm  Rate}=\frac{{\rm ln} \big(error_1/error_2)}{{\rm ln} (\delta_1/\delta_2)},
\end{align*}
where $\delta_l,error_l \ (l=1,2)$ are step sizes and errors with step size $\delta_l$, respectively.\\

\noindent \textbf{Example 6.1} In this example, we mainly investigate the temporal accuracy and computational efficiency of the proposed schemes in 3D, and take the initial condition $\psi_0$ and the external trap potential $V(\bf x)$ in~\eqref{eq1.1}-\eqref{eq1.3} as
\begin{align*}
& \psi_0(x,y,z)=\frac{(\gamma_x\gamma_y\gamma_z)^{1/4}}{ 2{\pi}^{3/4}}e^{-V(x,y,z)}, \\
& V(x,y,z)=(\gamma_x^2 x^2+ \gamma_y ^2 y^2+\gamma_z^2 z^2)/2,
\end{align*}
and choose $\mathcal{D}=[-8,8]^3, \Omega=0.7$ and $\gamma_x=\gamma_y=\gamma_z=1.0$. For comparison, the numerical ``exact" solution $\psi_e$ is obtained by the 6th-order HSAV method with $\tau=10^{-3}$ and $h=1/8$.
Let $e(\tau, h)$ be the error of numerical solution with mesh size $h$ and time step $\tau$.
We compute the discrete $L^{\infty}$ errors between the numerical ``exact" solution and the numerical solution by the 4th- and 6th-order HSAV methods, respectively. Moreover, the mass error, Hamiltonian energy error and quadratic energy error on time level $t_n$ will be calculated by the following formulas:
\begin{align*}
 e(M^n):=|M_h^n- M_h^0|, \  e(H^n):=|H_h^n- H_h^0|, \ e(E^n):=|E_h^n- E_h^0|, \ \ n=1,2,\cdots,N,
\end{align*}
respectively.

\indent Numerical results are shown in Table~\ref{Tab2} with different values of $\beta$. As is illustrated that the 4th- and 6th-order HSAV methods arrive at fourth-order and sixth-order convergence rates in time, respectively. Moreover, for a given  time step and mesh size, the numerical errors are observed to increase along with the growth of $\beta$. In fact, the increase in $\beta$ can cause more
vortices, and the lattice will thereby becomes much dense. In this case, the high-order numerical algorithms, such as the 6th-order HSAV, can show their obvious advantages in practical computations to obtain a given high accuracy.

\vspace{-3mm}

\begin{table}[H]
\footnotesize
\caption{\footnotesize Temporal errors of the numerical solutions with $t=3,N=32$. \label{Tab2}}
\centering
\begin{tabular}{@{}  c  @{}  c  @{}  c  @{}  c  @{} c  @{} c  @{}  }
\toprule
 \qquad{ } &\qquad{ } \qquad{ }  & \quad{ } $\tau=0.02$ { }& \quad{ }  $\tau=0.015$  { }& \quad{ } $ \tau=0.01$ \quad{ } & { } $ \tau=0.005$    \quad{ } \\
\hline
 \hspace{2.9cm} $\beta=20$         &$\lVert e \rVert_{\infty}$  & 6.0575e-007 &  1.9225e-007 & 3.8033e-008 & 2.3615e-009        \\
                          &{\small Rate}           &* & 3.99 &3.99 & 4.00      \\
 4th-order HSAV \hspace{0.55cm} $\beta=50$        &$\lVert e \rVert_{\infty}$  &2.6576e-006 & 8.4483e-007 & 1.6728e-007 &1.0391e-008    \\
                          &{\small Rate}            & *&3.99 & 3.99& 4.00  \\
 \hspace{2.9cm} $\beta=100$       &$\lVert e \rVert_{\infty}$  & 1.0359e-005 & 3.3007e-006 & 6.5436e-007 &4.0667e-008    \\
                          &{\small Rate}            & *&3.98 & 3.99& 4.00  \\
 \hline
  \hspace{2.9cm}$\beta=20$ &      $\lVert e \rVert_{\infty}$  & 5.8626e-010  & 1.0460e-010  & 9.2088e-012   & 1.4132e-013  \\
                          &{\small Rate}           & *  & 5.99 &   5.99    &  6.02   \\
  6th-order HSAV  \hspace{0.55cm} $\beta=50$       &$\lVert e \rVert_{\infty}$  & 3.4924e-009 & 6.2332e-010   & 5.4876e-011  & 8.5756e-013\\
                          &{\small Rate}            &* &5.99 & 5.99 & 6.00  \\
  \hspace{2.9cm}$\beta\!=\!100$       &$\lVert e \rVert_{\infty}$  & 1.6989e-008 & 3.0331e-009  & 2.6685e-010  & 4.1656e-012\\
                          &{\small Rate}            &* &5.99  & 5.99  & 6.00  \\
\toprule
\end{tabular}
\end{table}

\vspace{-3mm}

Subsequently, some comparisons are made with other algorithms in the literature. In Figure~\ref{fig5.-1} (a), we present the $L^{\infty}$-norm solution error versus the execution time for different schemes. As is shown that the high-order HSAV methods are more effective than other second-order ones. Furthermore, Figure~\ref{fig5.-1} (b)-(d) investigate the errors of invariants of different methods in the long-time behaviour, where we choose $\tau=0.01, T = 20$. As demonstrated in Figure~\ref{fig5.-1} (b) that all the numerical methods can preserve the discrete mass exactly except the SIFD method. Subsequently, we study the conservation of discrete energy during the evolution, and it can be observed from Figure~\ref{fig5.-1} (c) that the HSAV schemes preserve the Hamiltonian energy much better than the SIFD and LIFP methods except the CNFD method, which possess the precise energy conservation law. In particular, Figure~\ref{fig5.-1} (d) demonstrates that the proposed schemes preserve the quadratic energy exactly, which conforms the preceding theoretical analysis.

\begin{figure}[H]
\begin{minipage}{0.48\linewidth}
  \centerline{\includegraphics[height=5.4cm,width=0.88\textwidth]{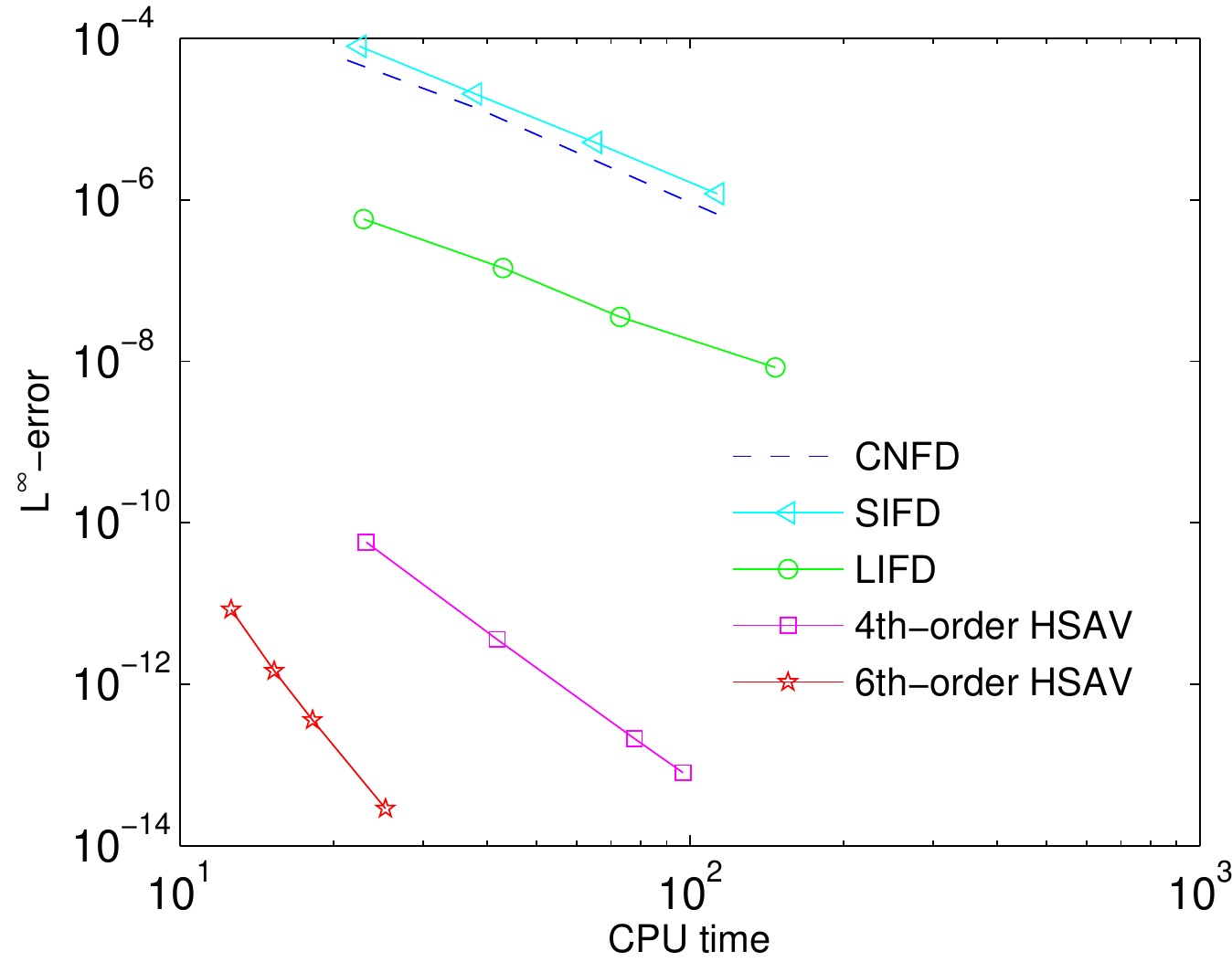}}
  \centerline{\footnotesize  (a) Solution error vs. execution time}
  \label{fig1.-33}
\end{minipage}
\hfill
\begin{minipage}{0.48\linewidth}
  \centerline{\includegraphics[height=5.4cm,width=0.88\textwidth]{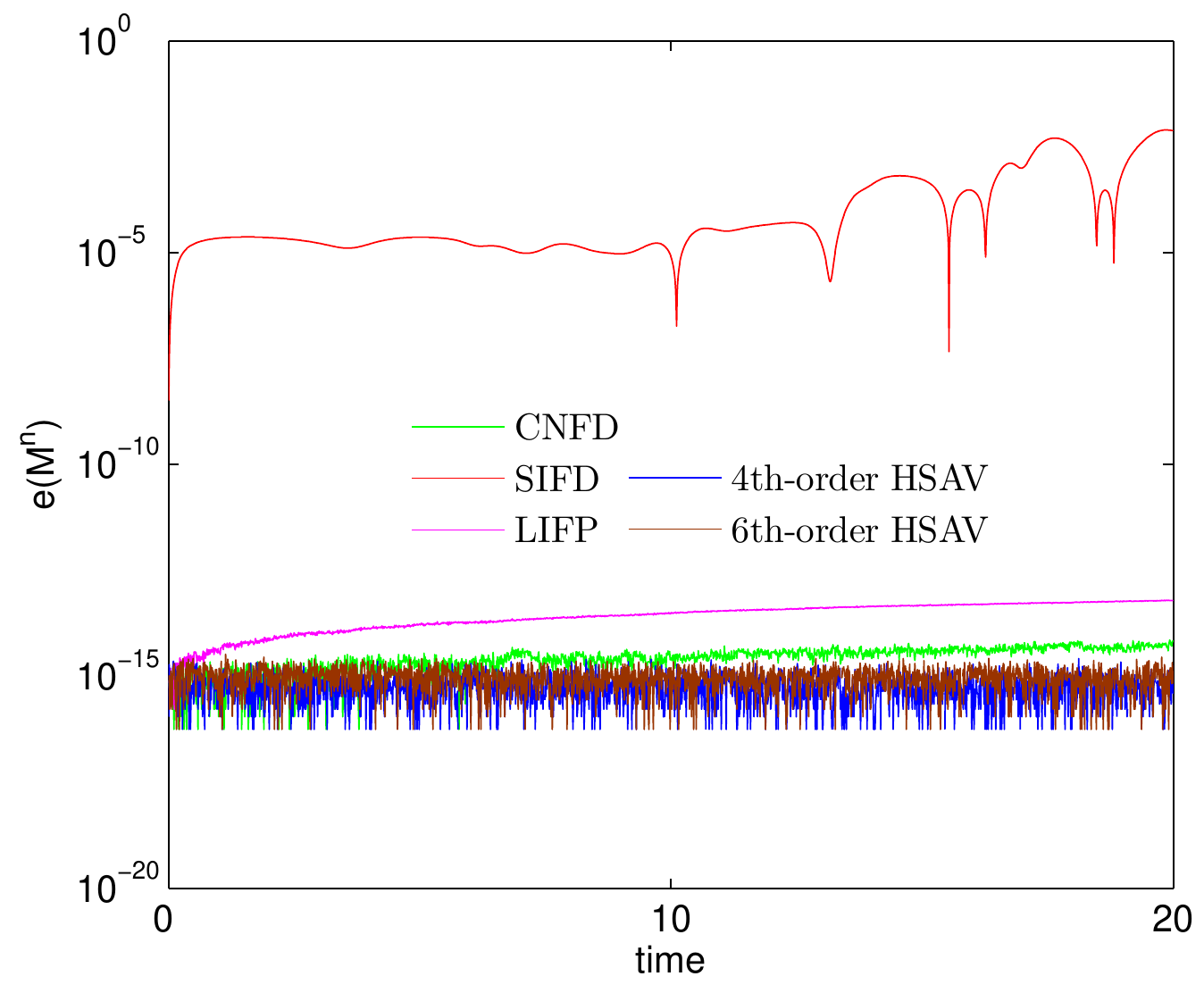}}
  \centerline{  \footnotesize(b) Mass error}
   \label{fig1.-33}
\end{minipage}
\end{figure}
\vspace{-2mm}
\begin{figure}[H]
\begin{minipage}{0.48\linewidth}
  \centerline{\includegraphics[height=5.4cm,width=0.87\textwidth]{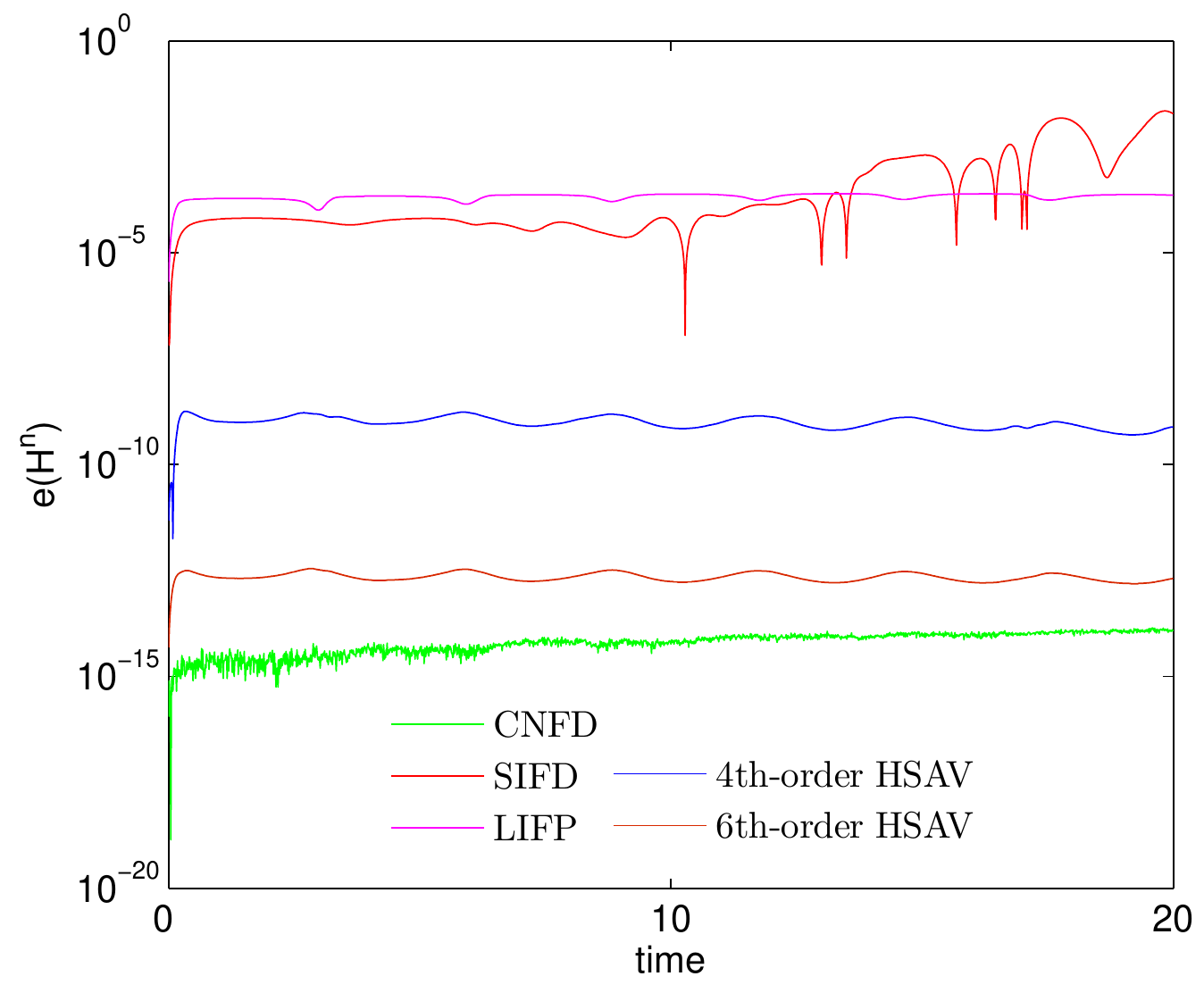}}
  \centerline{  \footnotesize(c) Hamiltonian energy error}
  \label{fig1.-33}
\end{minipage}
\hfill
\begin{minipage}{0.48\linewidth}
  \centerline{\includegraphics[height=5.2cm,width=0.9\textwidth]{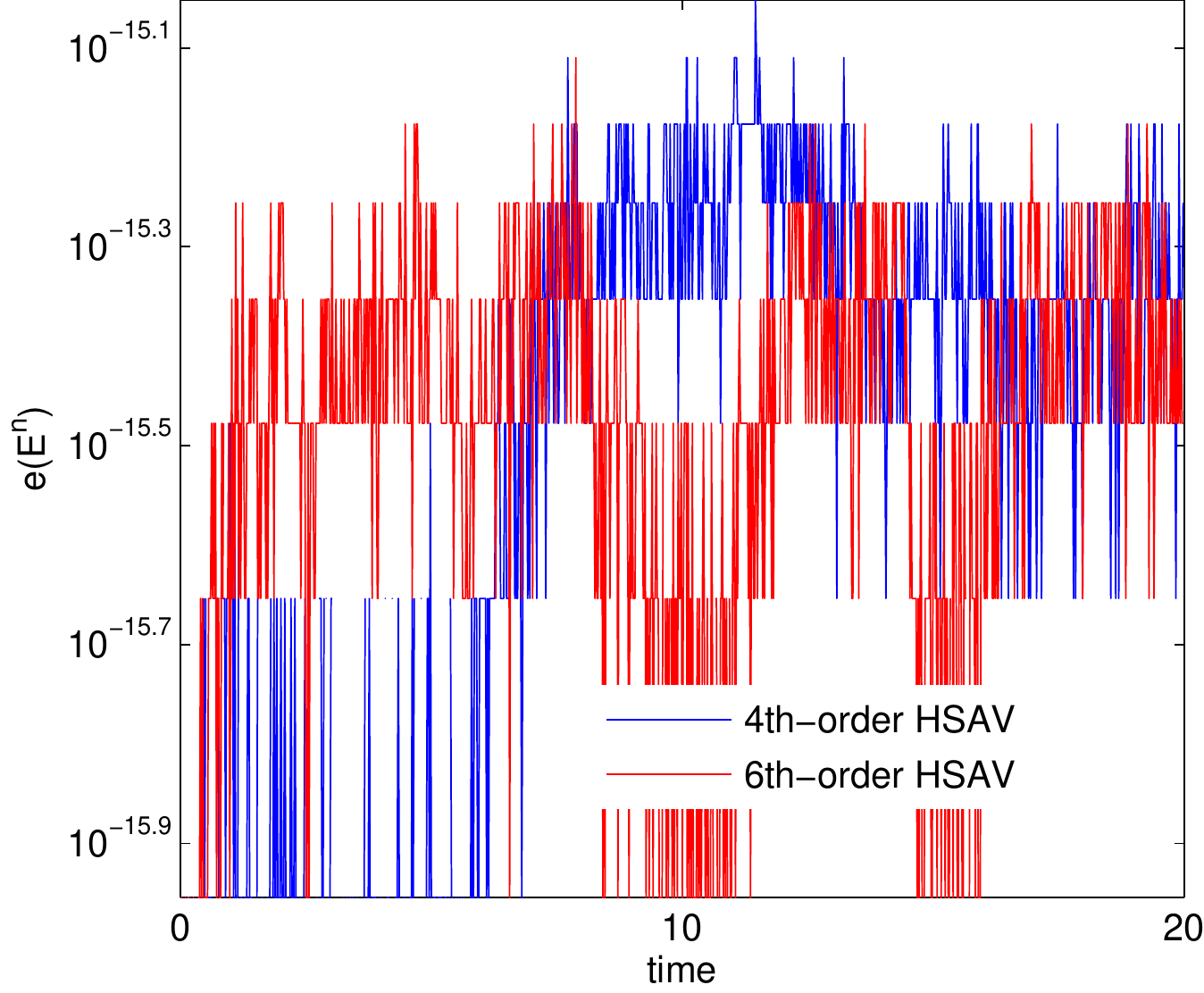}}
  \centerline{\footnotesize  (d) Quadratic energy error}
   \label{fig1.-33}
\end{minipage}
\caption{\footnotesize Comparisons of different numerical schemes }
\label{fig5.-1}
\end{figure}

\noindent \textbf{Example 6.2} In this example, we further consider the dynamics of the 2D GP equation starting from a quantized vortex lattice for rotating BECs~\cite{Baow}, i.e. we here choose $\Omega=0.9, \beta=1000$, $V(x,y)=\frac{1}{2}(x^2+y^2)$ and the spatial domain $\mathcal D=[-16,16]^2$ with mesh size $h=32/128$. The ground state is computed numerically by the backward Euler pseudo-spectral (BESP) method,  provided in the GPELab program~\cite{antoine14}, with the same parameter values and $\gamma_x=1,\gamma_y=1$.

Then, the dynamics of the vortex lattice is investigated numerically by perturbing the harmonic potential $V(x,y)=\frac{1}{2}(\gamma_x^2x^2+\gamma_y^2y^2)$ with different parameters: (i) case $\rm \MyRoman{1}$: $\gamma_x=\gamma_y=1.4$, (ii) case $\rm \MyRoman{2}$: $\gamma_x=1.1$, and $\gamma_y=0.9$, respectively. For brevity, we only present the contour plots of the density
function $|\psi|^2$ for the dynamics of vortex lattices computed by 4th-order HSAV method, and that of 6th-order counterpart is similar apparently. From Figure~\ref{fig5.-21}, we can observe massive quantized vortices in the ground state at $t=0$. During the time evolution,  the lattice structures are all conserved due to the high accuracy and efficiency of the proposed methods, and the lattice shrinks or expands on account of the changing of the trapping frequencies. Meanwhile, the vortex lattice is clearly observed to rotate clockwise around the center. On the other hand, because of the increase and decrease of $\gamma_x$ and $\gamma_y$ in case $\rm \MyRoman{2}$,  the condensate in Figure~\ref{fig5.-22} is observed to contract and expand in $x$- and $y$-directions, respectively.

Moreover, we inspect the long-time behaviour by carrying out a large time period. As shown in Figure~\ref{fig5.-23} that the proposed schemes can preserve the discrete mass and quadratic energy precisely, and 6th-order HSAV scheme performs more accurate than 4th-order counterpart in terms of the Hamiltonian energy, which conforms the preceding theoretical analysis again.

\begin{figure}[H]
    \centering
             \includegraphics[height=3.5cm,width=0.23\linewidth]{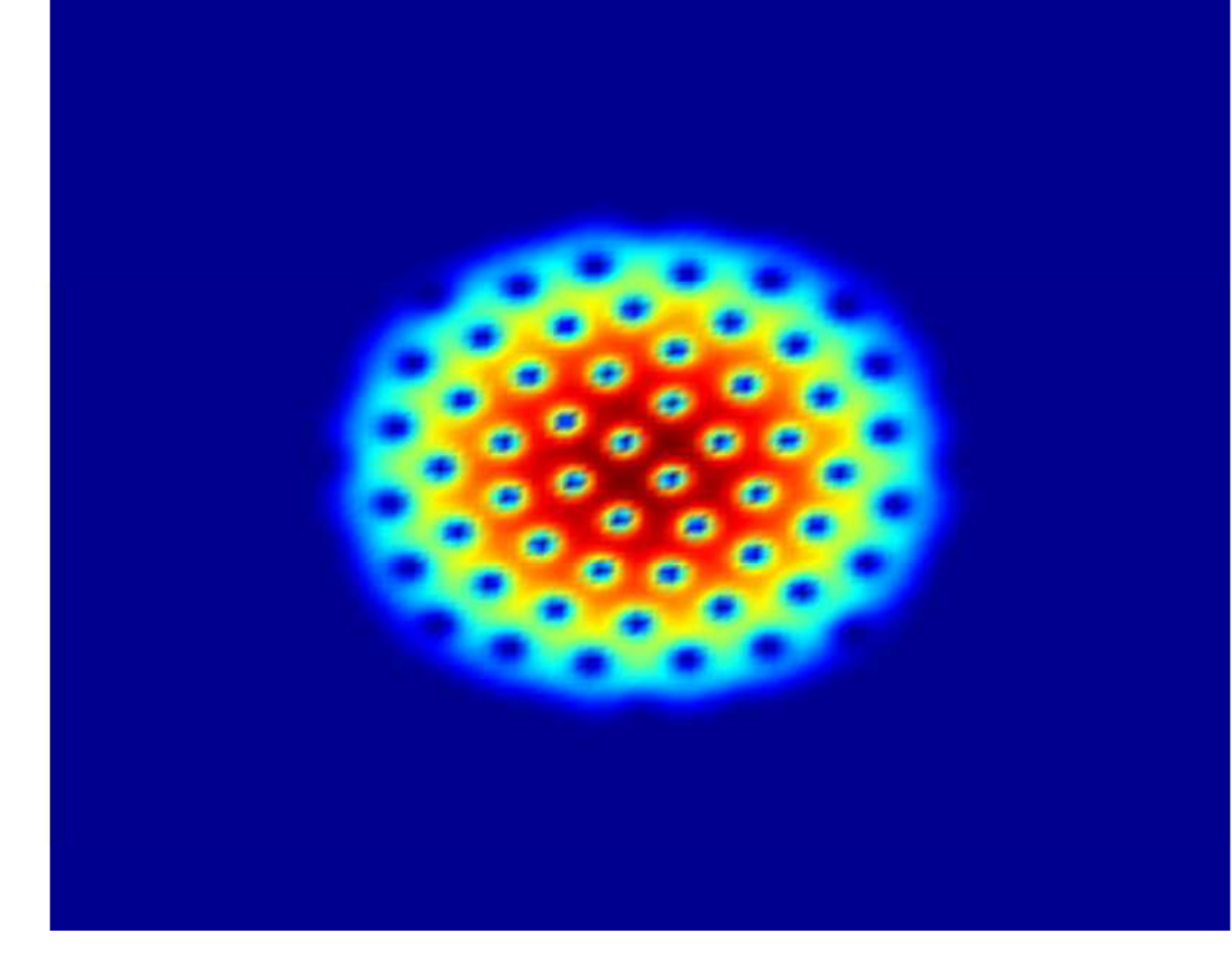}
             \includegraphics[height=3.5cm,width=0.23\linewidth]{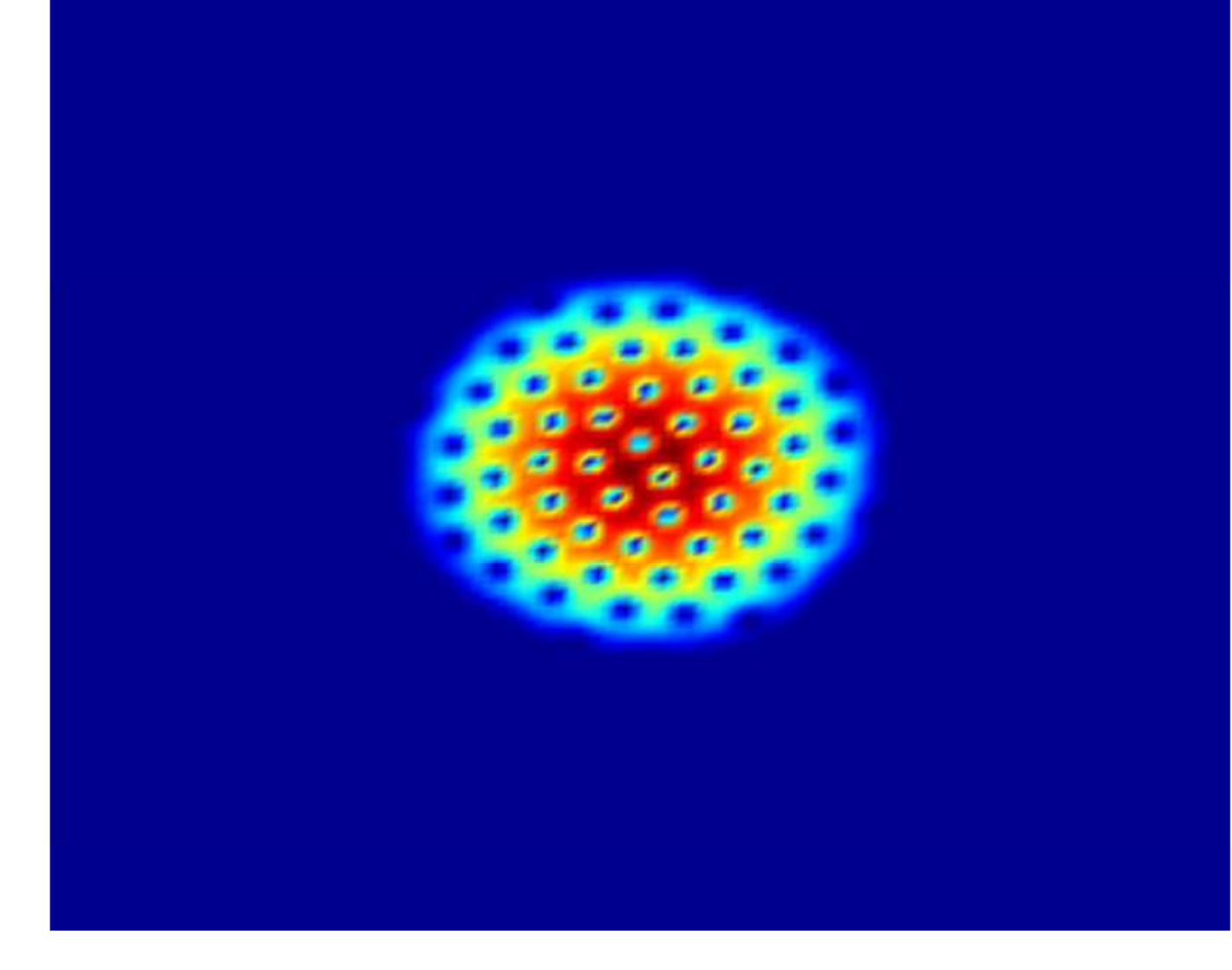}
	         \includegraphics[height=3.5cm,width=0.23\linewidth]{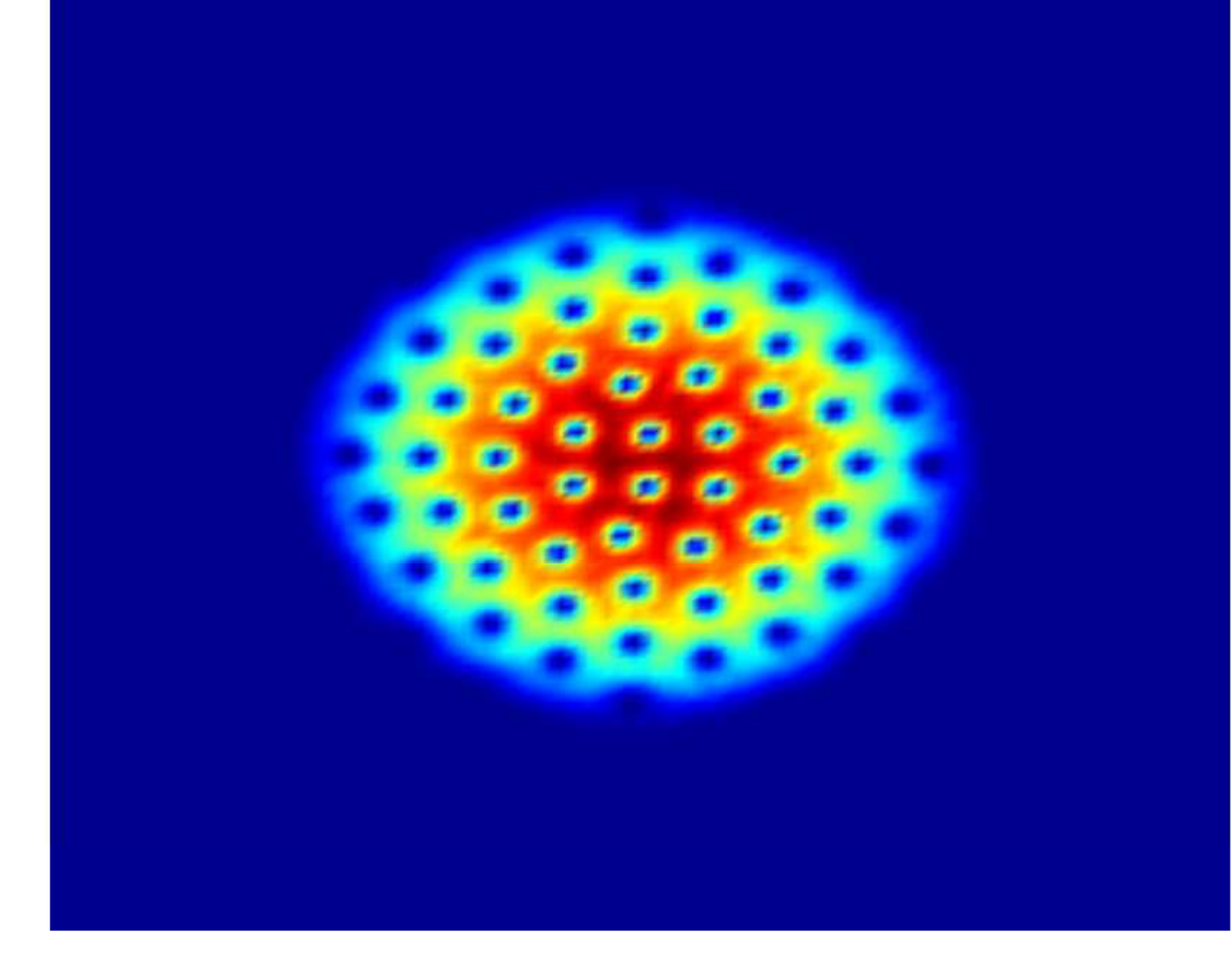}
             \includegraphics[height=3.5cm,width=0.23\linewidth]{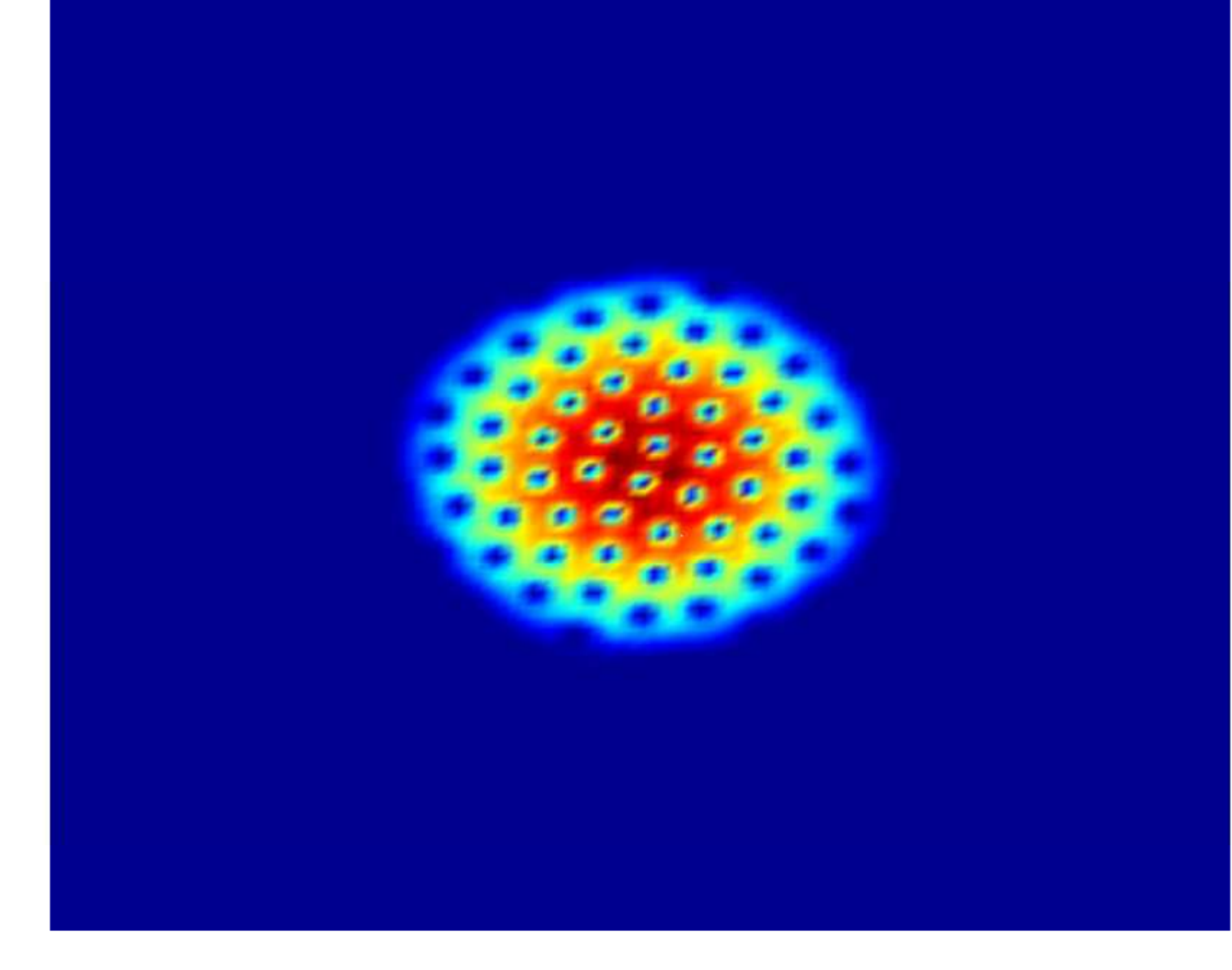}\\
	         ~\includegraphics[height=3.5cm,width=0.23\linewidth]{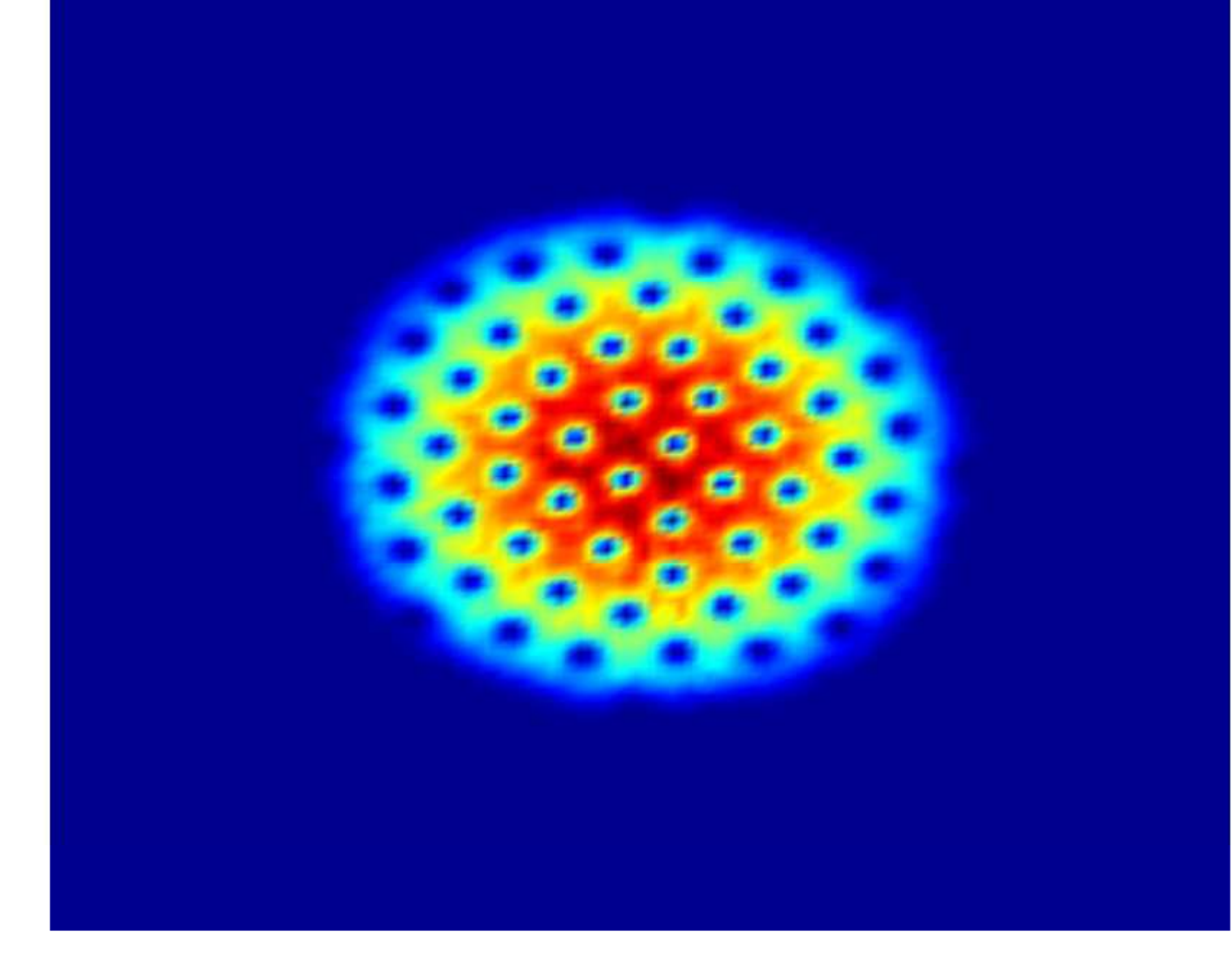}
             \includegraphics[height=3.5cm,width=0.23\linewidth]{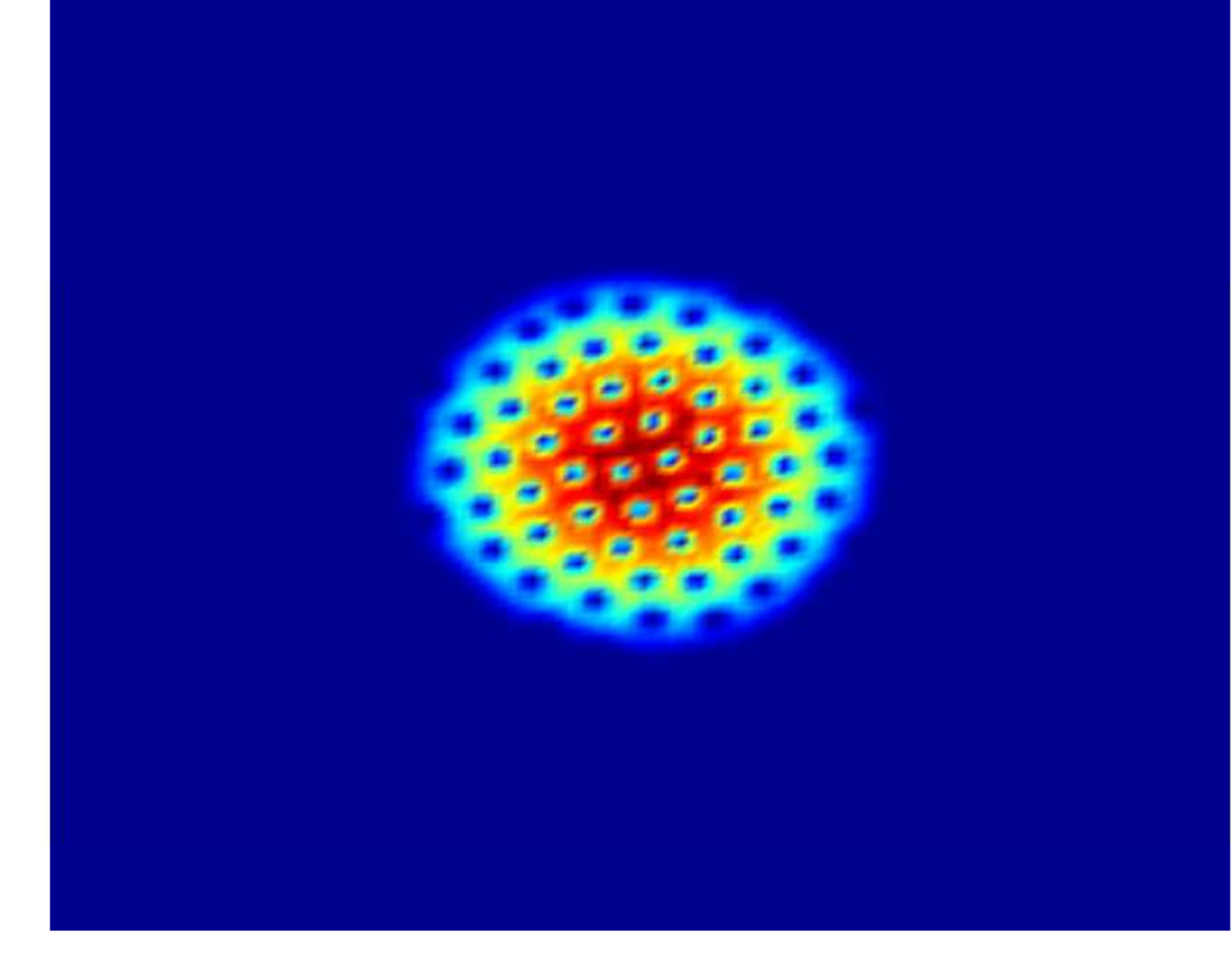}
	         \includegraphics[height=3.5cm,width=0.23\linewidth]{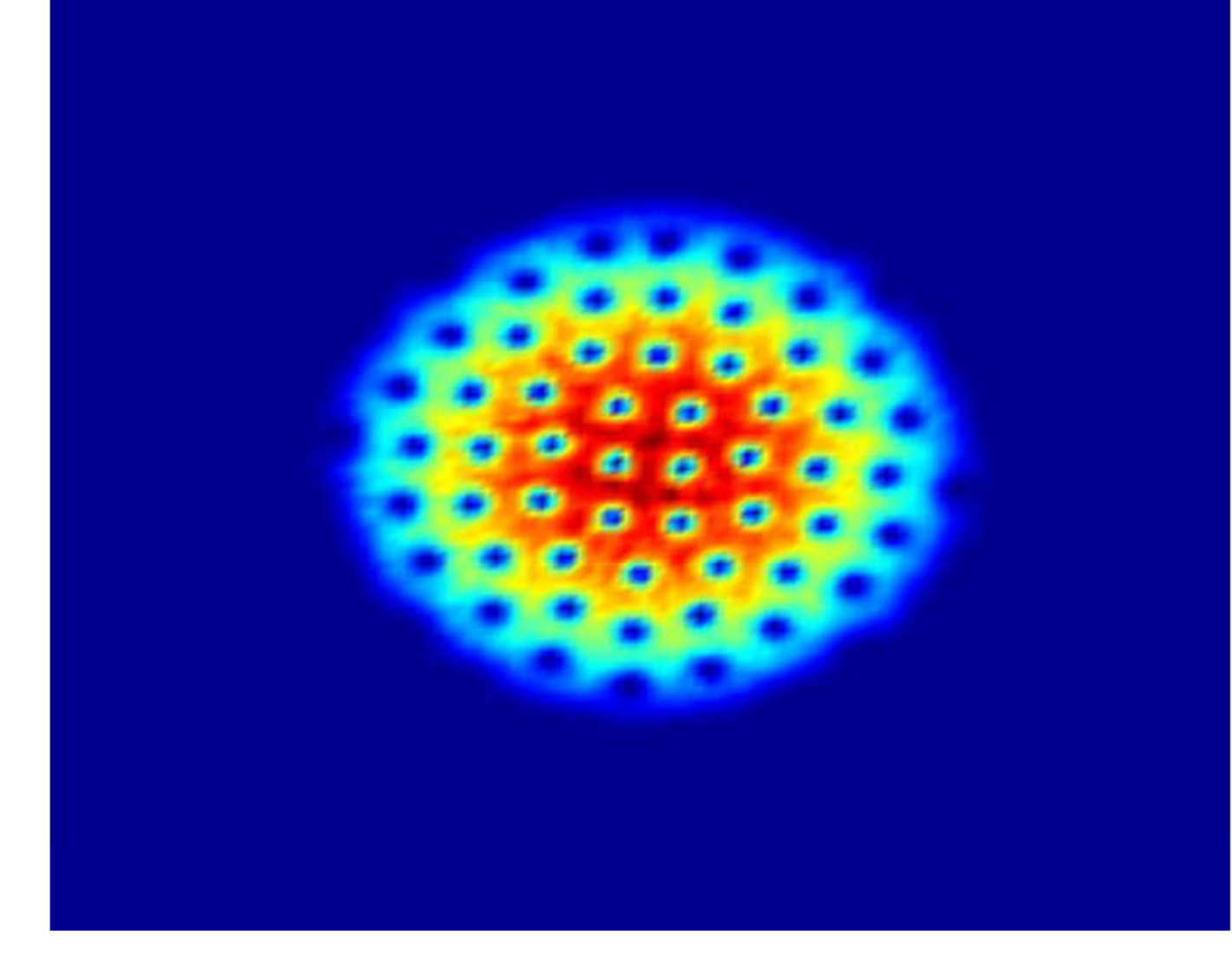}
             \includegraphics[height=3.5cm,width=0.23\linewidth]{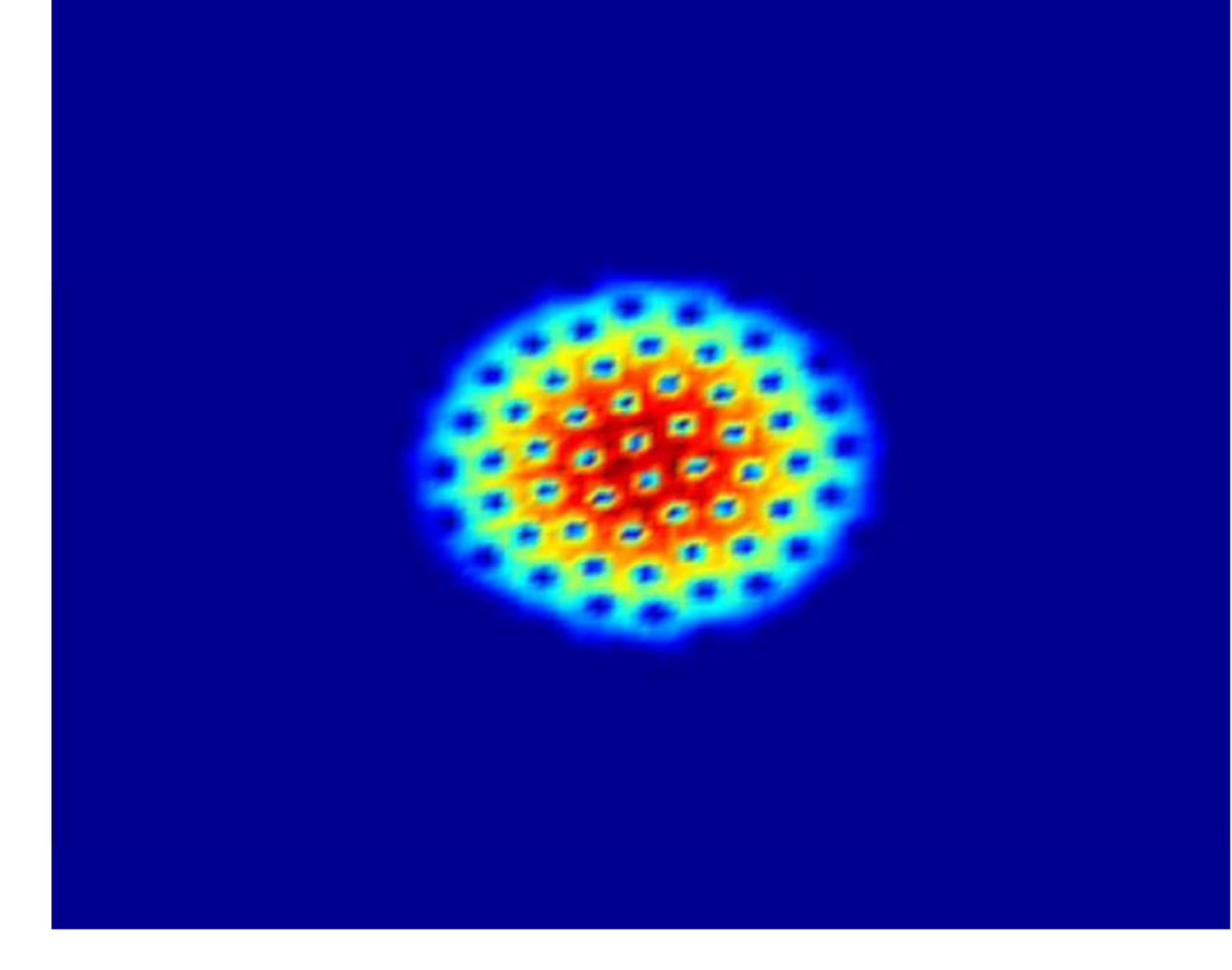}
\setlength{\abovecaptionskip}{-0.5mm}
	\caption{\footnotesize Contour plots of the density function $|\psi|^2$ for the dynamics of  vortex lattices in a 2D rotating BEC at different times $t=0,1,2.2,3.2,4.4,5.6,6.6,10$ with $\gamma_x=\gamma_y=1.4 $ (in order from left to right and from top to bottom).}
\label{fig5.-21}
 \end{figure}

\vspace{-5mm}

\begin{figure}[H]
\centering
    \includegraphics[height=3.5cm,width=0.23\linewidth]{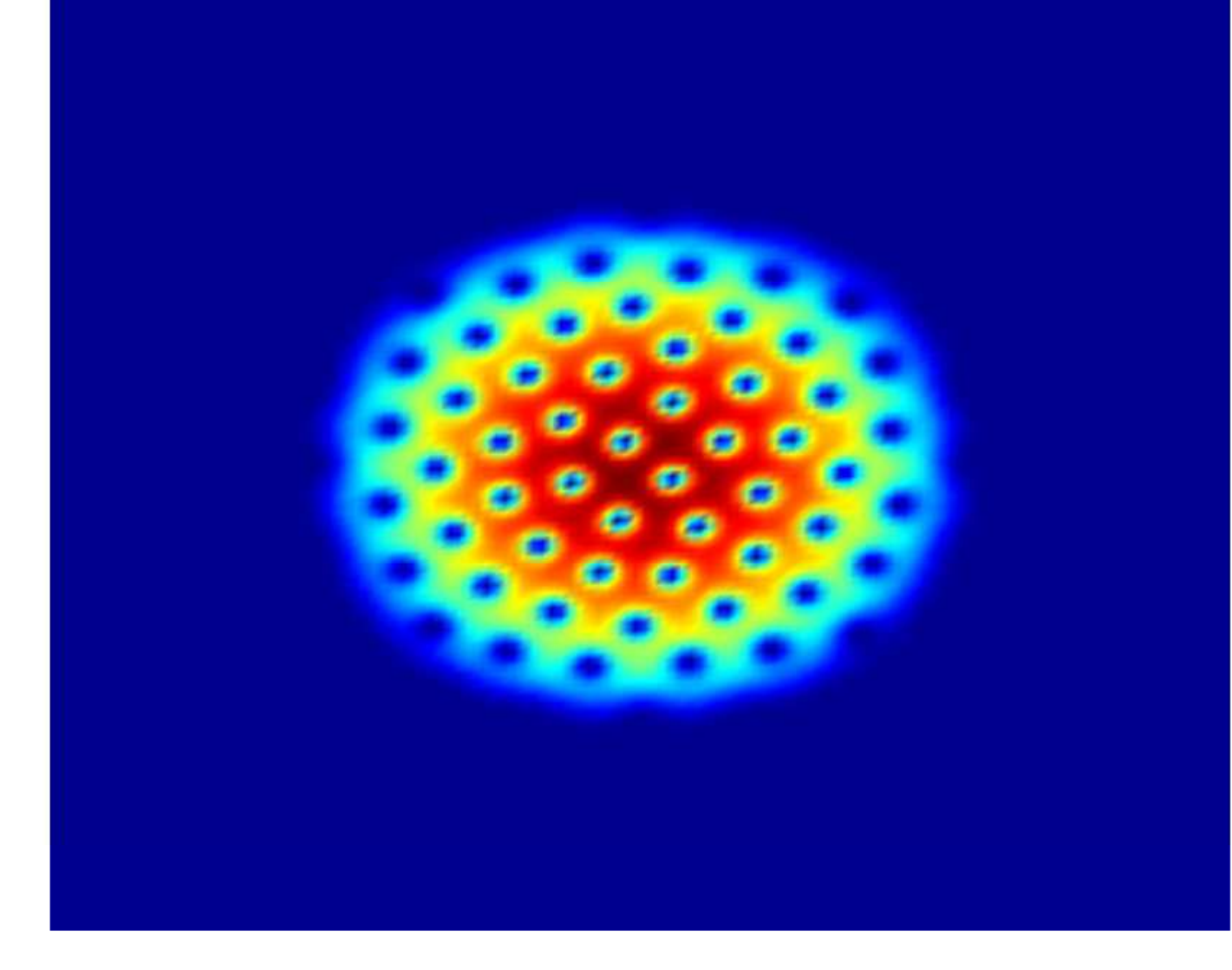}
    \includegraphics[height=3.5cm,width=0.23\linewidth]{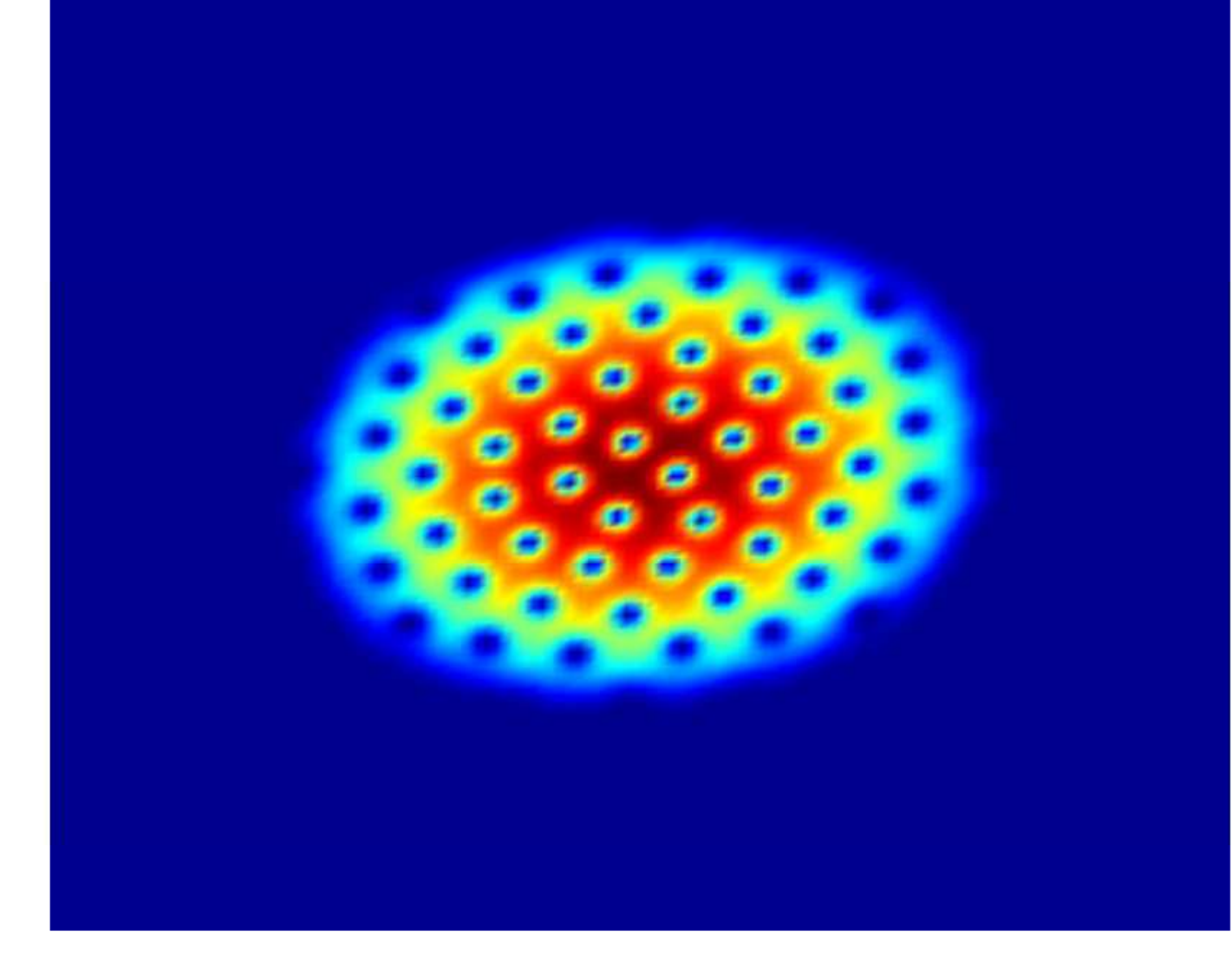}
    \includegraphics[height=3.5cm,width=0.23\linewidth]{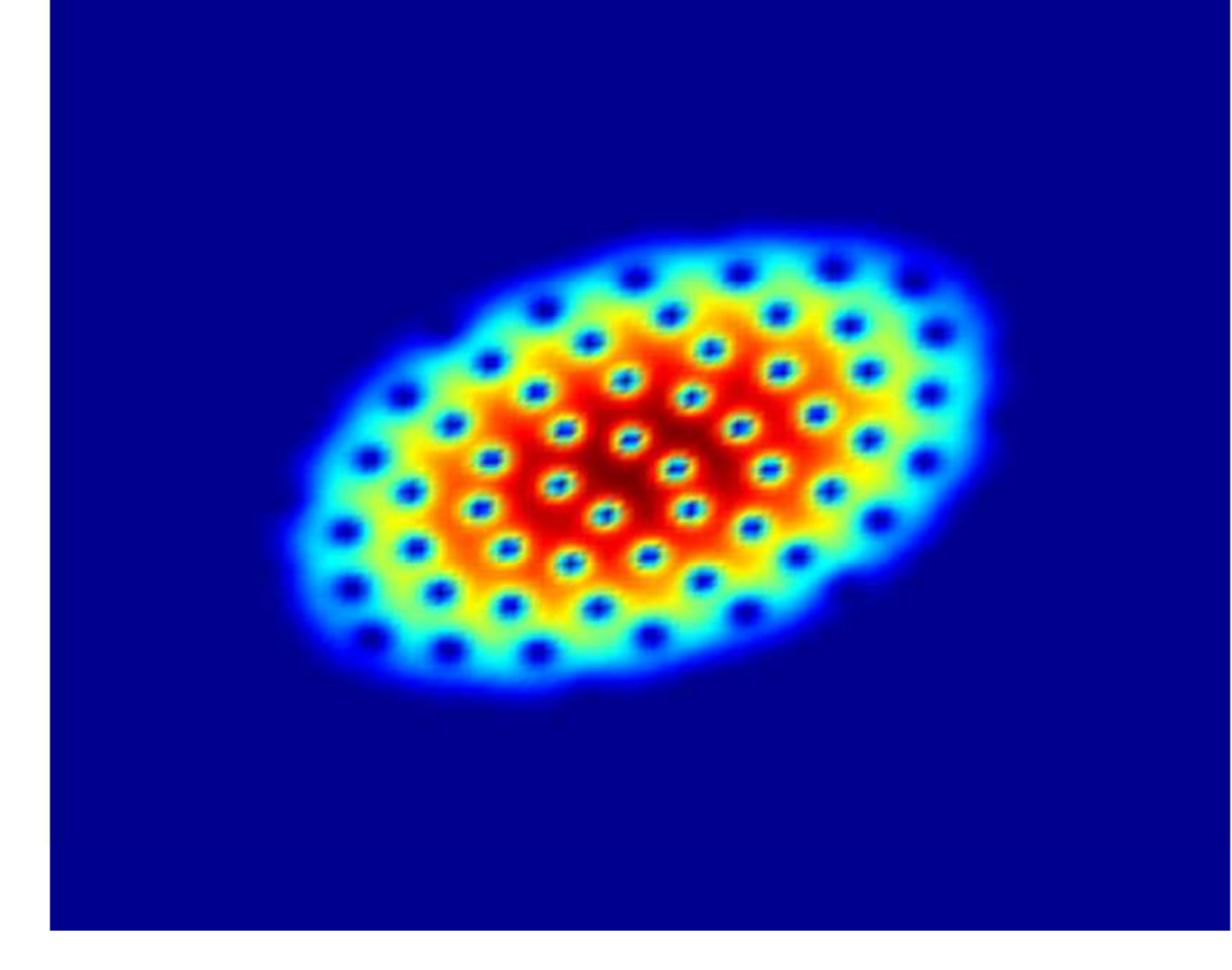}
	\includegraphics[height=3.5cm,width=0.23\linewidth]{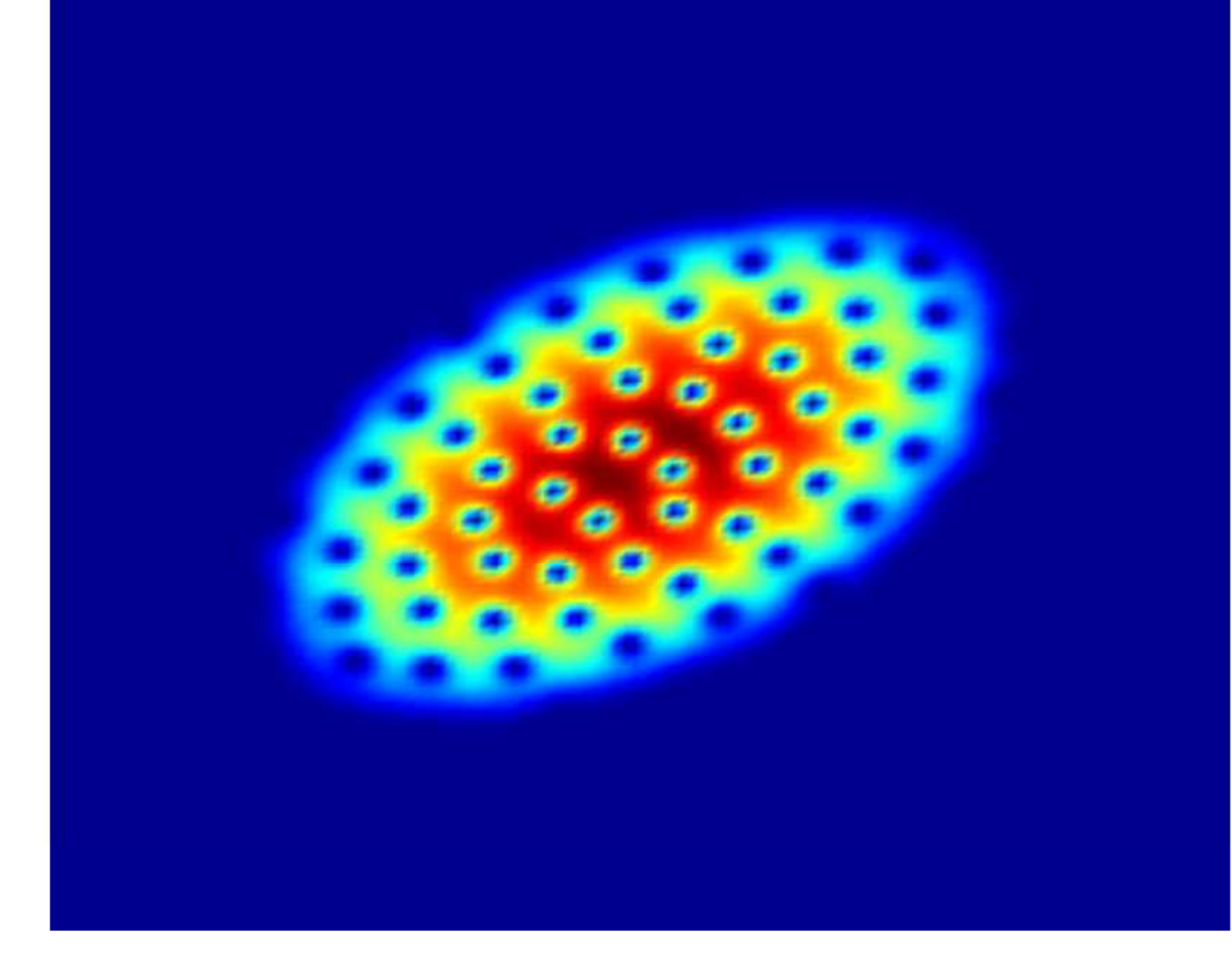}\\
	~\includegraphics[height=3.5cm,width=0.23\linewidth]{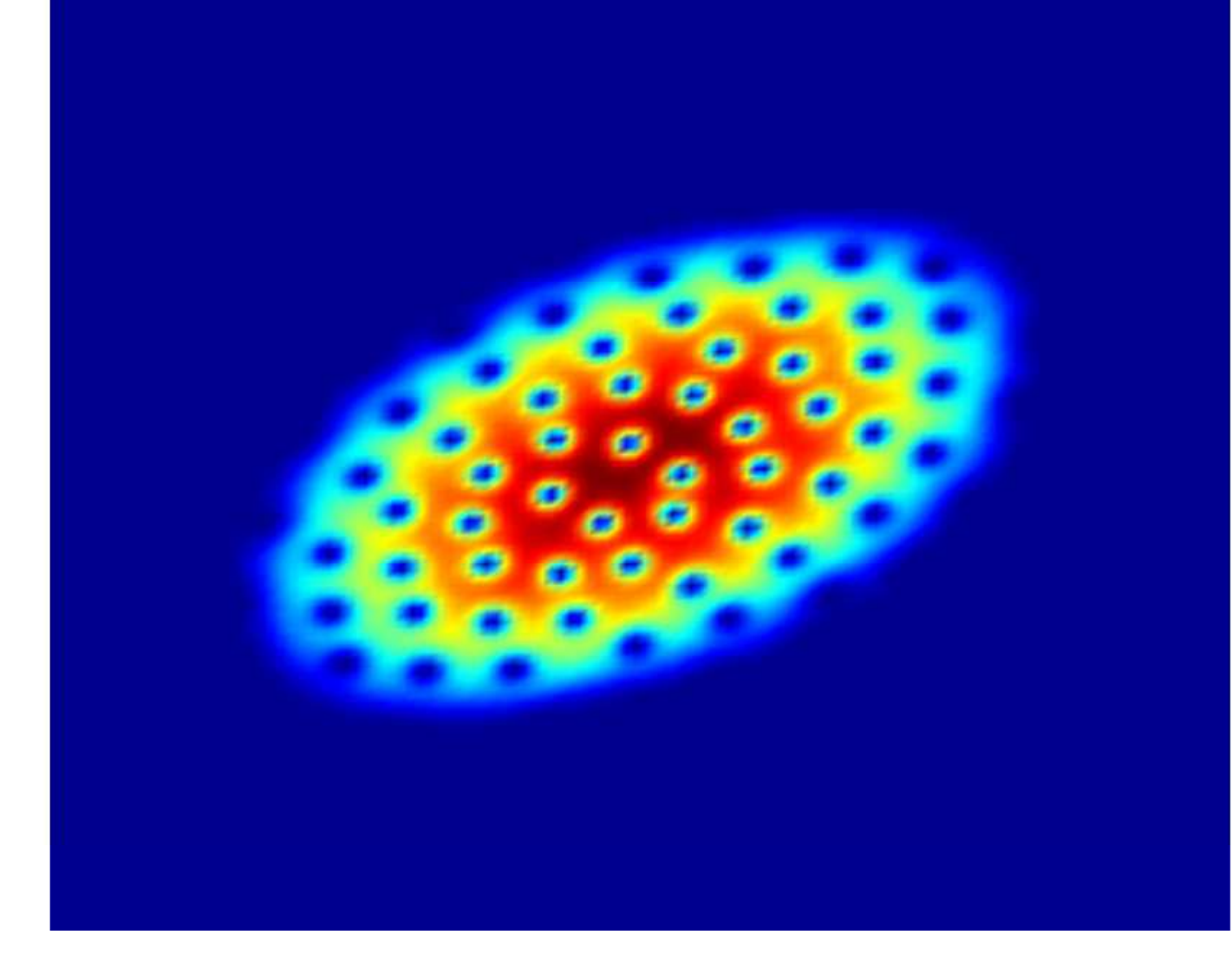}
    \includegraphics[height=3.5cm,width=0.23\linewidth]{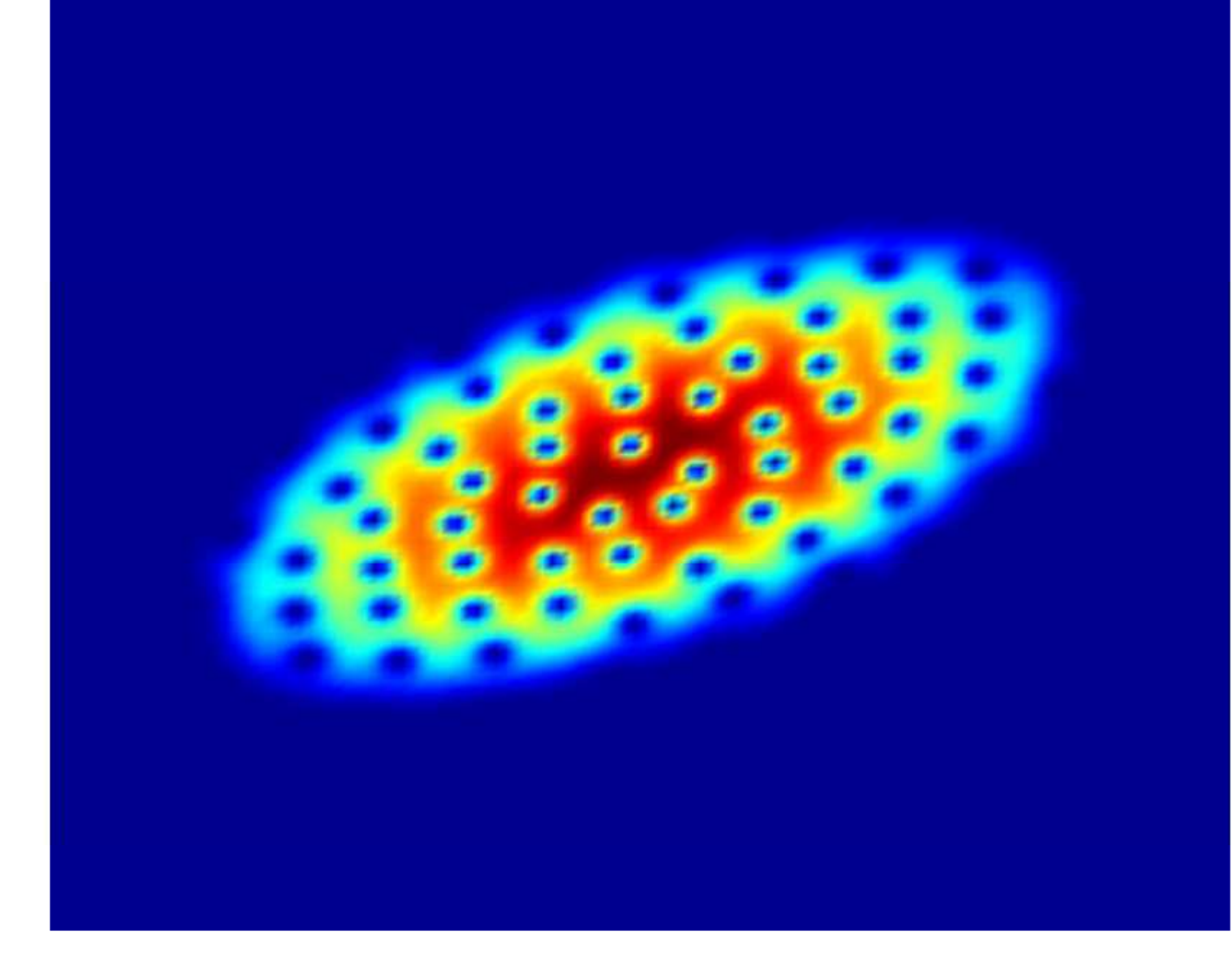}
	\includegraphics[height=3.5cm,width=0.23\linewidth]{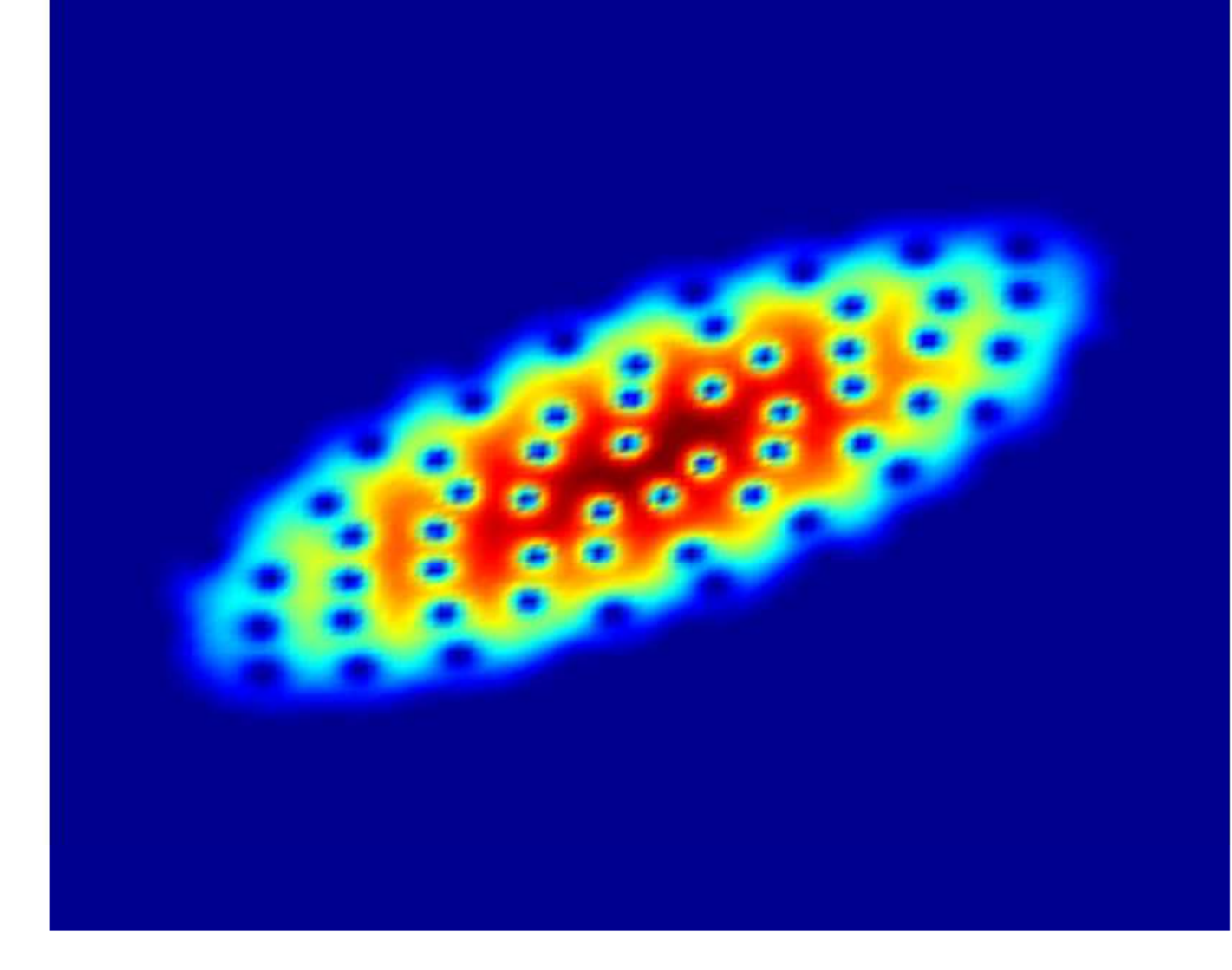}
    \includegraphics[height=3.5cm,width=0.23\linewidth]{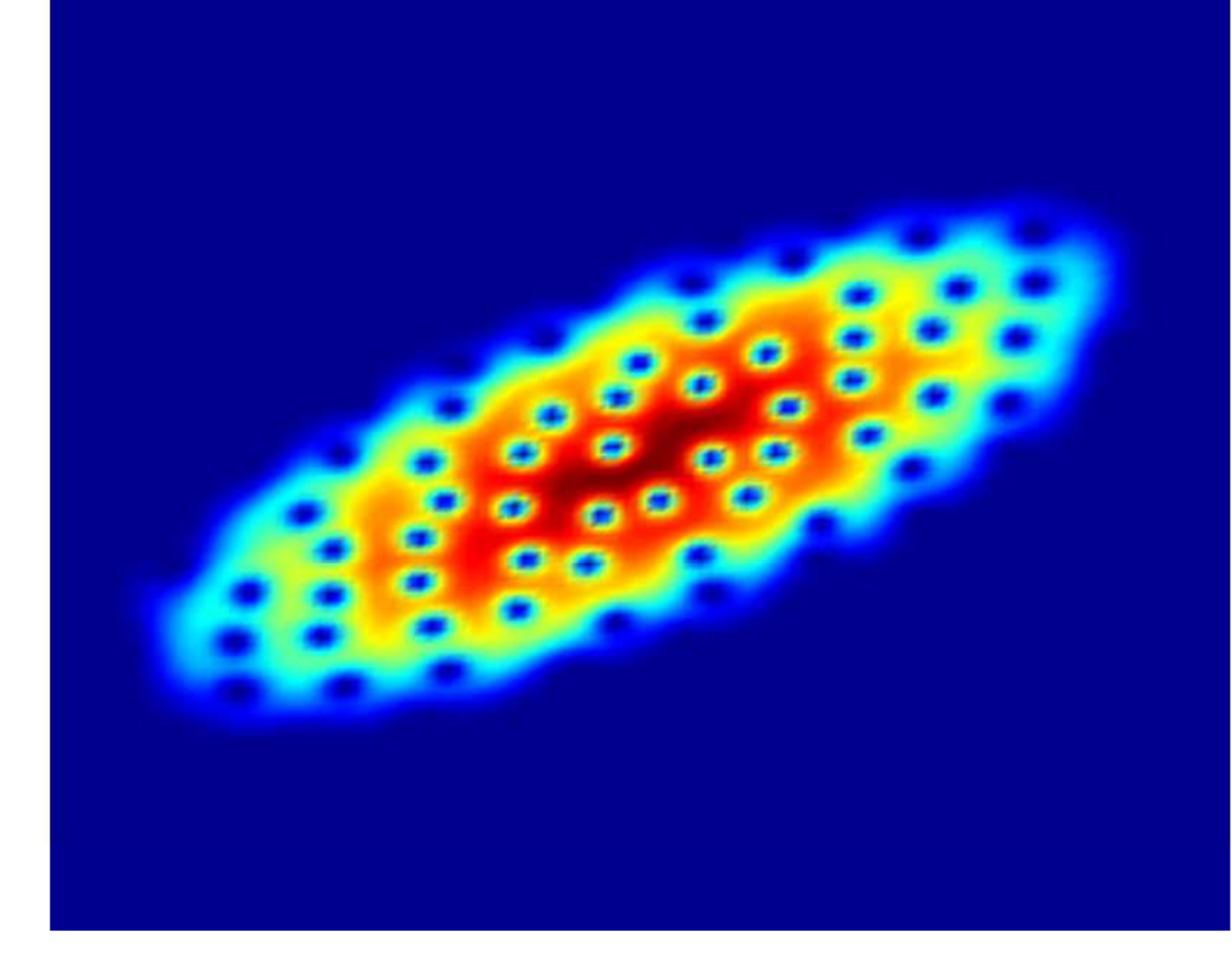}
\setlength{\abovecaptionskip}{-0.5mm}
	\caption{\footnotesize Contour plots of the density function $|\psi|^2$ for the dynamics of  vortex lattices in a 2D rotating BEC at different times $t=0,1,1.8,2.6,3.4,4.2,5,6$ with $\gamma_x=1.1$ and $\gamma_y=0.9 $ (in order from left to right and from top to bottom).}
	\label{fig5.-22}
\label{fig5.-2}
\end{figure}

\begin{figure}[H]
\begin{minipage}{0.48\linewidth}
  ~\centerline{\includegraphics[height=5.5cm,width=0.88\textwidth]{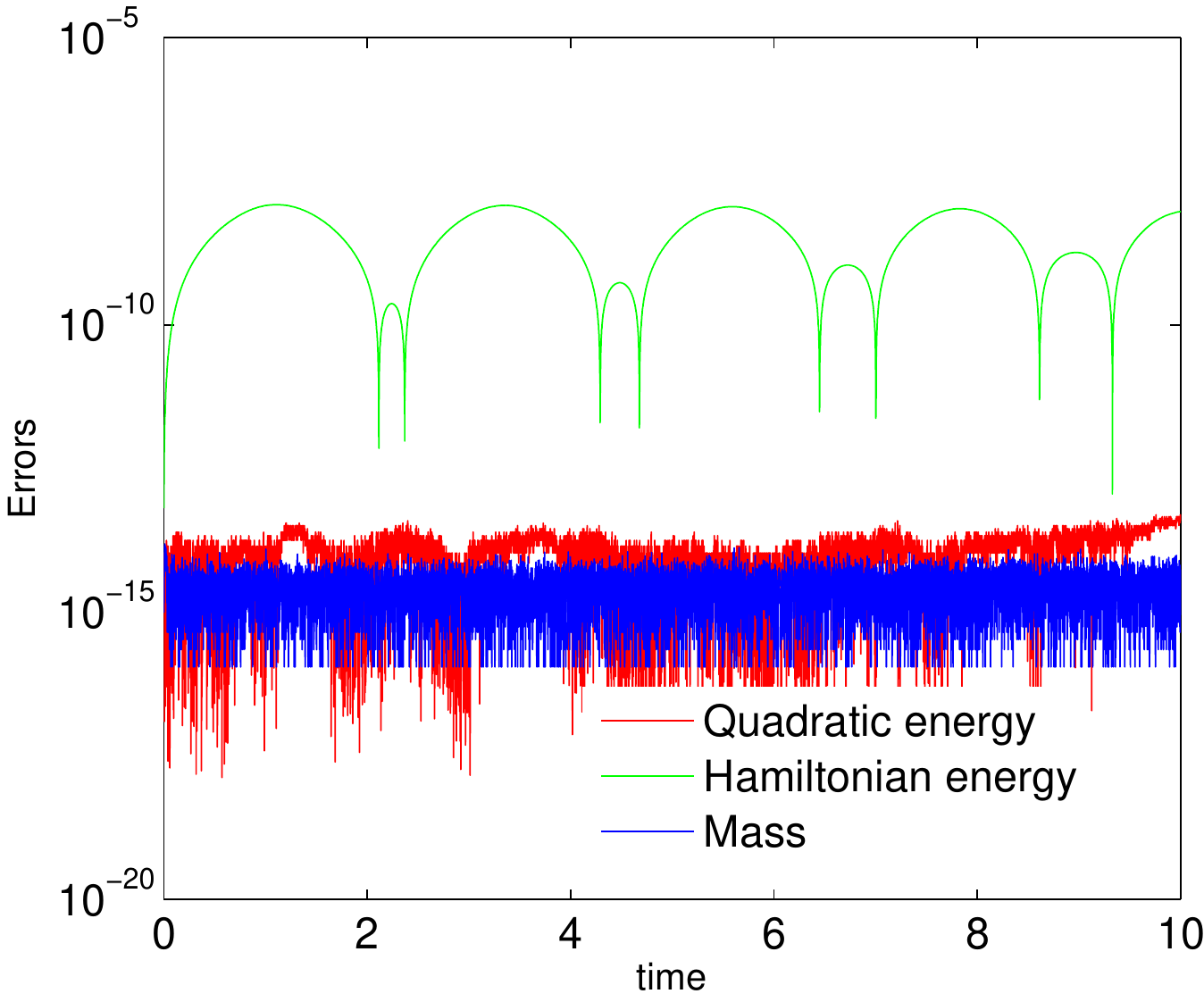}}
  \centerline{\footnotesize   (a) 4th-order HSAV for case $\rm \MyRoman{1}$}
\end{minipage}
\hfill
\begin{minipage}{0.48\linewidth}
  \centerline{\includegraphics[height=5.5cm,width=0.88\textwidth]{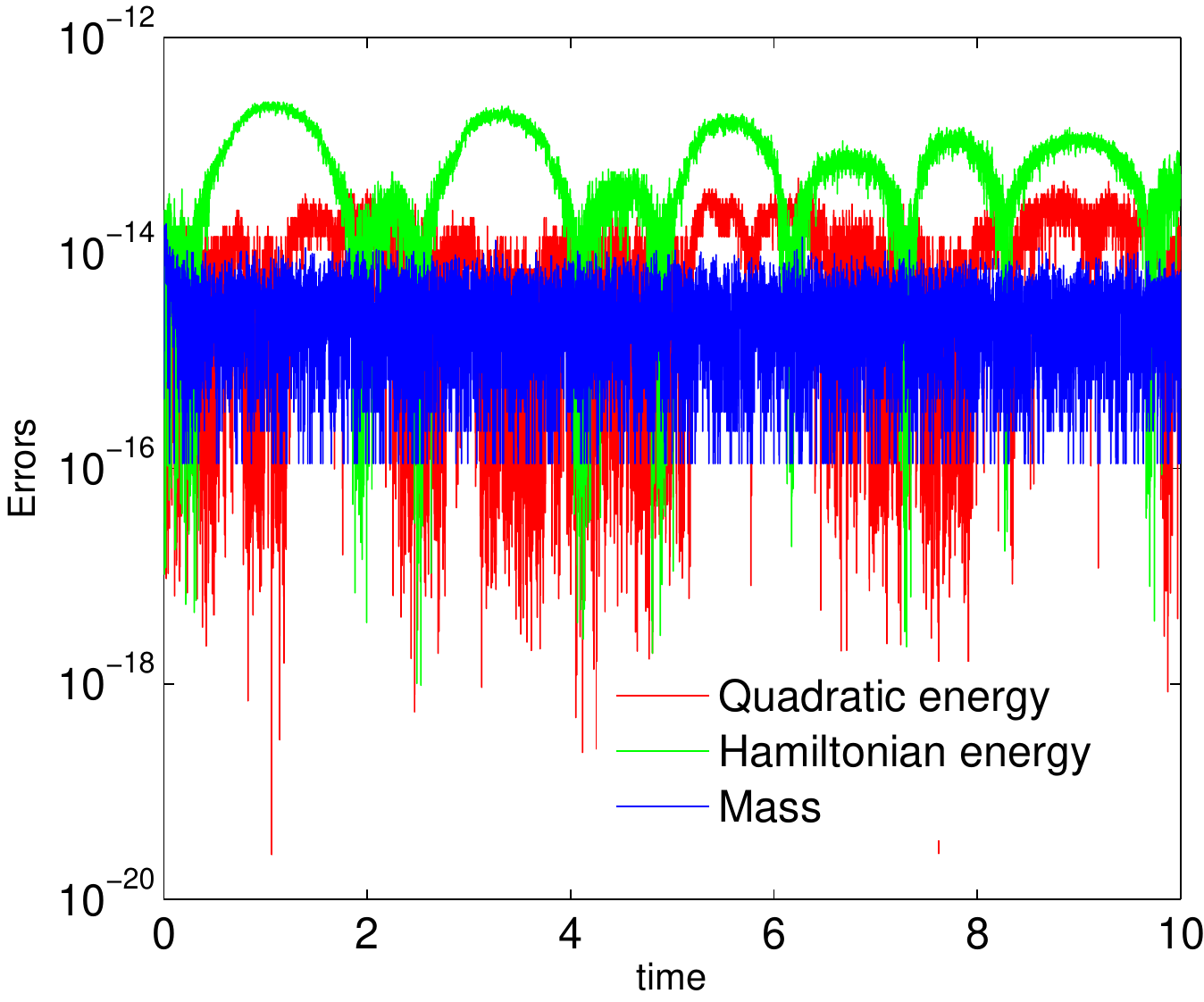}}
  \centerline{\footnotesize   (b) 6th-order HSAV for case $\rm \MyRoman{1}$}
\end{minipage}
\end{figure}

\begin{figure}[H]
\begin{minipage}{0.48\linewidth}
  \centerline{~~\includegraphics[height=5.5cm,width=0.88\textwidth]{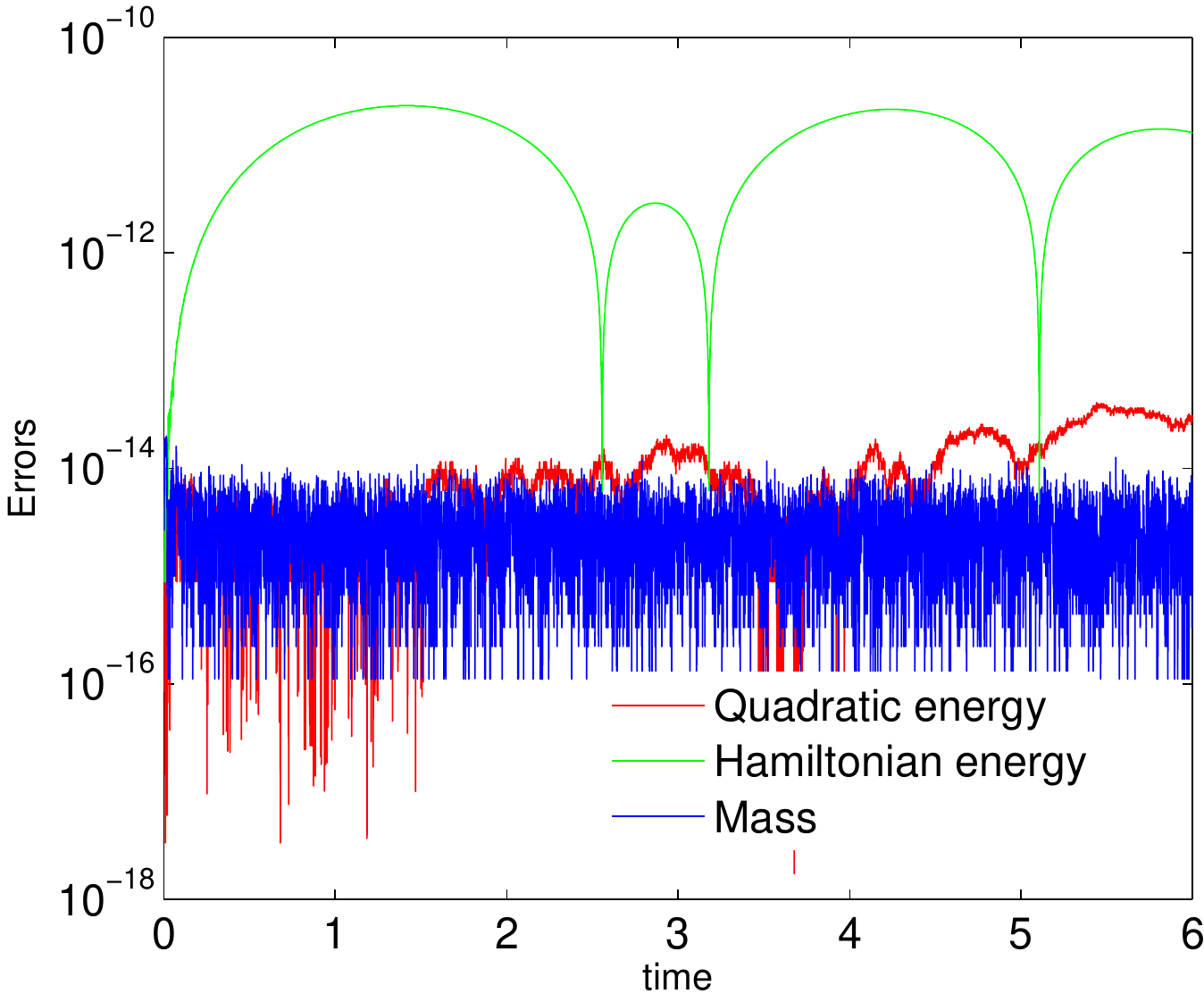}}
  \vspace{-0.4mm}
  \centerline{\footnotesize  (c) 4th-order HSAV for case $\rm \MyRoman{2}$}
\end{minipage}
\hfill
\begin{minipage}{0.48\linewidth}
  \centerline{\includegraphics[height=5.5cm,width=0.88\textwidth]{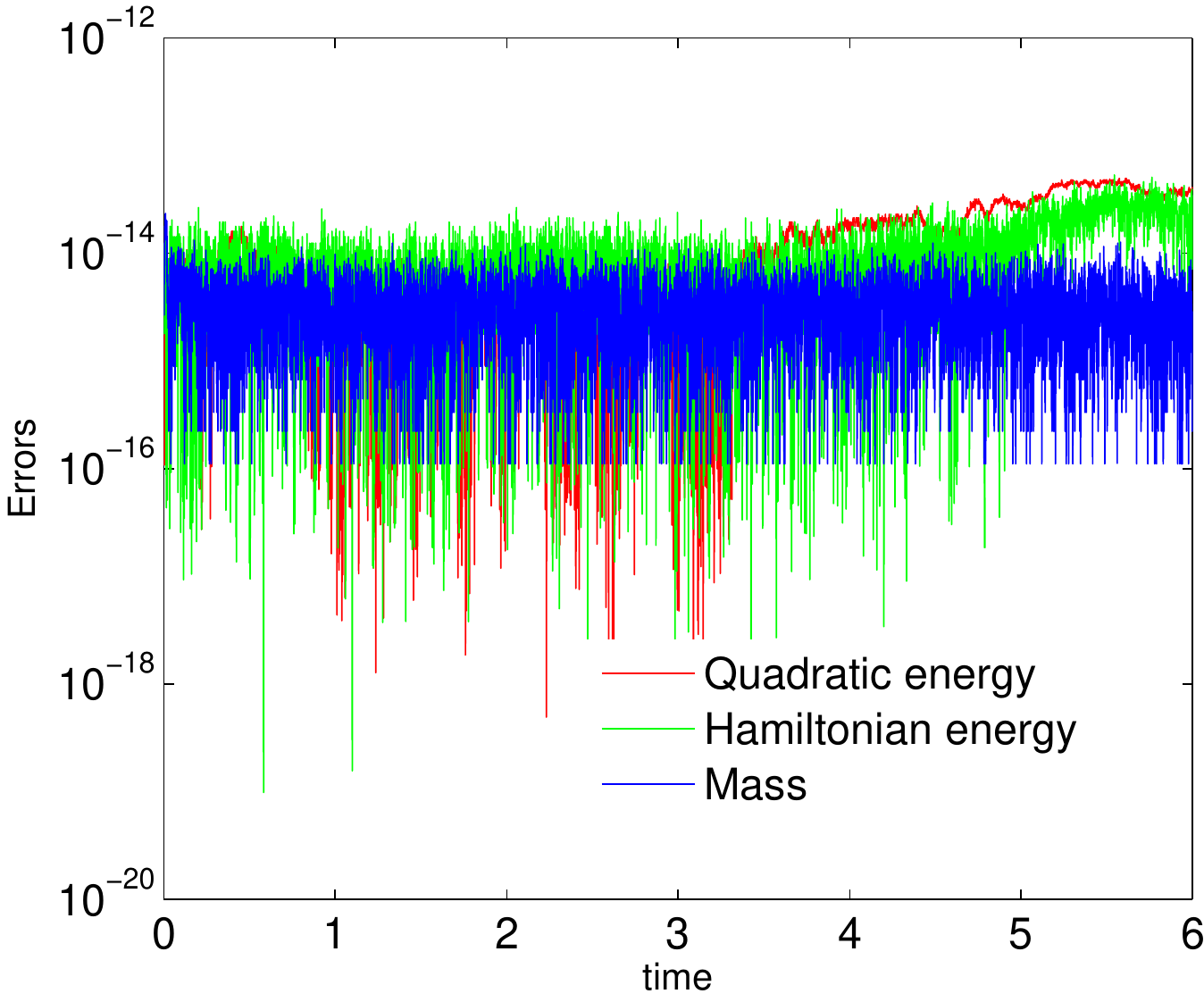}}
   \vspace{-0.4mm}
  \centerline{\footnotesize  (d) 6th-order HSAV for case $\rm \MyRoman{2}$}
\end{minipage}
\vspace{0.9mm}
  \setlength{\abovecaptionskip}{-0.00mm}
\caption{\small Evolution of discrete mass and energy with $\tau=0.001, \beta=1000$ and $\Omega=0.9$.}
\label{fig5.-23}
\end{figure}

\vspace{-0.35cm}

\noindent \textbf{Example 6.3} In this example, we further simulate dynamics of vortex lines in a 3D rotating BEC, and then choose the ground state computed by the BESP method~\cite{antoine14}. For the GP equation~\eqref{eq1.1}, we take $\beta=400, \Omega=0.8, V=x^2+y^2+\frac{z^2}{2}$, and the spatial domain $\mathcal {D}=[-10,10]^3$ with mesh size $h=20/64$.

From Figure~\ref{fig5.-31}, we can clearly observe the initial stationary vortex profiles from different angles, as well as the phase of the ground state in the $(x,y)$-plane. In the subsequent simulations, we only show the contour plots of the density
function $|\psi|^2$ for the dynamics of vortex lines computed by 4th-order HSAV method, and 6th-order counterpart performs similarly.
Figures~\ref{fig5.-32} and~\ref{fig5.-33} illustrate the contour plots from the angle of Figure~\ref{fig5.-31} (a) for the dynamics at different time levels. It is demonstrated that the proposed methods can resolve the 3D GP problem very well because of their high accuracy and efficiency, and the vortex structure is also conserved during the dynamics. It is interesting to find from Figure~\ref{fig5.-32} that the lattice shrinks or expands periodically in the vertical direction. In particular, eight vortex lines of the lattice shown in Figure~\ref{fig5.-33} are clearly observed to rotate counterclockwise around the $z$-axis from above.
Moreover, we study the long-time behaviour by carrying out a large time period $T=20$. As is shown in Figure~\ref{fig5.-34} that the discrete mass and quadratic energy are conserved precisely. What's more, the quadratic energy of the 6th-order HSAV method is preserved up to machine accuracy, as Figure~\ref{fig5.-34} (b) demonstrates.

\vspace{-0.5cm}

\begin{figure}[H]
\begin{minipage}{0.48\linewidth}
  \centerline{\includegraphics[height=5.4cm,width=0.53\textwidth]{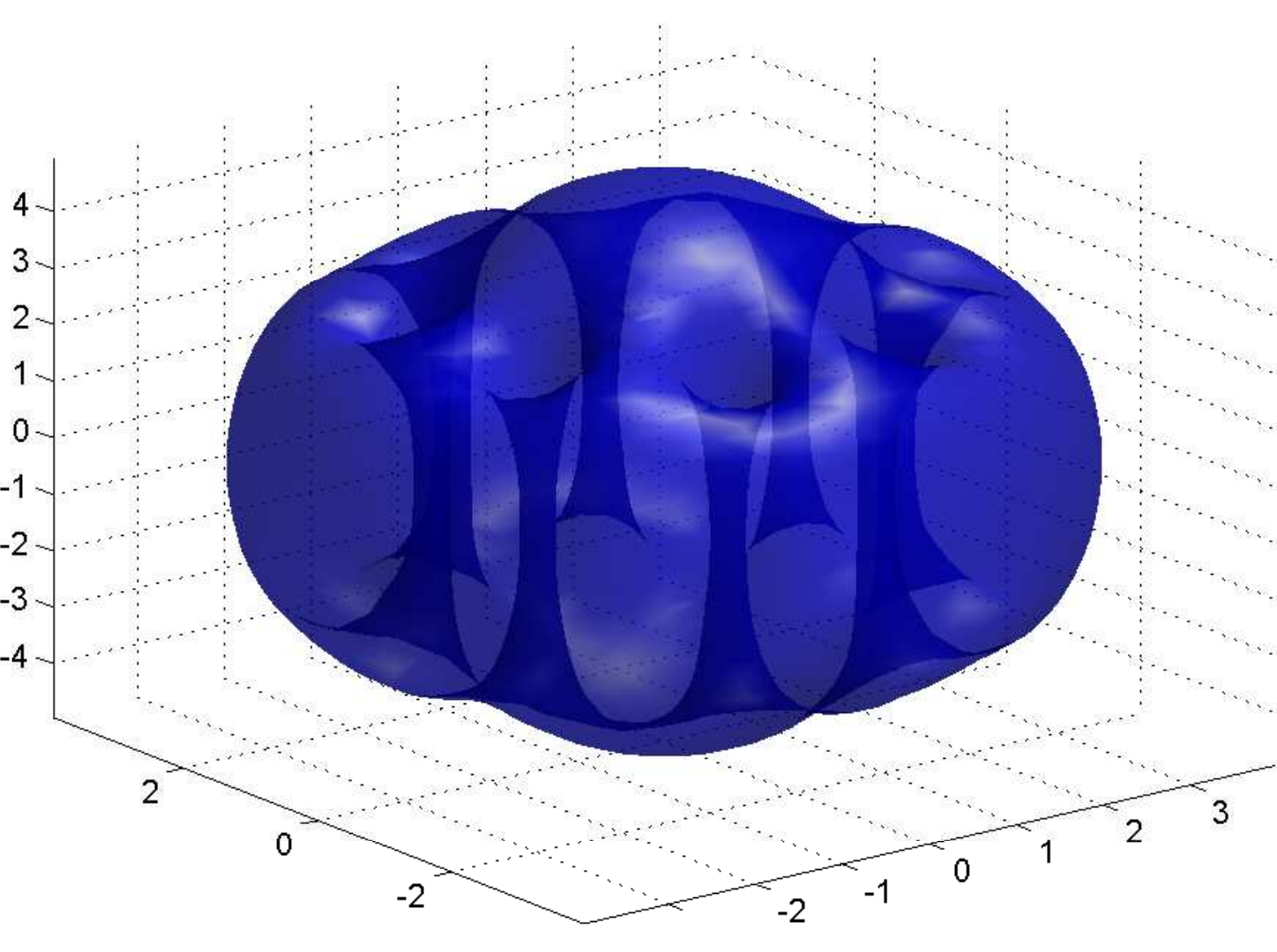}}
  \centerline{\small  (a) Side view image}
\end{minipage}
\hfill
\begin{minipage}{0.48\linewidth}
  \centerline{\includegraphics[height=5.4cm,width=0.53\textwidth]{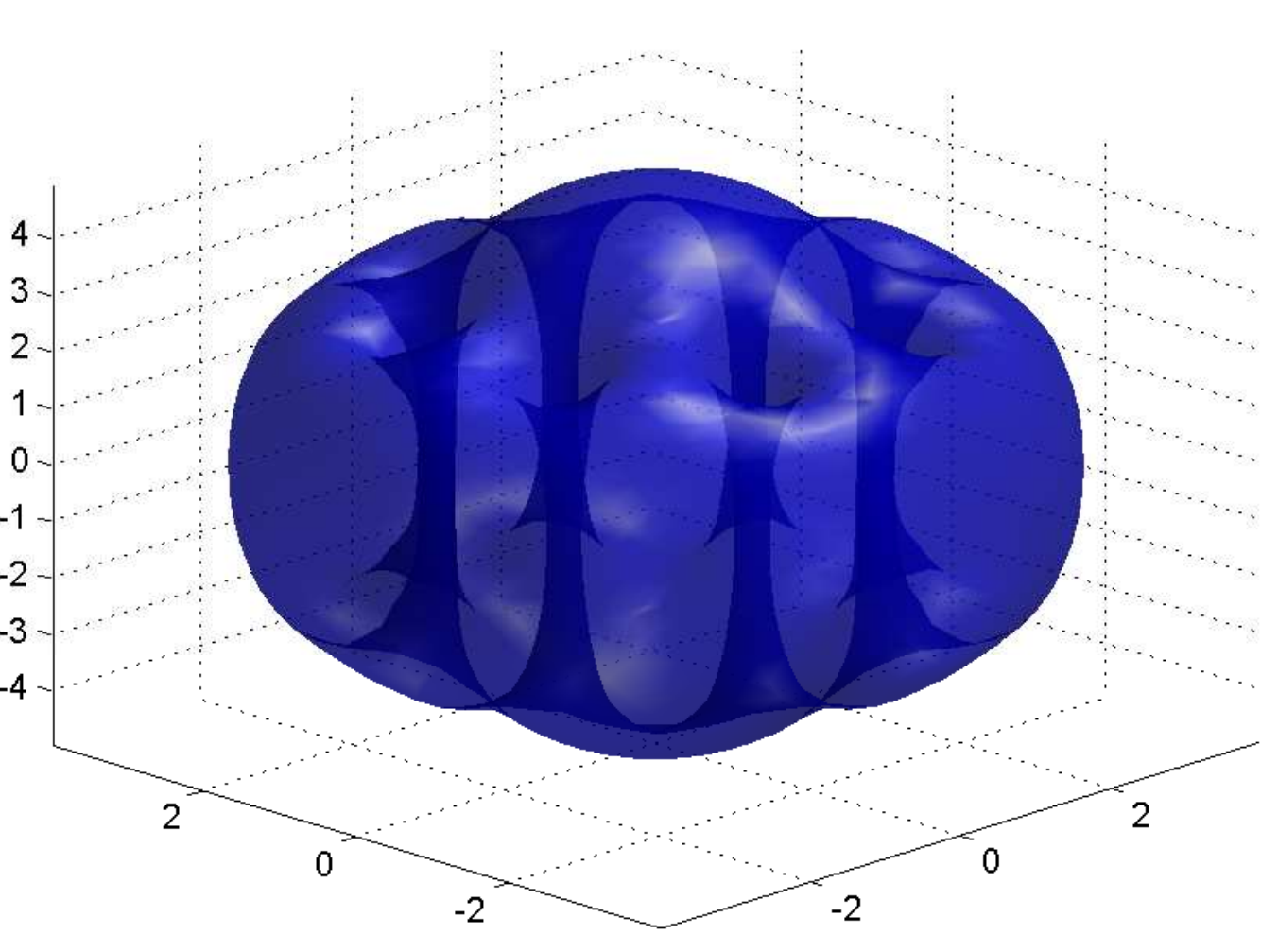}}
  \centerline{\small  (b) Side view image}
\end{minipage}
\vfill
\begin{minipage}{0.48\linewidth}
  \centerline{~~\includegraphics[height=4.0cm,width=0.69\textwidth]{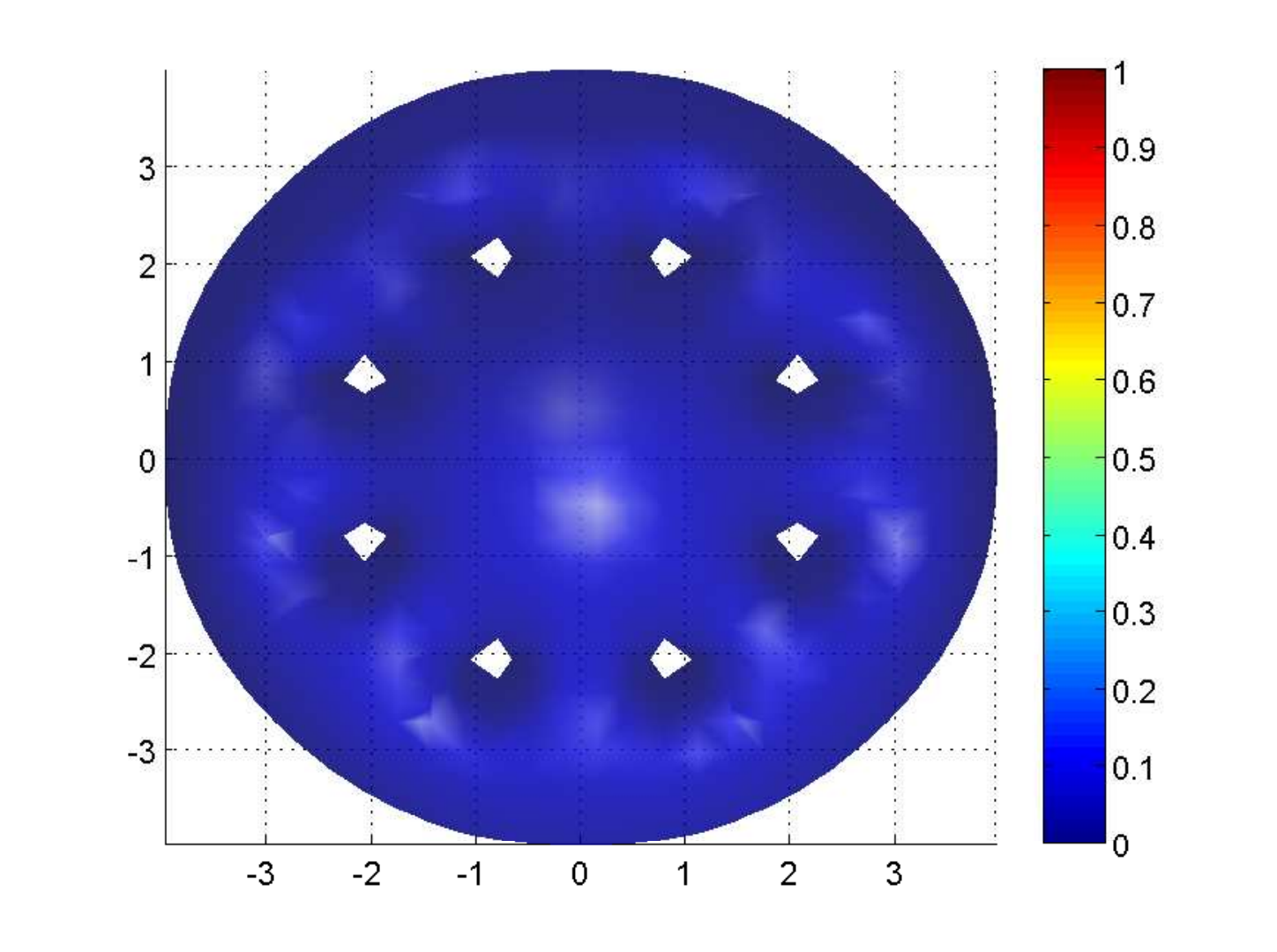}}
  \centerline{\small  (c) Vertical view image}
\end{minipage}
\hfill
\begin{minipage}{0.48\linewidth}
  \centerline{\includegraphics[height=4.45cm,width=0.76\textwidth]{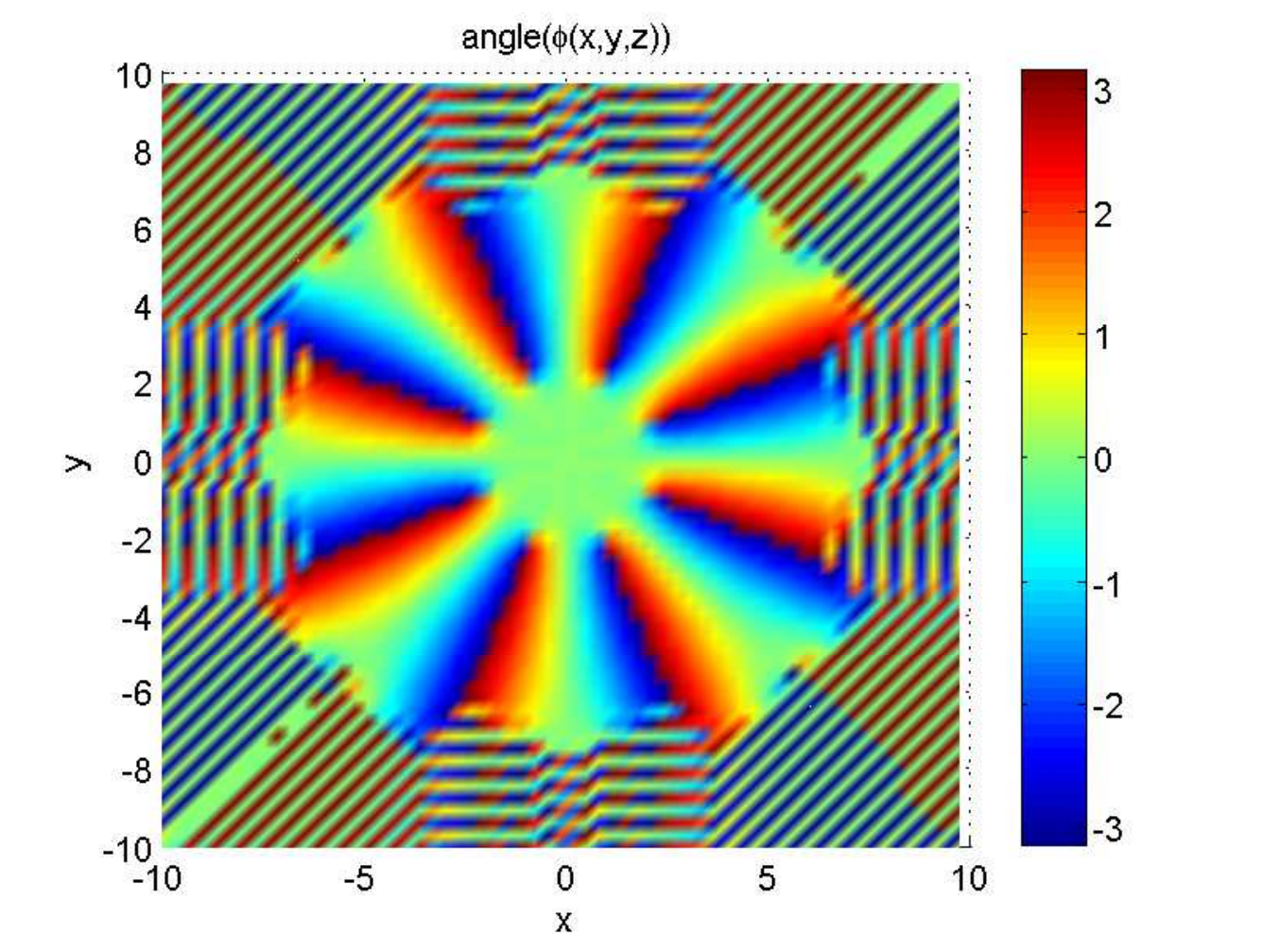}}
  \centerline{\small  (d) Slice in the $(x,y)$-plane of the phase}
\end{minipage}
\vspace{0.9mm}
  \setlength{\abovecaptionskip}{-0.1mm}
\caption{\small Condensate ground state in a 3D rotating BEC with $\beta=400$, and $\Omega=0.8$.}
\label{fig5.-31}
\end{figure}

\begin{figure}[H]
\centering
   ~~ ~~\includegraphics[height=5.6cm,width=0.26\linewidth]{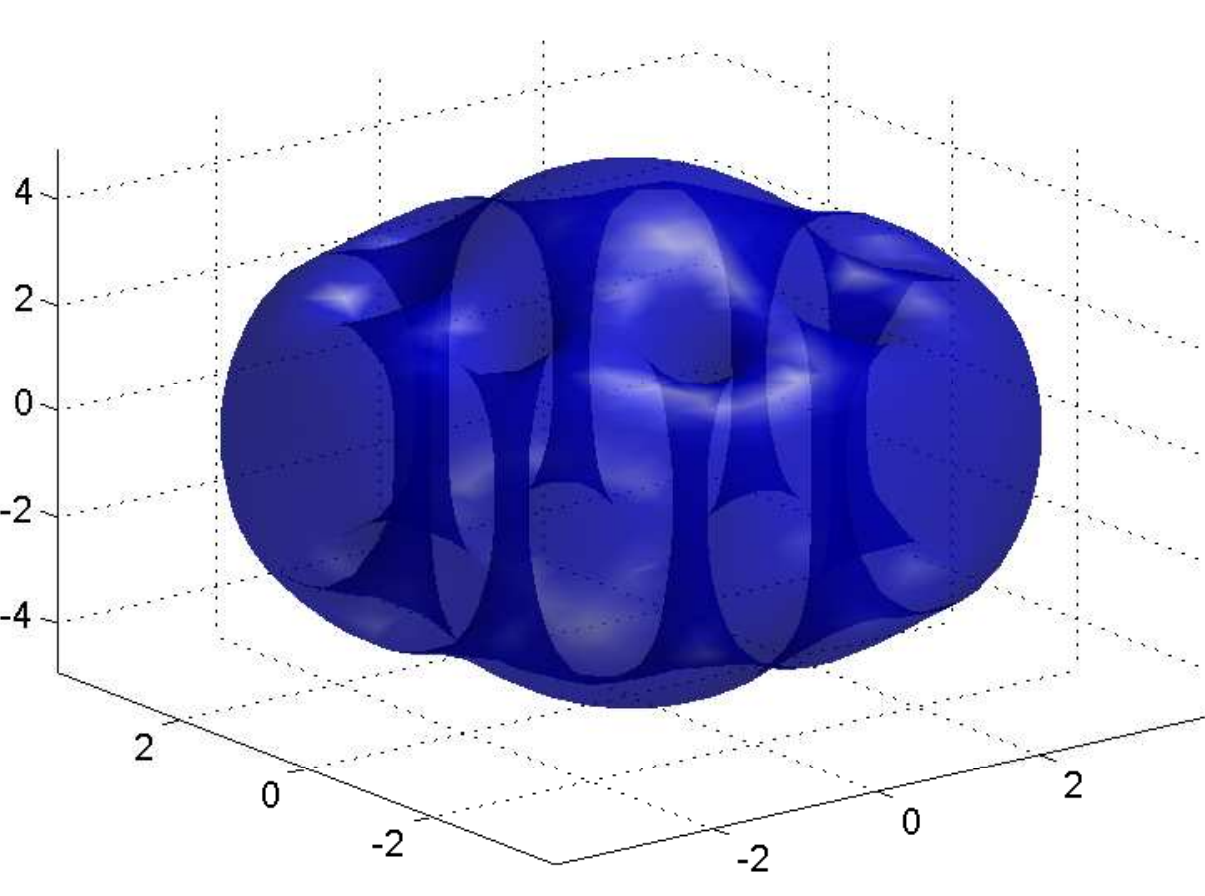}    
   ~ \includegraphics[height=5.5cm,width=0.30\linewidth]{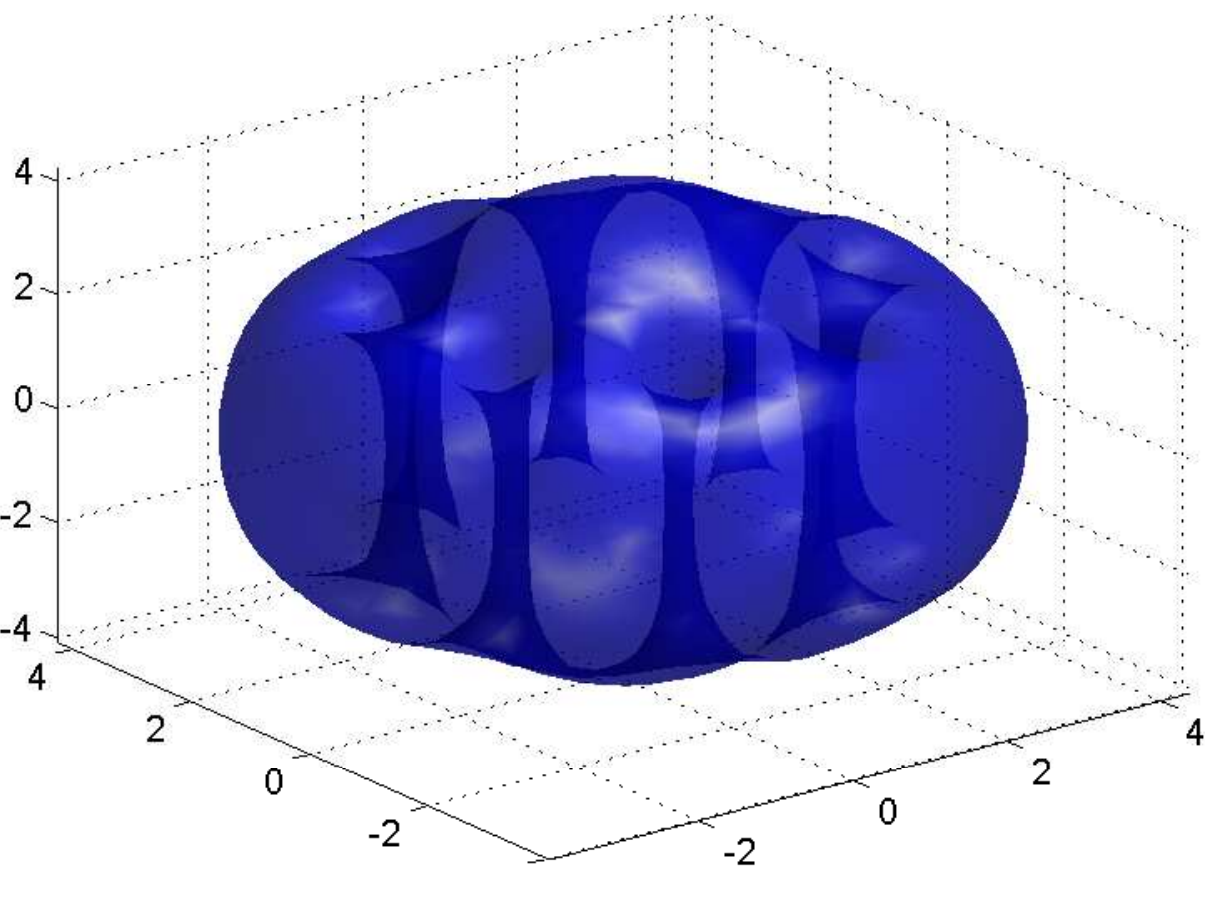}    
	\includegraphics[height=4.9cm,width=0.35\linewidth]{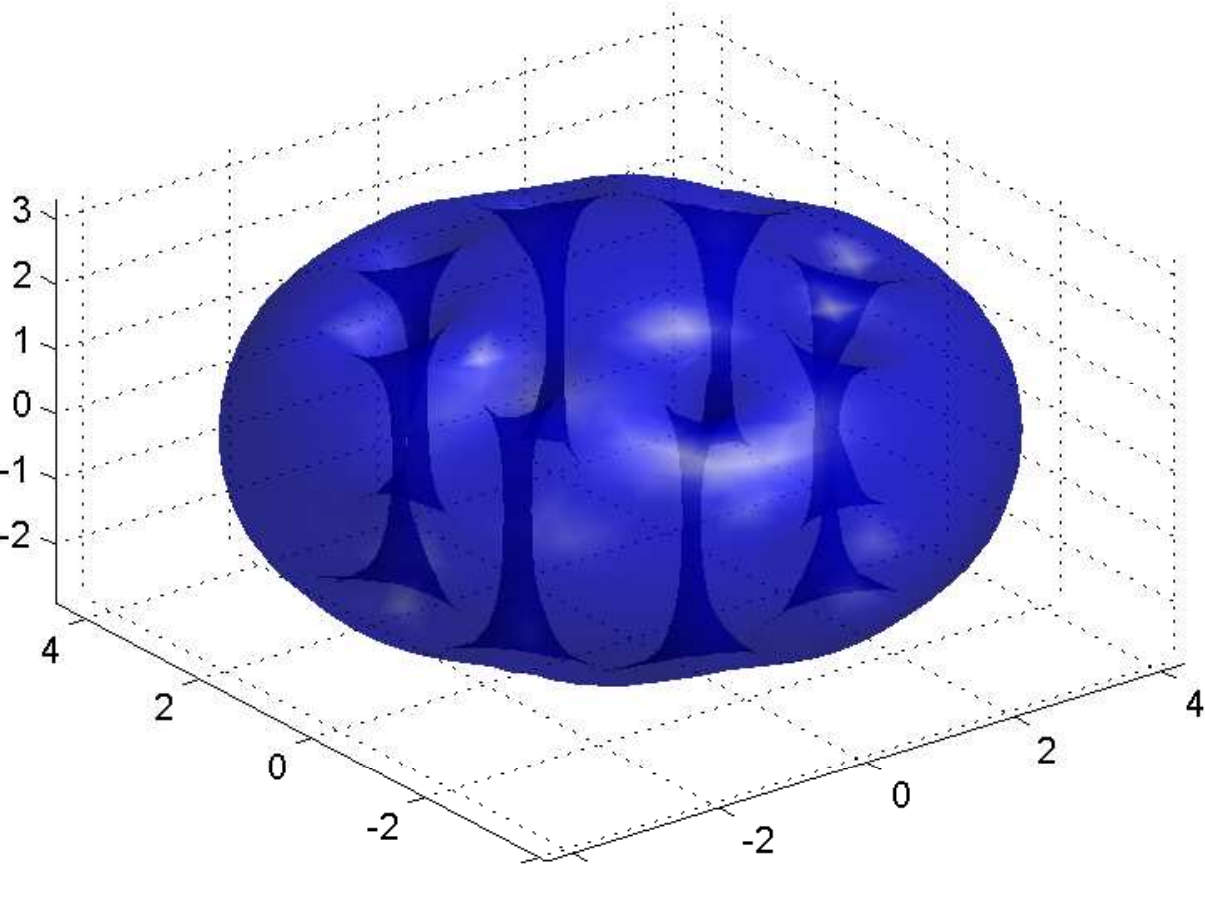}\\   
	\includegraphics[height=4.9cm,width=0.35\linewidth]{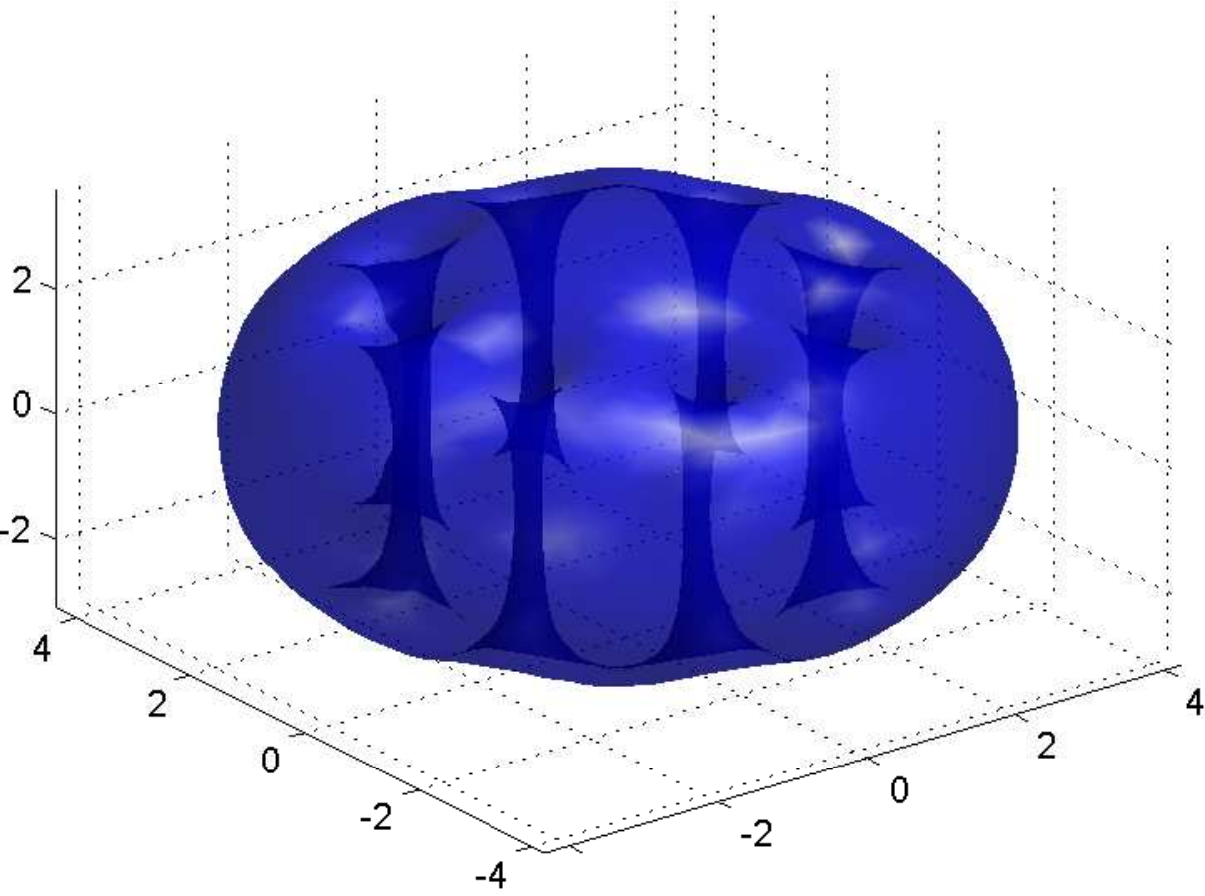}   
    \includegraphics[height=5.5cm,width=0.30\linewidth]{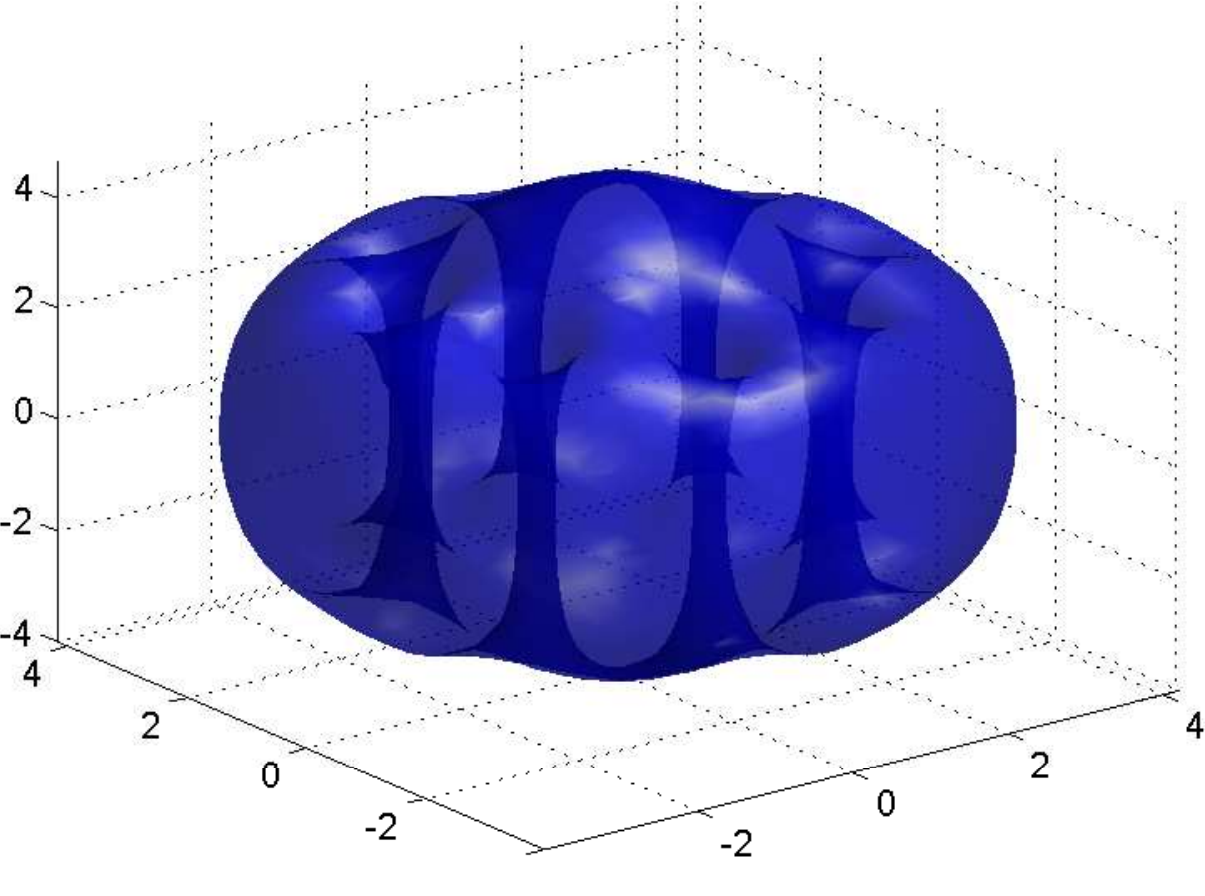}    
	~\includegraphics[height=5.6cm,width=0.27\linewidth]{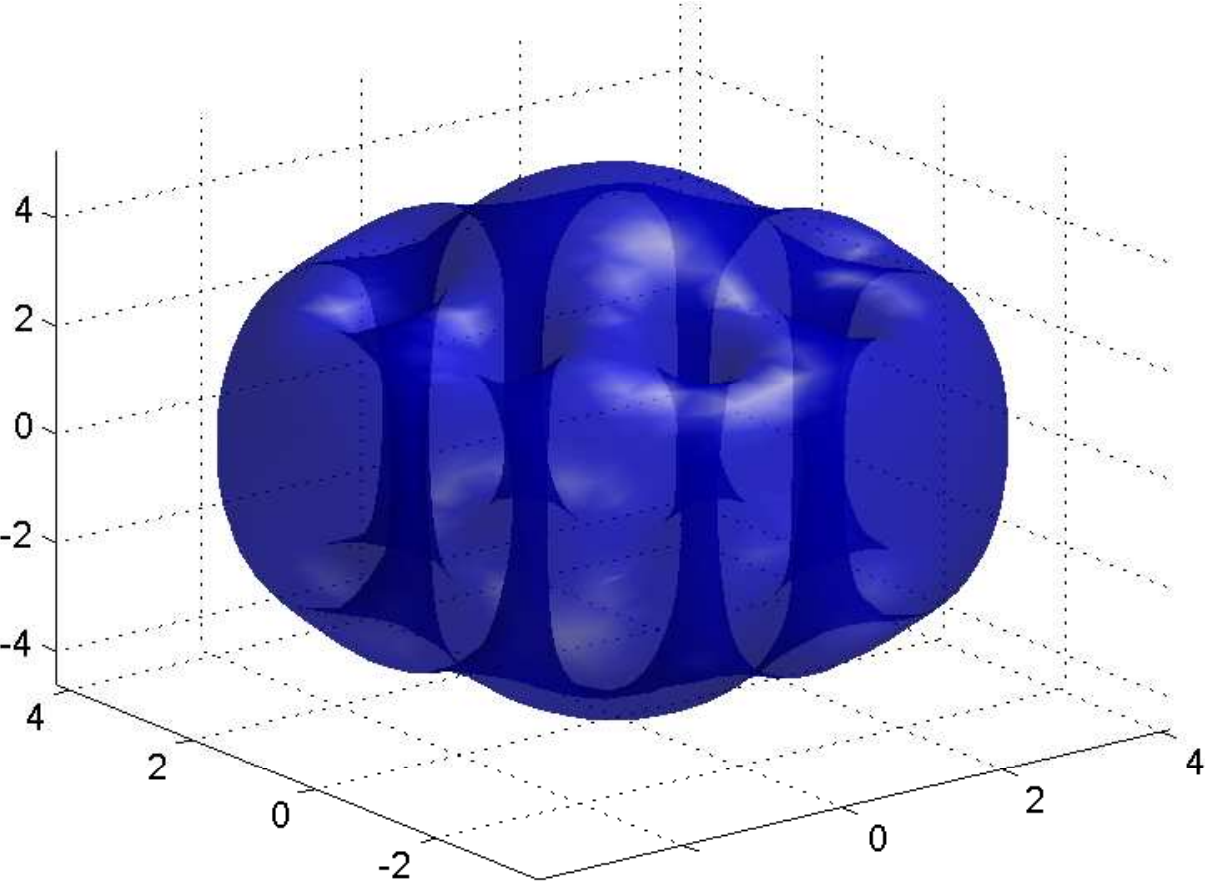}   
\setlength{\abovecaptionskip}{-0.1mm}
	\caption{\footnotesize Contour plots of the density function $|\psi|^2$ for the dynamics of vortex lines in a 3D rotating BEC with $\beta=400, \Omega=0.8$ at different times $t=1.0,1.3,2.7,3.3,4.2,5.2$ (in order from left to right and from top to bottom).}
	\label{fig5.-32}
\end{figure}

\vspace{-0.3cm}

\begin{figure}[H]
\centering
    \includegraphics[height=5.6cm,width=0.32\linewidth]{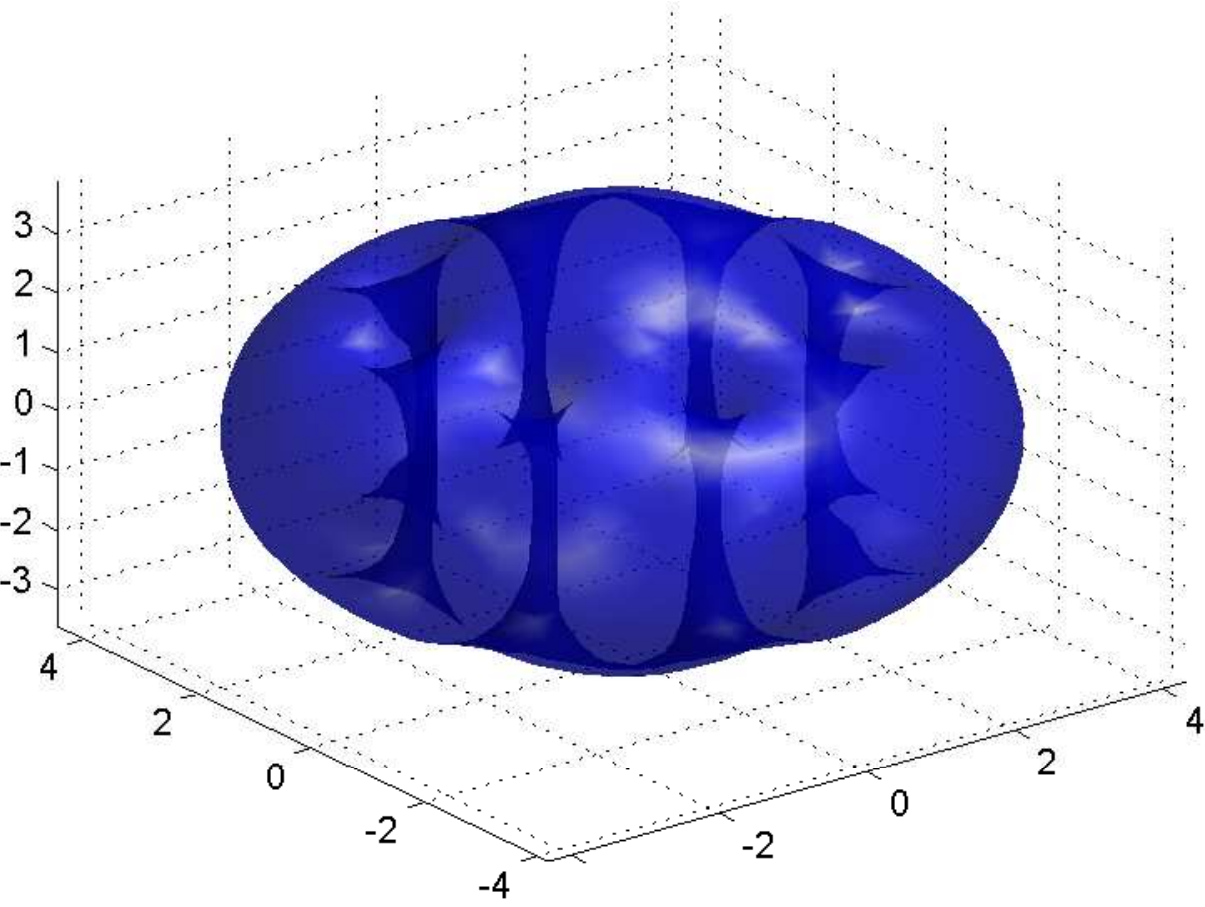}
    \includegraphics[height=5.5cm,width=0.32\linewidth]{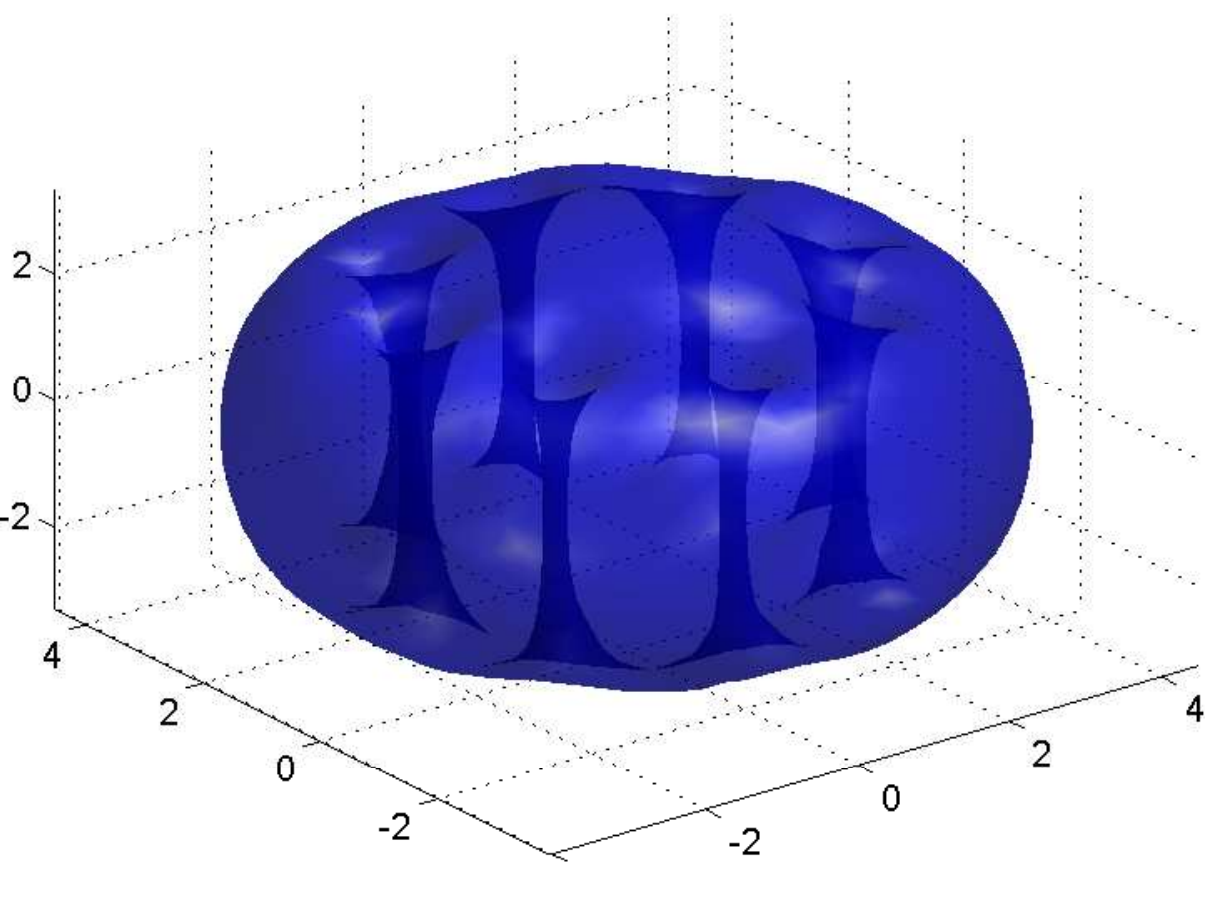}
	\includegraphics[height=5.7cm,width=0.3\linewidth]{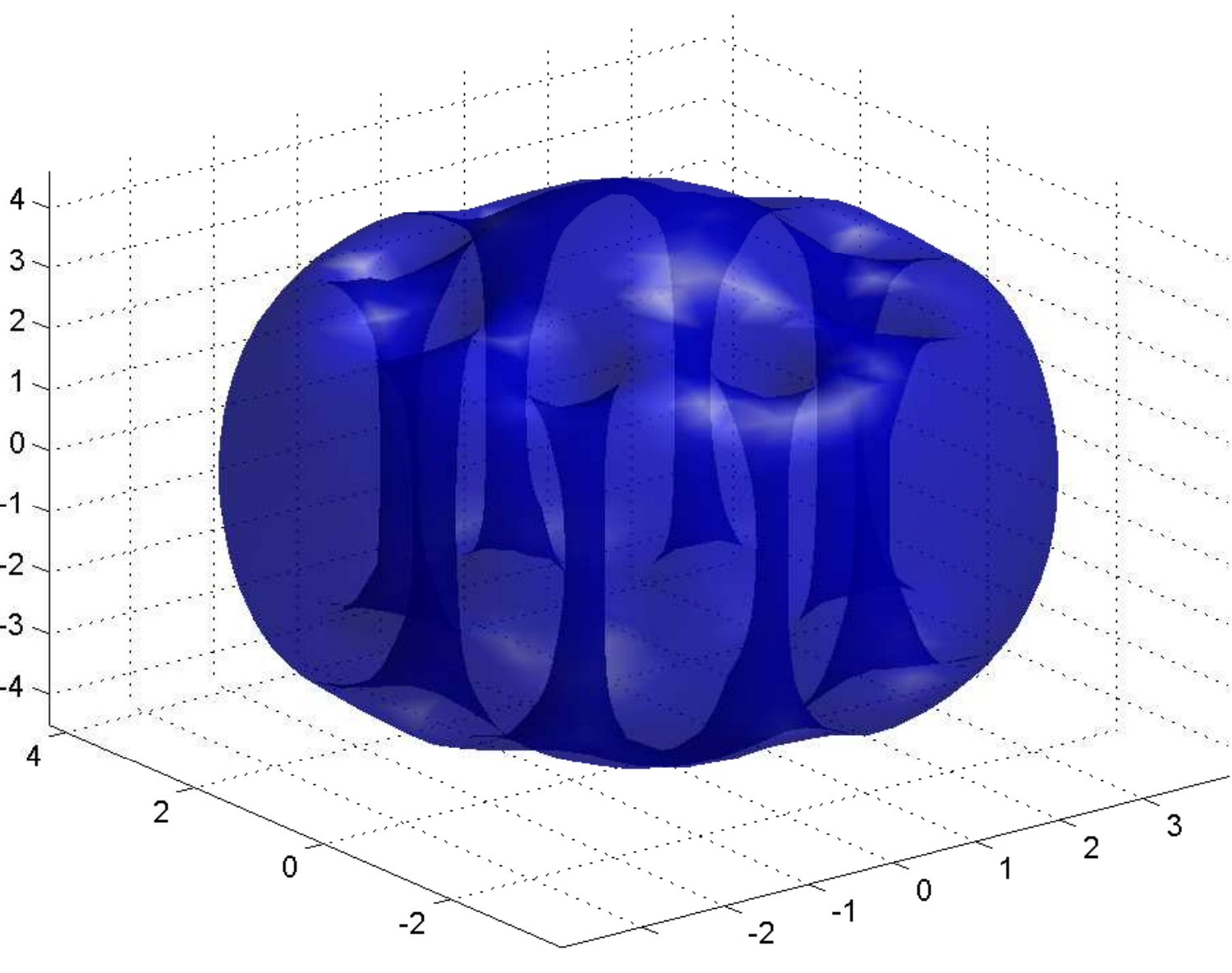} \\
	~~\includegraphics[height=5.2cm,width=0.32\linewidth]{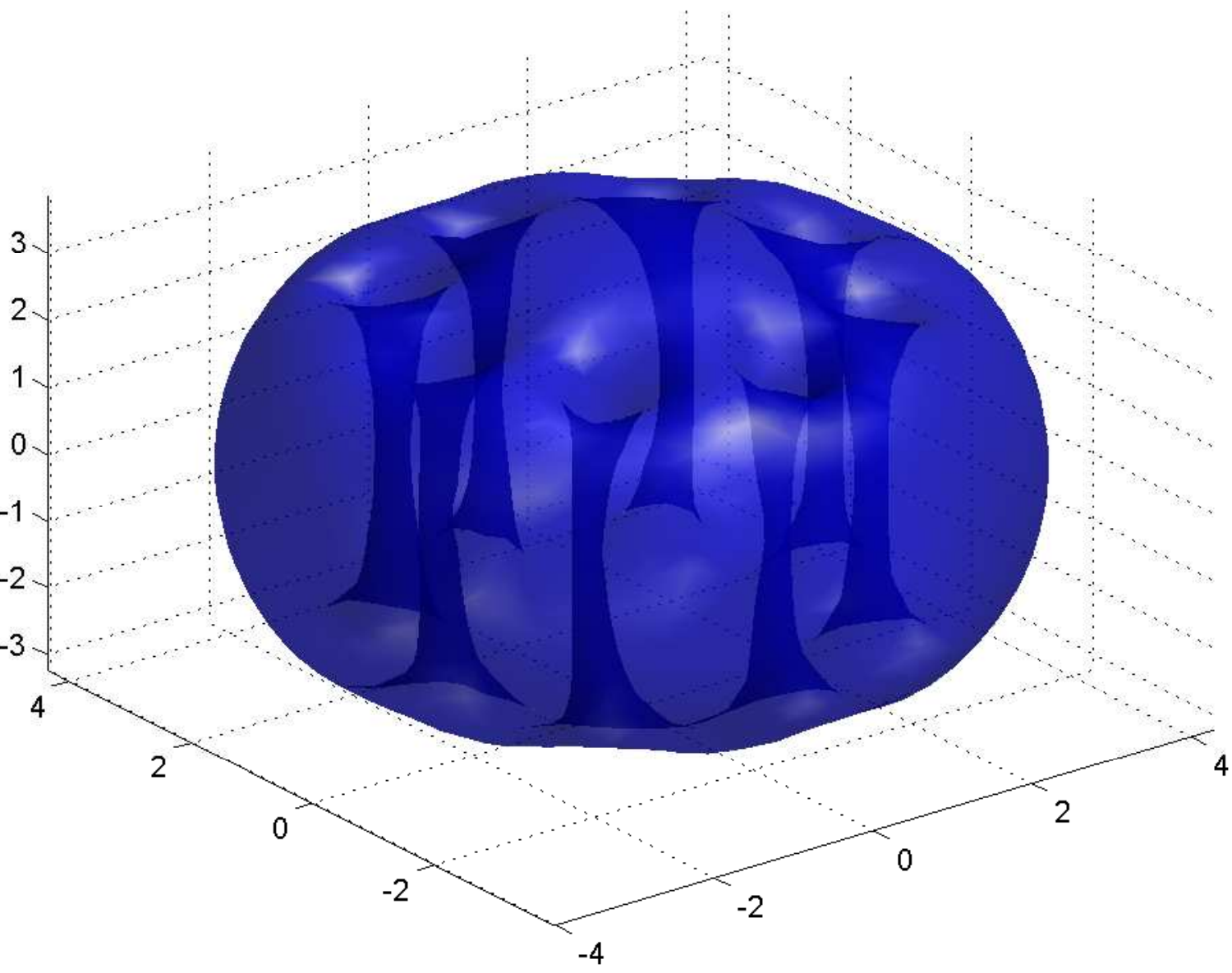}
    ~~\includegraphics[height=5.7cm,width=0.30\linewidth]{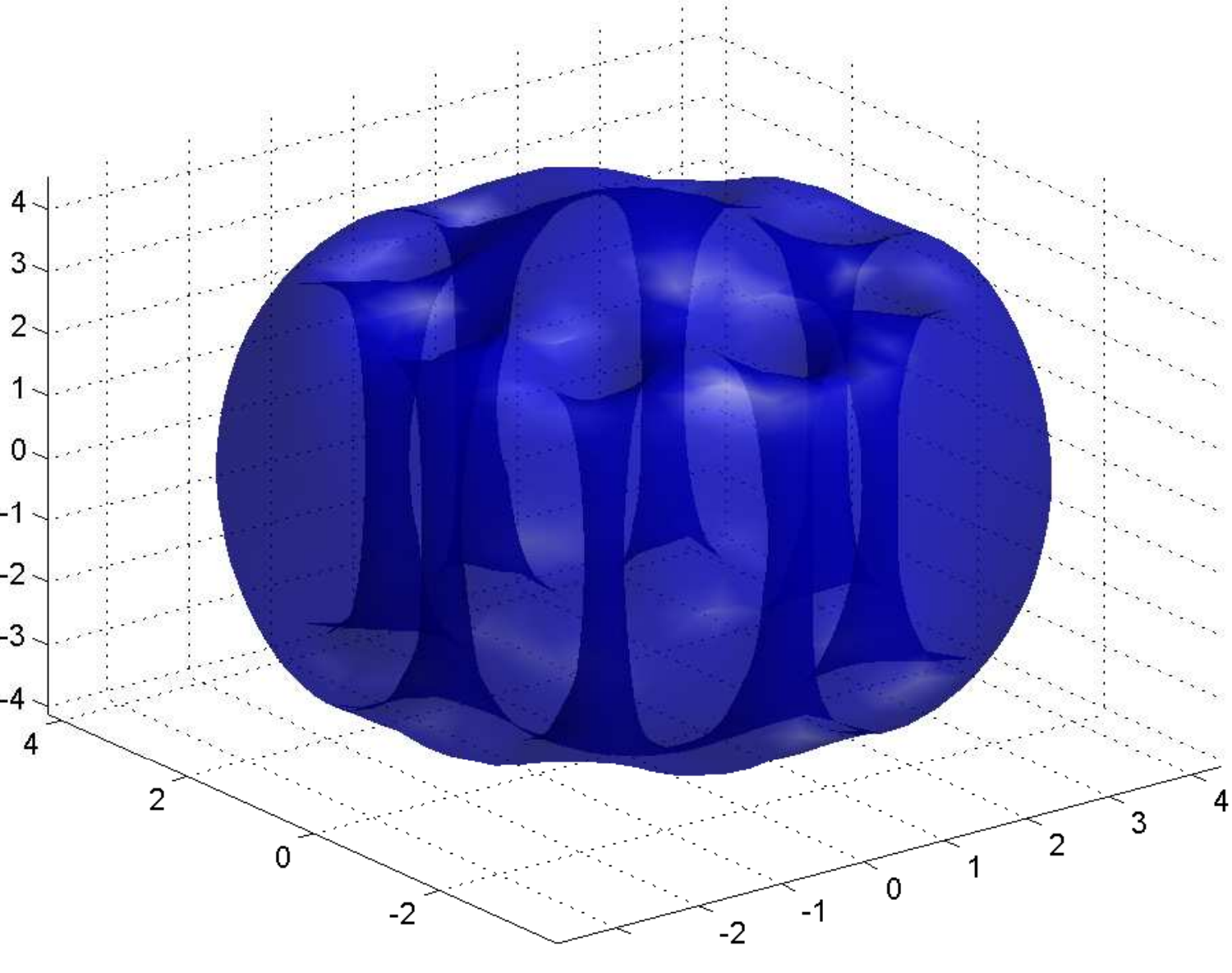}
	~\includegraphics[height=5.7cm,width=0.30\linewidth]{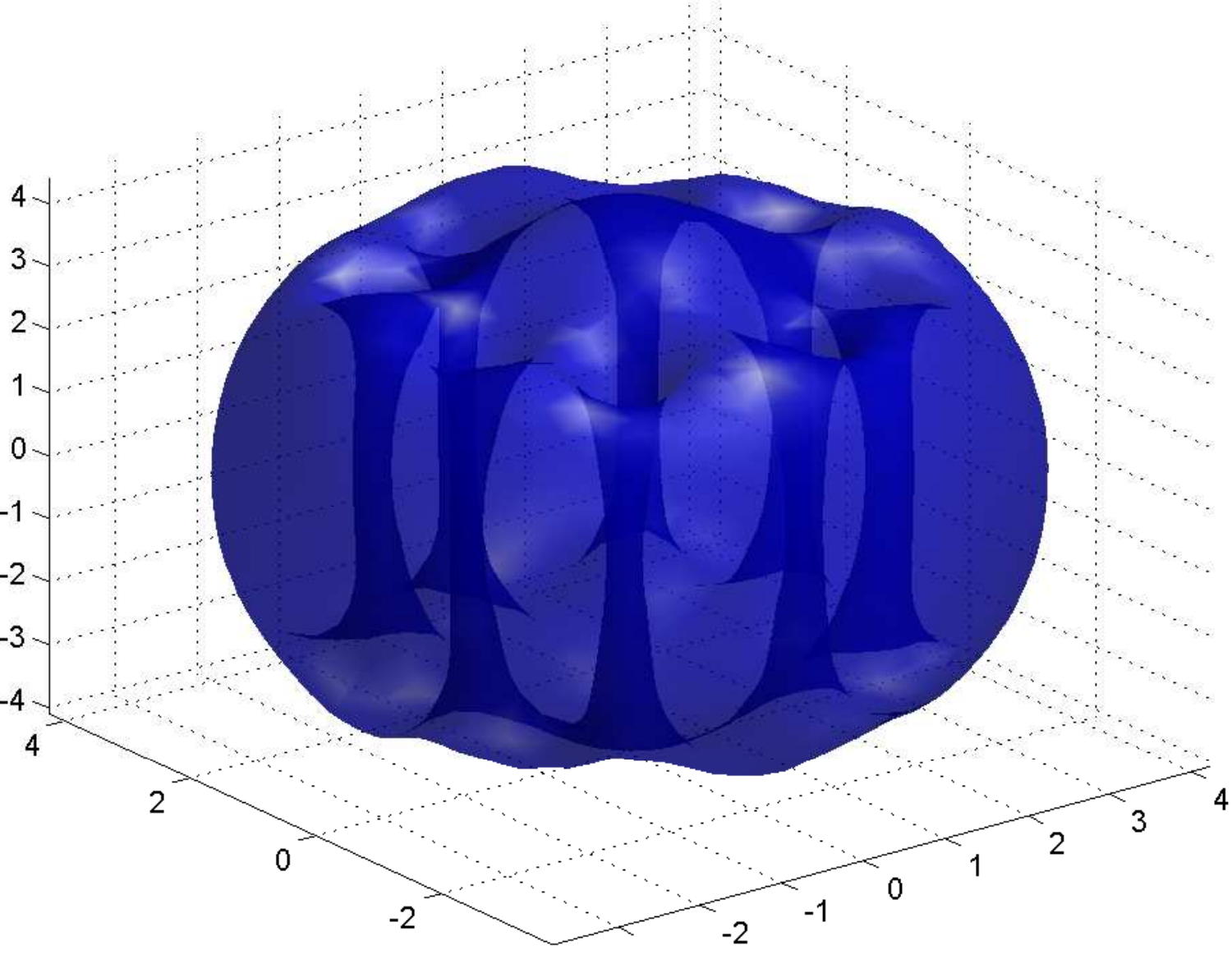}
\setlength{\abovecaptionskip}{-0.1mm}
	\caption{\footnotesize Contour plots of the density function $|\psi|^2$ for the dynamics of vortex lines in a 3D rotating BEC with $\beta=400, \Omega=0.8$ at different times $t=7.0,8.5,10,14,15,20$ (in order from left to right and from top to bottom).}
	\label{fig5.-33}
\end{figure}

\vspace{-0.3cm}

\begin{figure}[H]
\begin{minipage}{0.48\linewidth}
  \centerline{\includegraphics[height=5.5cm,width=0.88\textwidth]{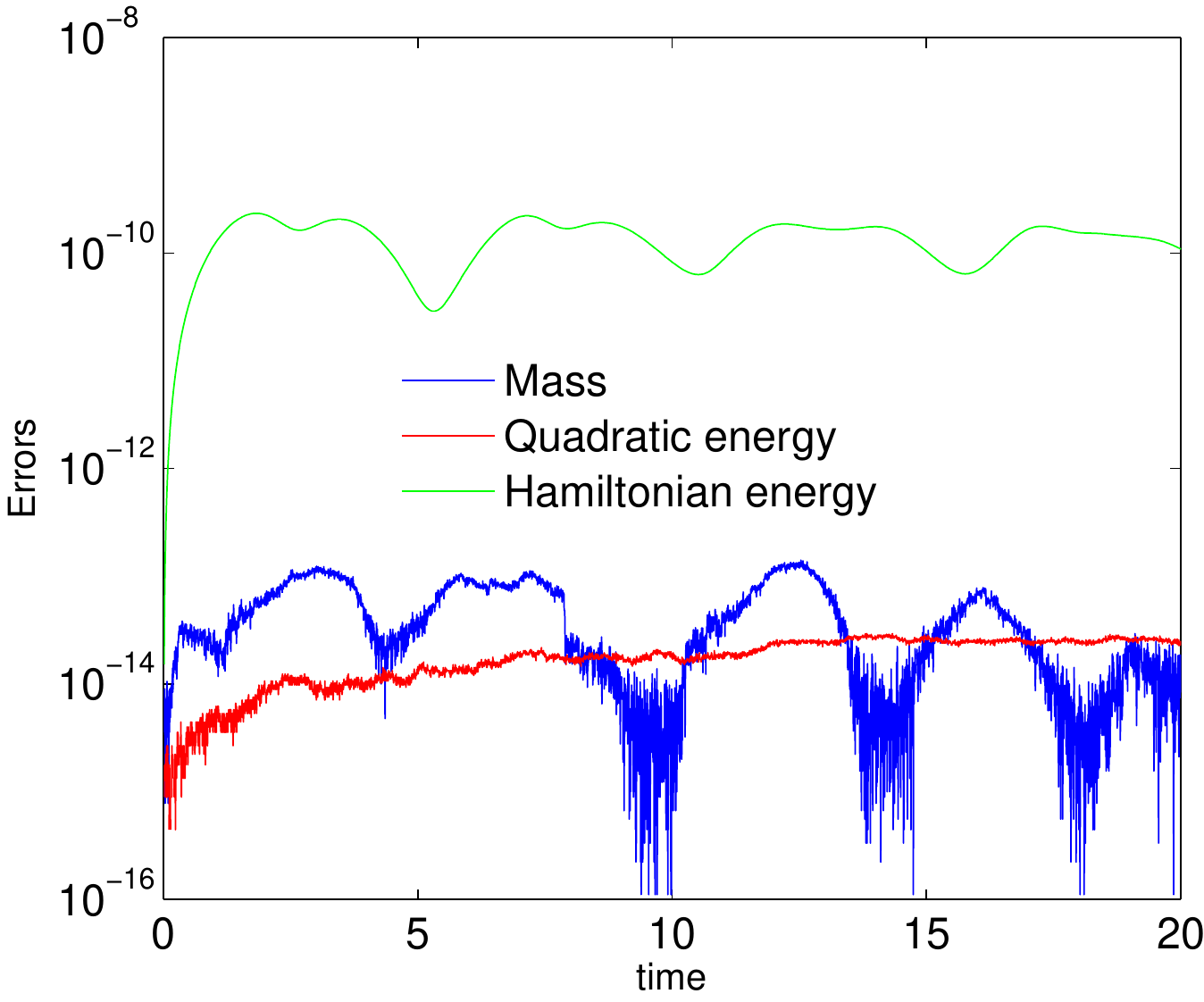}}
  \centerline{\footnotesize  (a) 4th-order HSAV}
\end{minipage}
\hfill
\begin{minipage}{0.48\linewidth}
  \centerline{\includegraphics[height=5.5cm,width=0.88\textwidth]{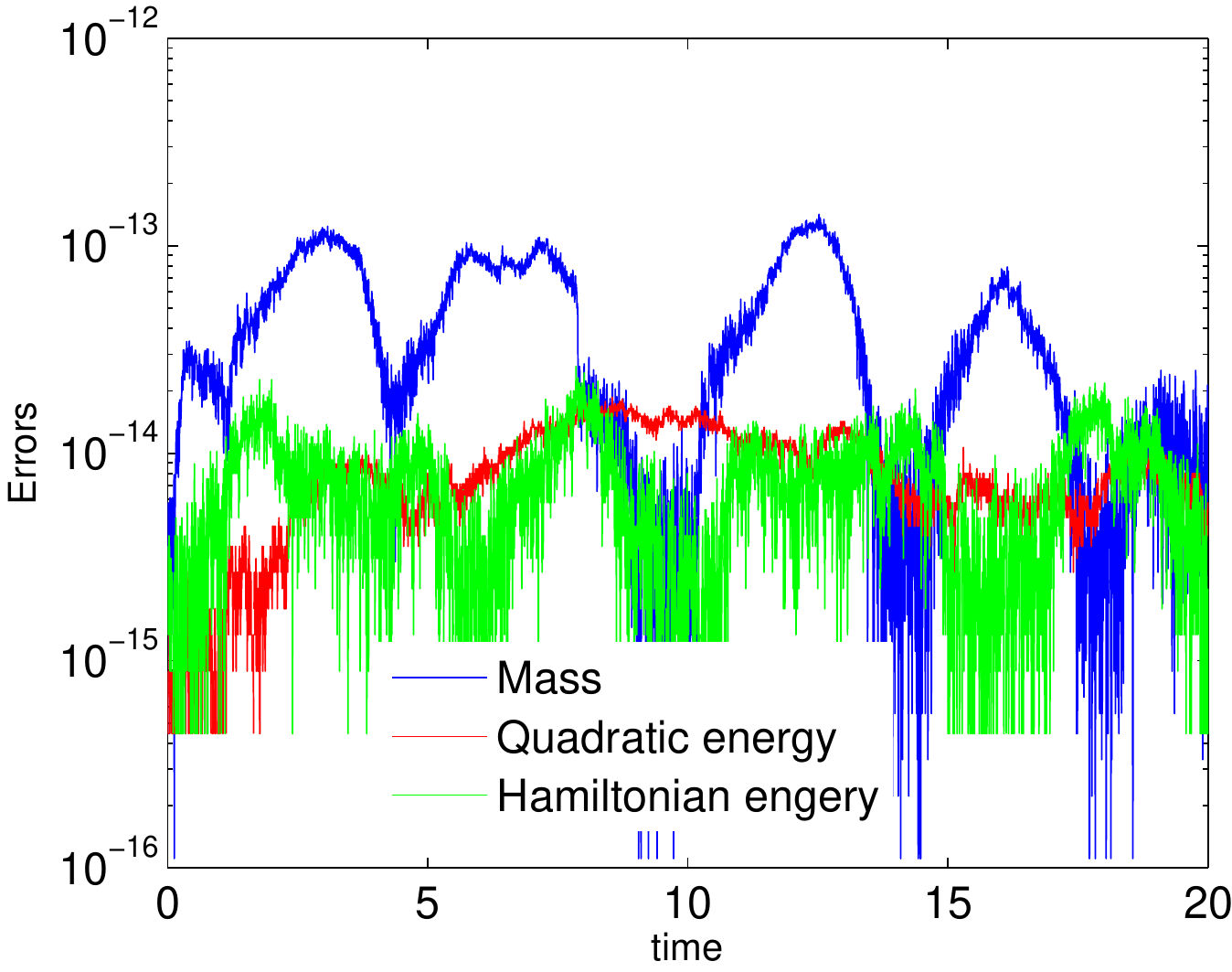}}
  \centerline{\footnotesize  (b) 6th-order HSAV}
\end{minipage}
\caption{\footnotesize Evolution of discrete mass and energy with $\tau=0.005, \beta=400$ and $\Omega=0.8$.}
\label{fig5.-34}
\end{figure}

\section{Conclusions}
\indent In this paper, we combine the SAV idea with the classcial structure-preserving discretization strategy to
develop a novel class of high-order methods for the rotational GP equation in three dimensions. The proposed schemes can reach arbitrarily high-order accuracy in time and preserve exactly both the discrete mass and modified energy of the reformulated system. Three numerical examples are addressed to illustrate the efficiency and accuracy of our new method. Compared with low-order structure-preserving schemes, the proposed schemes, which produce more accurate numerical solutions, are more suitable for longtime dynamic simulations with larger time steps. Last but not least, as far as we know, there are some works (e.g., see \cite{BC13b,baoc,cuijincpc,wangg19}) on optimal error estimates of second-order energy-preserving schemes for the GP equation \eqref{eq1.1}, but the error estimate of high-order ones are still not available. Thus, how to establish optimal error estimates for high-order energy-preserving schemes will be an interesting topic for future studies.

\section*{Acknowledgments}
\indent Jin Cui's work is supported by Natural Research Fund of Nanjing Vocational College of Information Technology (Grant No. YK20160901). Chaolong Jiang's work is partially supported by the National Natural Science Foundation of China (Grant No. 11901513), the Yunnan Provincial Department of Education Science Research Fund Project (Grant No. 2019J0956) and the Science and Technology Innovation Team on Applied Mathematics in Universities of Yunnan. Yushun Wang's work is partially supported by the National Natural Science Foundation of China (Grant No. 11771213) and the National Key Research and Development Project of China (Grant Nos. 2018YFC0603500, 2018YFC1504205). The authors are in particular grateful to Dr. Yuezheng Gong for fruitful discussions on the fast solver presented in Section 5.


%

%

%
%
%

%

%
%
%
%

%

%
\end{document}